\documentclass[11pt]{amsart}
\usepackage{amssymb}
\usepackage{amsfonts}
\usepackage{amsmath}
\usepackage{graphicx}
\usepackage{xcolor}
\usepackage{amsmath}
\usepackage{mathrsfs}
\usepackage{stmaryrd}
\usepackage{epsfig,color}
\usepackage{blindtext}
\usepackage{enumerate}
\usepackage{hyperref}
\usepackage{url}
\usepackage{bbm}
\usepackage{nicefrac,mathtools}
\usepackage{bm}   
\usepackage{tabularx}
\DeclareGraphicsExtensions{.pdf,.jpeg,.png}
\usepackage{epstopdf}
\usepackage{cancel} 
\usepackage[normalem]{ulem} 
\usepackage{verbatim} 
\usepackage{scalerel}
\usepackage{enumitem}

\usepackage{cases}

\usepackage{subcaption}
\usepackage{soul}

\usepackage{color}
\usepackage[msc-links, lite]{amsrefs}
\usepackage{geometry}
\usepackage{stackengine,scalerel}

\newtheorem{theorem}{Theorem}[section]

\newtheorem{proposition}[theorem]{Proposition}
\newtheorem{lemma}[theorem]{Lemma}
\newtheorem{corollary}[theorem]{Corollary}
\newtheorem{question}[theorem]{Question}

\newtheorem{claim}[]{Claim}
\newtheorem*{acknowledgements}{Acknowledgements}
\theoremstyle{definition}
\newtheorem{definition}[theorem]{Definition}
\theoremstyle{remark}
\newtheorem{remark}[theorem]{Remark}

\numberwithin{equation}{section}

\newcommand{\mf}{\mathbf}
\newcommand{\mb}{\mathbb}
\newcommand{\mc}{\mathcal}

\newcommand{\mk}{\mathfrak}

\newcommand{\wti}{\widetilde}

\newcommand{\Si}{\Sigma}

\newcommand{\codim}{\operatorname{codim}}

\newcommand{\bd}{\partial}

\newcommand{\rom}[1]{\expandafter\romannumeral #1}
\newcommand{\Rom}[1]{\uppercase\expandafter{\romannumeral #1}}

\DeclareMathOperator{\genus}{genus}
\DeclareMathOperator{\Ric}{Ric}

\DeclareMathOperator{\spt}{spt}
\DeclareMathOperator{\interior}{int}
\DeclareMathOperator{\closure}{Clos}

\title{Embedded minimal $S^1$-bundles in $\mathbb{S}^4$}

\author{Tongrui Wang}
\address{School of Mathematical Sciences, Shanghai Jiao Tong University, 800 Dongchuan RD, Minhang District, Shanghai, 200240, China}
\email{wangtongrui@sjtu.edu.cn}

\begin{document}

\begin{abstract}
	We construct infinitely many embedded minimal hypersurfaces of pairwise distinct irreducible topological types in the unit $4$-sphere $\mathbb{S}^4$, which provide a new answer to a problem of Hsiang. 
    These examples are topologically principal $S^1$-bundles and Seifert fibered manifolds over closed orientable surfaces.  
	In particular, for any closed orientable surface $\Sigma_{2k-1}$ of odd genus $2k-1$, we show that $S^1\times \Sigma_{2k-1}$ admits a minimal embedding into $\mb S^4$. 
	The construction is based on the equivariant min-max theory and the suspended (weighted) Hopf action on $\mb S^4$. 
\end{abstract}

\maketitle

\section{Introduction}

Closed minimal hypersurfaces in the unit sphere $\mb S^n$ form one of the central subjects in geometry. 
Many fundamental questions and developments in the field concern their existence, rigidity, and analytic properties. 
In the present paper, we focus on a natural but still largely open topological realization problem: which closed $3$-manifolds can be realized as embedded minimal hypersurfaces in the round unit $4$-sphere $\mb S^4$?

Even in the $3$-dimensional unit sphere $\mb S^3$, constructing embedded minimal surfaces with different topological types is a highly nontrivial problem. 
For a long time, the only known closed embedded minimal surfaces in $\mb S^3$ were essentially limited to the totally geodesic $2$-spheres and the Clifford torus. 
A decisive breakthrough came in the late 1960s when Lawson \cite{lawson1970S3} discovered an infinite family of embedded orientable minimal surfaces in $\mb S^3$ with arbitrary genus. 
Since then, minimal surfaces in $\mb S^3$ have been studied by a wide range of methods, including solving
Plateau problems \cite{karcher1988new}\cite{choe2016new}, gluing and desingularization \cite{kapouleas2010doublingT2, kapouleas2017doublingS2-1, kapouleas2019doublingS2, wiygul2020stacking, kapouleas2022desingularizing}, eigenvalue optimization \cite{karpukhin2024minimal}, and min-max theory \cite{ketover2020catenoid}\cite{ketover2016equivariant}. 
In particular, Lawson’s celebrated result \cite{lawson1970S3} shows that there is no topological obstruction to the minimal realization of orientable closed surfaces  in $\mb S^3$.

However, the situation in higher-dimensional round spheres $\mb S^{n\geq 4}$ is quite different. 
Despite the abundance of pairwise non-congruent embedded minimal hypersurfaces, the list of known topological types remains limited. 
In 1993, Hsiang \cite{hsiang1993closed} proposed the following question. 
\begin{question}[Hsiang, 1993]
    How to construct embedded closed minimal hypersurfaces in $\mb S^4$ which are of topological types different from $S^3$, $S^1\times S^2$, $T^3$, and $SO(3)/\mb Z_2^2$? 
\end{question}

Although several powerful construction methods have been developed (as will be reviewed below), Hsiang's question reflects the scarcity of known topological types for minimal hypersurfaces in $\mathbb S^4$. 
From the viewpoint of the prime decomposition of $3$-manifolds, the four topological types listed in Hsiang's question, i.e. 
\[
 S^3,\qquad  S^1\times  S^2,\qquad  T^3,\qquad SO(3)/\mathbb Z_2^2,
\]
exhaust, to the best of our knowledge, all the prime $3$-manifold topologies that have so far been realized by known closed embedded minimal hypersurfaces in $\mb S^4$. 

Recently, some important progress has also been made in realizing certain reducible $3$-manifolds as minimal hypersurfaces in $\mb S^4$. 
By doubling the equator $\mb S^3\subset \mb S^4$ and gluing necks, Kapouleas and Zou \cite{kapouleas2024} constructed minimal hypersurfaces in $\mb S^4$ that are diffeomorphic to $\#_{m^2-1}S^1\times S^2$ for all $m\in \mb N$ sufficiently large. 
In \cite{pitts1987applications}, Pitts and Rubinstein outlined an equivariant min-max construction for minimal hypersurfaces of topology $\#_{2i}(\mathbb S^1\times \mathbb S^2)$ in $\mathbb S^4$ for some large $i\in \mb N$, which was recently carried out by Ko \cite{ko2025} under suitable equivariant hypotheses. 
These results provide an infinite family of topologically distinct examples, while their topology remains in the connected-sum class generated by the prime factor $ S^1\times  S^2$.

\subsection{Main results}
In this paper, we first show two infinite families of closed embedded minimal hypersurfaces in $\mb S^4$ with pairwise distinct irreducible (and thus prime) topological types.



\begin{theorem}\label{Main thm: new minimal hypersurfaces in S4}
    For any integer $n\in \{2,3,\dots\}$, there is an embedded minimal hypersurface $\Gamma_{n}$ in $\mb S^4$ that is a principal $S^1$-bundle over a closed orientable surface $\Gamma_n/S^1$. 
    Moreover, 
    \begin{itemize}
        \item[(i)] if $n=2k$ is even, then the principal $S^1$-bundle $\Gamma_n\to\Gamma_n/S^1$ is trivial, and $\genus(\Gamma_n/S^1)=n-1=2k-1$; 
        \item[(ii)] if $n=2k+1$ is odd and sufficiently large, then the principal $S^1$-bundle $\Gamma_n\to\Gamma_n/S^1$ is non-trivial, its Euler class $e(\Gamma_n)\in H^2(\Gamma_n/S^1;\mb Z)$ is a generator, and $\genus(\Gamma_n/S^1)=2n-2=4k$. 
    \end{itemize}
\end{theorem}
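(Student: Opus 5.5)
The plan is to combine equivariant min-max with the suspended Hopf action on $\mb S^4\subset \mb C^2\times\mb R$. Let $S^1$ act on $\mb S^4$ by $e^{i\theta}\cdot(z_1,z_2,t)=(e^{i\theta}z_1,e^{i\theta}z_2,t)$, the suspension of the Hopf action on the equator $\mb S^3$. This action is free away from the two poles $(0,0,\pm1)$. The orbit space $\mb S^4/S^1$ is then the suspension of $\mb S^2=\mathbb{CP}^1$, i.e.\ a $3$-sphere. It carries the induced orbifold-type metric weighted by the orbit length, and the two cone points are the images of the poles.

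An $S^1$-invariant hypersurface $\Gamma$ avoiding the poles is exactly a principal $S^1$-bundle over the surface $\Gamma/S^1$. Moreover, $\Gamma$ is minimal in $\mb S^4$ if and only if $\Gamma/S^1$ is minimal for the weighted area functional $\int_{\Gamma/S^1}|{\rm orbit}|\,dA$ in the quotient. To produce many topologies, I will also impose a finite symmetry group $G_n$ that commutes with the $S^1$-action. Natural choices are rotations $(z_1,z_2,t)\mapsto(z_1,e^{2\pi i/n}z_2,t)$ combined with reflections, for instance $t\mapsto -t$ or swapping $z_1,z_2$. In the quotient $\mb S^3$ these act as a dihedral-type group. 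The data generating this group play the role of Lawson's geodesic polygons or Ketover's equivariant sweepouts in $\mb S^3$.

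Steps in order:
\begin{enumerate}
\item Build a $(G_n\times S^1)$-equivariant one-parameter sweepout of $\mb S^4$. Its slices are, in the quotient, genus-$g$ surfaces obtained from a pair of parallel spheres around the cone points joined by $n$ symmetric tubes; this is analogous to Ketover's sweepouts giving Lawson surfaces $\xi_{n-1,1}$. The target genus is $g=n-1$ in case (i) and $g=2n-2$ in case (ii).
\item Show that the equivariant width exceeds the area of the totally geodesic equator and of the Clifford-type invariant tori. The tool is a catenoid-estimate argument: the sweepout can be made to have maximal area strictly below twice the equator area, while the equivariant isoperimetric constraint gives the lower bound.
\item Apply the equivariant min-max theory. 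This yields a smooth embedded $G_n\times S^1$-invariant minimal hypersurface $\Gamma_n$ with genus control, i.e.\ an equivariant genus bound in the spirit of Ketover. I must also ensure $\Gamma_n$ avoids the poles: an invariant hypersurface through a pole would contain a full $\mb S^3$-neighborhood structure, which is excluded by the symmetry and the width bound.
\item Identify the topology. The quotient genus follows from the genus bound together with the rule-out of degenerations into equator, Clifford torus, or multiplicity-two limits. The Euler class is computed from how $\Gamma_n/S^1$ separates the two cone points.
\end{enumerate}

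For the Euler class, note that on each small linking sphere around a pole the Hopf bundle has Euler number $\pm1$. If $\Gamma_n$ bounds a region containing both poles in a symmetric way (the even case, reflection $t\mapsto-t$ swapping sides), the contributions cancel and the bundle is trivial. If it separates the poles (the odd case), the Euler number is $\pm1$, so the Euler class is a generator.

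I expect the main obstacle to be the odd case (ii). There the symmetry group is smaller, because the reflection swapping the poles is no longer compatible. Ruling out that min-max converges to lower-genus or degenerate limits, such as the equator with multiplicity, then requires a quantitative width estimate. This is presumably why the statement needs $n$ sufficiently large. I would first try to show the width tends to $2|\mb S^3|$ from below while every competing invariant minimal hypersurface of lower genus has area bounded away from this value, comparing with the Kapouleas--Zou doubling areas.
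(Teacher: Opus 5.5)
Your setup for (i) is the paper's: two spheres joined by $n$ necks, multiplicity one from width $<2\mc H^3(E_0)$, and a trivial bundle because both cone points lie on one side of $\Gamma_n/S^1$. But nothing in your plan bounds the genus from below. Equivariant min-max only gives $\genus(\Gamma_n/S^1)\le n-1$ (resp.\ $\le 2n-2$). Excluding degenerations to the equator, Clifford-type tori or multiplicity-two limits does not rule out a limit of intermediate genus, or a sphere. The missing idea is a Riemann--Hurwitz quantization:
\begin{itemize}
\item The rotation $r\in D_n$ fixes pointwise the circle $\Xi_0$ through both cone points and acts freely elsewhere.
\item Since it fixes the poles, it preserves both sides of $\Gamma_n/S^1$. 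So $\Gamma_n/S^1\to(\Gamma_n/S^1)/\mb Z_n$ is an $n$-fold cover branched exactly at $\Gamma_n/S^1\cap\Xi_0$.
\item The antipodal map exchanges $\Xi_0^+$ and $\Xi_0^-$. Hence the number of branch points is $4j$ when the poles lie on one side and $2+4j$ when they are separated.
\item The min-max limit inherits $\genus((\Gamma_n/S^1)/\mb Z_n)=0$ from the sweepout slices.
\end{itemize}
Riemann--Hurwitz then gives $\genus(\Gamma_n/S^1)=(2j-1)(n-1)$, resp.\ $2j(n-1)$. Only now does the upper bound force $\genus=n-1$ for even $n$ and $\genus\in\{0,2n-2\}$ for odd $n$.

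For (ii) the gaps are more serious. First, your sweepout fails. Two spheres joined by $n$ tubes have genus $n-1$ and leave the poles on one side. You also give no mechanism forcing the min-max limit to separate the cone points, which both the Euler-class computation and the count $2+4j$ require.

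Second, your claim that the pole-swapping symmetry is incompatible in the odd case is backwards. Any pole-swapping isometry that commutes with $S^1$ descends to an orientation-reversing map of $\mb S^4/S^1$. For instance, your $t\mapsto -t$ becomes a reflection with codimension-one fixed set, which the equivariant theory excludes. The paper instead uses $\sigma$ and $\hat s(z_1,z_2,x)=(\bar z_1,\bar z_2,-x)$. These invert $S^1$ and descend to orientation-preserving maps $A,s$. It joins three spheres $S_\rho,S_0,S_{-\rho}$ by $2n$ necks (genus $2n-2$) so that $A$ and $s$ exchange the two sides of every slice. Since $E_0$ is the unique area minimizer among ${\rm Pin}_-(2)$-hypersurfaces with non-orientable quotient in $RP^3$, this gives $\mf L(\Pi_n)>\mc H^3(E_0)$. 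The side-swapping passes to the limit as non-orientability of $\Gamma_n/{\rm Pin}_-(2)$ with odd multiplicity. Together with width $<3\mc H^3(E_0)$ this gives multiplicity one, and $[\Gamma_n/S^1]=[E_0/S^1]$ in $H_2(\mathring{\mb S}^4/S^1)$, so $e(\Gamma_n)$ is a generator.

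Third, ``$n$ sufficiently large'' is not about degenerate limits but about excluding $\genus=0$. Your proposed width asymptotics and comparison with Kapouleas--Zou-type areas have no estimate behind them. The paper argues as follows. Suppose $\genus(\Gamma_{n_i}/S^1)=0$ with $n_i\to\infty$. Each $sr^{n_i-l}$ swaps the separated poles, so it reverses the orientation of $\Gamma_{n_i}/S^1$. Its nonempty fixed set there is therefore a union of circles inside $\Upsilon_l$, forcing $\Upsilon_l\subset\Gamma_{n_i}/S^1$. These circles fill out $E_0/S^1$. White's compactness and the instability of $E_0/S^1$ in the weighted metric then give smooth multiplicity-one convergence to $E_0$. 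The area gap for non-totally-geodesic minimal hypersurfaces contradicts $\mf L(\Pi_{n_i})>\mc H^3(E_0)$.
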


\begin{remark}
    For any odd $n=2k+1\geq 3$, the minimal hypersurface $\Gamma_n\subset \mb S^4$ in Theorem \ref{Main thm: new minimal hypersurfaces in S4} must have $\genus(\Gamma_n/S^1)\in \{2n-2, 0\}$. 
    Although we currently cannot rule out the case that $\genus(\Gamma_n/S^1)=0$ for small $n=2k+1$, one can obtain a non-equatorial minimal hypersphere $\Gamma_n$ that is non-congruent to Hsiang's examples in \cite{hsiang1983sphericalI} provided that $\genus(\Gamma_n/S^1)=0$ (Theorem \ref{Thm: new minimal hypersurfaces of odd n}). 
    In particular, every $\Gamma_n$, $n\in\{2,3,\dots\}$, is topologically either a $3$-sphere or an $S^1$-bundle over an aspherical $2$-manifold $\Gamma_n/S^1$, which indicates that $\pi_2(\Gamma_n)=0$, and $\Gamma_n$ is irreducible. 
\end{remark}

The following result is a direct corollary. Indeed, we only need $\Ric>0$ and certain symmetries to construct the following minimal hypersurfaces on any Riemannian $S^4$ (Remark \ref{Rem: valid in symmetric S4 with Ric>0 for n-even}). 
\begin{corollary}
    For any orientable closed surface $\Sigma_{2k-1}$ of odd genus $2k-1$, there is a minimal embedding from $S^1\times \Sigma_{2k-1}$ into $\mb S^4$. 
\end{corollary}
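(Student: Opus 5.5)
The corollary follows directly from Theorem \ref{Main thm: new minimal hypersurfaces in S4}(i). Fix $k\geq 1$ and set $n=2k\in\{2,3,\dots\}$. The theorem gives an embedded minimal hypersurface $\Gamma_{2k}\subset\mb S^4$ with three properties. First, $\Gamma_{2k}$ is a principal $S^1$-bundle over the closed orientable surface $\Gamma_{2k}/S^1$. Second, this bundle is trivial. Third, $\genus(\Gamma_{2k}/S^1)=2k-1$.

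The only point to check is that triviality of the principal bundle gives a diffeomorphism $\Gamma_{2k}\cong S^1\times(\Gamma_{2k}/S^1)$. A principal $S^1$-bundle is trivial exactly when it admits a global smooth section $s$. Such a section yields the equivariant diffeomorphism
\[
S^1\times(\Gamma_{2k}/S^1)\to\Gamma_{2k},\qquad (\theta,x)\mapsto \theta\cdot s(x).
\]
This map is smooth and bijective, and its inverse is smooth in local trivializations. For an $S^1$-bundle, triviality is also equivalent to the vanishing of the Euler class, which is consistent with case (i).

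Next, closed orientable surfaces are classified up to diffeomorphism by genus. Hence $\Gamma_{2k}/S^1\cong\Sigma_{2k-1}$, and therefore $\Gamma_{2k}\cong S^1\times\Sigma_{2k-1}$.

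Finally, compose this diffeomorphism with the inclusion $\Gamma_{2k}\hookrightarrow\mb S^4$. Since $\Gamma_{2k}$ is an embedded minimal hypersurface, the composition is a minimal embedding $S^1\times\Sigma_{2k-1}\to\mb S^4$. As $k\geq1$ is arbitrary, every odd genus $2k-1$ occurs.

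There is no real obstacle: all the analytic content is carried by Theorem \ref{Main thm: new minimal hypersurfaces in S4}(i). The only care needed is to record that triviality of the bundle, which the theorem provides as a topological or smooth statement, is used only at the level of diffeomorphism type. The minimality and the embeddedness are properties of the fixed image in $\mb S^4$ and are unaffected by reparametrizing the domain.
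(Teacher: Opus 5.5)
Your proposal is correct and matches the paper, which reads this corollary directly off Theorem \ref{Main thm: new minimal hypersurfaces in S4}(i) (established as Theorem \ref{Thm: new minimal hypersurfaces of even n}). Taking $n=2k$ gives $\Gamma_{2k}\cong S^1\times\Sigma_{2k-1}$, since the vanishing Euler class makes the bundle trivial, and this embeds minimally in $\mb S^4$.
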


The proof of Theorem \ref{Main thm: new minimal hypersurfaces in S4} is inspired by the equivariant min-max approach developed in \cite{wang2026spherical} for Chern's spherical Bernstein problem in $\mb S^4$. In that work, Wang-Wang-Zhou established an equivariant min-max theory for minimal $G$-hypersurfaces in closed Riemannian $G$-manifolds $M$, where $G$ is a compact Lie group acting isometrically on $M$ with $\dim(M/G)=3$. This gives, in a broad setting, a rigorous form of the equivariant min-max program announced by Pitts-Rubinstein \cite{pitts1987applications} in 1987; see also \cite{ketover2016equivariant}\cite{ko2025} for related results under different hypotheses. 

One of the key constructions in \cite{wang2026spherical} is the suspended Hopf $S^1$-action on $\mb S^4$ (cf. Definition \ref{Def: suspended Hopf action}), i.e. the $S^1$-action on $\mb S^4=\{(z_1,z_2,x)\in \mb C\times \mb C\times \mb R:|z_1|^2+|z_2|^2+x^2=1\}$ defined by
\begin{align}\label{Eq: introduction - S1 action on S4}
    e^{i\theta}\cdot (z_1,z_2,x):=(e^{i\theta}z_1, e^{i\theta}z_2, x), \qquad\forall e^{i\theta}\in S^1\subset\mb C. 
\end{align}
Its orbit space $\mb S^4/S^1$ is topologically a $3$-sphere with two conical singular points at the north and south poles, which looks like an American football. By combining this $S^1$-action with suitable isometric $\mb Z_2$-actions on $\mb S^4/S^1$ (e.g. the antipodal symmetry), Wang-Wang-Zhou \cite{wang2026spherical}*{\S 7} provided the min-max constructions of minimal $S^3$ (non-equatorial), $T^3$, and $S^1\times S^2$ in $\mb S^4$. 

Although the orbit space $\mb S^4/S^1$ is singular, the regularity theory in \cite{wang2026spherical} implies that the $S^1$-equivariant min-max hypersurface $\Gamma\subset \mb S^4$ is smoothly embedded so that its quotient $\Gamma/S^1$ avoids the two singular points of $\mb S^4/S^1$. Hence, an embedded minimal $S^1$-hypersurface can be studied through a closed surface in the singular $3$-sphere $\mb S^4/S^1$. In view of Lawson's minimal surfaces in $\mb S^3$ \cite{lawson1970S3}, it is natural to seek minimal $S^1$-hypersurfaces $\Gamma\subset \mb S^4$ for which the quotient surface $\Gamma/S^1$ has arbitrarily large genus. 
However, to identify the diffeomorphism type of the equivariant min-max $S^1$-hypersurface $\Gamma$, one has to determine two pieces of data: 
\begin{itemize}
    \item[(1)] the genus of the quotient surface $\Gamma/S^1$, which determines the base surface of the bundle, 
    \item[(2)] the Euler class of the bundle $\Gamma\to\Gamma/S^1$, which is governed by whether $\Gamma/S^1$ separates the two singular points (i.e. the north and south poles) of $\mb S^4/S^1$.
\end{itemize}


We first address the genus of $\Gamma/S^1$. For this purpose, we introduce a modified reflectional dihedral $D_n$-action on $ \mb S^4/S^1$ (cf. Definition \ref{Def: reflectional dihedral action on S4/S1}) and construct $D_n$-equivariant sweepouts in the orbit space (Section \ref{Sec: new minimal hypersurfaces in S4}). The sweepouts are obtained by taking two parallel spheres $\{x=\pm\rho\}$ (resp. three parallel spheres $\{x=\pm \rho, 0\}$) in $\mb S^4/S^1$ and connecting them by $n$ (resp. $2n$) necks arranged according to the $D_n$-symmetry, which is analogous to the constructions in \cite{wang2026spherical}*{\S 7.2}. Applying equivariant min-max theory to these sweepouts gives embedded minimal $S^1$-hypersurfaces $\Gamma_n$ in $\mb S^4$ with $D_n$-symmetries on $\Gamma_n/S^1$. The genus information is then obtained by combining the reduced genus upper bounds in the equivariant min-max (cf. \cite{wang2026spherical}*{Theorem 1.5}, \cite{ketover2019genus}), the Riemann-Hurwitz formula (cf. \cite{ketover2016equivariant}*{(6.1)}, \cite{carlotto2022free}*{Appendix B}), and a compactness analysis of the resulting sequence as the symmetry order of $D_n$ tends to infinity.

Meanwhile, to determine the separating behavior of $\Gamma/S^1$, both the antipodal symmetry and the order-two elements in our modified reflectional $D_n$-action can be used to show that $\Gamma/S^1$ bounds a suitable equivariant open set in the orbit space. The symmetry of this open set then constrains the positions of the two singular points and determines the separating information needed to compute the Euler class of $\Gamma\to\Gamma/S^1$. 
In particular, using the antipodal symmetry, the separating behavior of $\Gamma/S^1$ can also be formulated after passing to the quotient $(\mb S^4/S^1)/\mb Z_2= RP^3$, where the orientability of $(\Gamma/S^1)/\mb Z_2$ provides the required topological information.

Moreover, our choice of the modified reflectional $D_n$-action is helpful for a further reason. It is not the direct suspension of the usual rotational dihedral action (used in  \cite{ketover2016free, carlotto2022free} for free boundary minimal surfaces in $\mb B^3$) along the $x$-coordinates. Rather, it is designed so that it lifts naturally to $\mb S^4$ and admits an analog under the {\em suspended weighted Hopf $S^1_{p,q}$-action} on $\mb S^4$:
\begin{align}\label{Eq: introduction - weighted S1 action on S4}
    e^{i\theta}\cdot (z_1,z_2,x):= (e^{ip\theta}z_1, e^{iq\theta}z_2,x), \qquad \forall e^{i\theta}\in S^1\subset \mb C, ~{\rm and~coprime}~p,q\in\mb Z\setminus\{0\}.  
\end{align}
This allows us to apply the equivariant min-max construction similar to that of Theorem \ref{Main thm: new minimal hypersurfaces in S4} in the weighted setting. 
Additionally, we generalize the $D_n$-sweepout construction in \cite{buzano2025self-shrinker} by desingularizing two intersecting spheres in a picture analogous to Lawson's $\xi_{1,n-1}$ surface. 
Although the antipodal symmetry used above is no longer available and the precise diffeomorphism type is harder to determine, the construction still produces infinitely many distinct embedded minimal Seifert fibered hypersurfaces in $\mb S^4$ (cf. Theorem \ref{Thm: minimal Seifert manifold in S4 - same side}, \ref{Thm: minimal Seifert manifold in S4 - different sides}, \ref{Thm: minimal Seifert manifold in S4 - Lawson type}).


\begin{theorem}\label{Main thm: minimal seifert hypersurfaces in S4}
    Given $n\in \{2,3,\dots\}$ and coprime $p,q\in \mb Z\setminus\{0\}$, there are embedded  minimal hypersurfaces $ \widehat\Gamma_n^{p,q},\wti \Gamma_n^{p,q}, \Gamma_n^{p,q} \subset \mb S^4$ that are Seifert fibered $3$-manifolds invariant under the $S^1_{p,q}$-action defined in \eqref{Eq: introduction - weighted S1 action on S4} such that for some $\wti N_{p,q}, N_{p,q}> 1$ large enough,
    \begin{itemize}
        \item[(i)] $(\mk  g(\widehat\Gamma_n^{p,q}),  \mk m_+(\widehat\Gamma_n^{p,q}), \mk m_-(\widehat\Gamma_n^{p,q}) )  \in \{(0,2,0),(0,0,2), (n-1,2,2),(n-1,4,0),(n-1,0,4)\}$; 
        \item[(ii)] $(\mk  g(\wti\Gamma_n^{p,q}),  \mk m_+(\wti\Gamma_n^{p,q}), \mk m_-(\wti\Gamma_n^{p,q}) )\in \{(n-1,3,1), (n-1,1,3), (2n-2,3,3),(2n-2,5,1),(2n-2,1,5)\}$ for all $n\geq \wti N_{p,q}$; 
        \item[(iii)] $\left(\mk  g(\Gamma_n^{p,q}),  \mk m_+(\Gamma_n^{p,q}), \mk m_-(\Gamma_n^{p,q}) \right)\in \{(n-1,3,1), (n-1,1,3)\}$ for all $n\geq N_{p,q}$;
    \end{itemize}
    where $ {\mk g}(\Gamma):=\genus(\Gamma/S^1_{p,q})$, $\mk m_\pm(\Gamma):= \# [(\Gamma/S^1_{p,q})\cap \Xi_0^\pm]$, and $\Xi_0^\pm$ are the distinguished arcs given by $\Xi_0^+:=\{(z_1,0^{(2)},x)\in\mb S^4\}/S^1_{p,q}$ and $\Xi_0^-:=\{(0^{(2)},z_2,x)\in\mb S^4\}/S^1_{p,q}$. 
    In particular, 
    \begin{itemize}
        \item[(1)] each of ${\mc {\wti{M}}}_{p,q} := \{\wti \Gamma_n^{p,q}\subset \mb S^4: n\geq  \wti N_{p,q}\}$ and ${\mc M}_{p,q} := \{\Gamma_n^{p,q}\subset \mb S^4: n\geq  N_{p,q}\}$ contains infinitely many embedded minimal hypersurfaces in $\mb S^4$ of distinct topological types;
        \item[(2)] ${\mc {\wti M}}_{p_1,q_1}\cap {\mc {\wti M}}_{p_2,q_2}=\emptyset$ and ${\mc M}_{p_1,q_1}\cap {\mc M}_{p_2,q_2}=\emptyset$ for any two distinct pairs of coprime integers $\{p_i>q_i\geq 1\}_{i=1}^2$. 
    \end{itemize}
\end{theorem}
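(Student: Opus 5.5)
The plan is to run the equivariant min-max construction of \cite{wang2026spherical} for the suspended weighted Hopf action $S^1_{p,q}$, in the same spirit as the proof of Theorem \ref{Main thm: new minimal hypersurfaces in S4}. I then read off the data $(\mk g,\mk m_+,\mk m_-)$ from symmetry and genus counting.

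\textbf{Orbit space and lifted symmetries.} First I would describe $\mb S^4/S^1_{p,q}$. Its singular set consists of the two arcs $\Xi_0^\pm$, with cone angles $2\pi/|p|$ and $2\pi/|q|$ along them, together with the poles $x=\pm1$. The points of $\Gamma/S^1_{p,q}$ lying on $\Xi_0^\pm$ are exactly the exceptional fibres, which is why the triple $(\mk g,\mk m_+,\mk m_-)$ determines a Seifert invariant. Next I would introduce the modified reflectional $D_n$-action, which commutes with $S^1_{p,q}$ because it acts by reflections and rotations that preserve each $\mb C$-factor. The regularity theory of \cite{wang2026spherical} guarantees that the min-max limit $\Gamma$ is smooth and that its quotient avoids the poles.

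\textbf{Three families of sweepouts.} For each of the three families I would build $D_n$-equivariant sweepouts:
\begin{itemize}
\item for $\widehat\Gamma_n^{p,q}$, two parallel spheres $\{x=\pm\rho\}$ joined by $n$ necks;
\item for $\wti\Gamma_n^{p,q}$, three spheres $\{x=\pm\rho,0\}$ joined by $2n$ necks;
\item for $\Gamma_n^{p,q}$, the Lawson $\xi_{1,n-1}$-type desingularization of two intersecting spheres, following \cite{buzano2025self-shrinker}.
\end{itemize}
For each family I need the width to exceed the areas of the degenerate slices. I would check this by the standard argument: for fixed $n$ the sweepout is nontrivial, and Lipschitz cutoffs near the singular arcs do not spoil it.

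\textbf{Reading off the triple.} Min-max then produces $\Gamma$ satisfying the reduced genus bound \cite{wang2026spherical}*{Theorem 1.5}. The quotient $\Gamma/S^1_{p,q}$ is a $D_n$-invariant closed surface. By Riemann--Hurwitz applied to the quotient by $D_n$, its genus is constrained to $0$, $n-1$, or $2n-2$, according to how many fixed points of the order-two elements lie on it. Intersections with $\Xi_0^\pm$ come in $D_n$-orbits, and since $\Gamma/S^1$ bounds an equivariant open set, parity considerations limit them to the listed small counts. Together these give the candidate lists in (i)--(iii).

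\textbf{Ruling out the lower-genus alternatives.} To exclude the lower-genus cases for large $n$ in (ii) and (iii), I would argue by compactness as $n\to\infty$. The limit of $\Gamma_n$ is a $D_\infty$, hence rotationally symmetric, minimal hypersurface with area bounded by the width. If the genus stayed in the lower class along a subsequence, the limit would be one of the rotational candidates: either a sphere of the wrong area, or a surface whose convergence is forced to be smooth with multiplicity one. Either outcome contradicts the neck-count lower bound on the width. I expect this step to be the main obstacle, since it requires a sharp width estimate strictly below twice, respectively three times, the area of the relevant rotational competitor.

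\textbf{Consequences (1) and (2).} For (1), the value $\mk g=n-1$ or $2n-2$ grows with $n$, and the base orbifold genus is a topological invariant of Seifert fibrations with infinite $\pi_1$. For (2), I would compare the multiplicities of the exceptional fibres, $|p|$ and $|q|$, which appear since $\mk m_\pm\ge1$ in (ii) and (iii). These are invariants of the Seifert structure, which is unique up to isomorphism for these large-base cases, so distinct pairs $\{p_i>q_i\}$ give distinct topological types.
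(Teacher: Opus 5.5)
Your architecture matches the paper's: the same three sweepouts, equivariant min-max, Riemann--Hurwitz, compactness as $n\to\infty$, and uniqueness of Seifert fibrations for (1)--(2). However, the step that actually yields (ii) and (iii) is missing. That step is excluding $\mathfrak g=0$, i.e.\ the triple $(0,1,1)$, for large $n$.

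Knowing that a subsequential limit is invariant under all rotations of $w$ gives no classification of the possible limits. There is also no ``neck-count lower bound'' on the width to contradict.

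The missing idea is a fixed-point argument:
\begin{enumerate}
\item Each $sr^{n-l}$ preserves the ambient orientation but swaps the two poles. Since $\Gamma_n/S^1_{p,q}$ separates the poles, $sr^{n-l}$ swaps the two sides and so reverses the orientation of $\Gamma_n/S^1_{p,q}$.
\item Its fixed set on the surface is therefore a union of closed curves. It is nonempty, because it contains the points $(0,\pm\tfrac12,0)\in\Xi_0^\pm\cap(E_0/S^1_{p,q})$; these lie on the surface because $\mathfrak m_\pm$ is odd.
\item This fixed set sits inside the ambient fixed circle $\Upsilon_l$, hence equals it. So $\Upsilon_l\subset\Gamma_n/S^1_{p,q}$ for every $l$.
\item As $n\to\infty$ these great circles become dense in $E_0/S^1_{p,q}$, so $E_0/S^1_{p,q}$ lies in the support of any limit.
\item Under the genus-zero hypothesis, White's compactness theorem applies near $E_0/S^1_{p,q}$. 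Multiplicity $\ge2$ is excluded by the instability of $E_0/S^1_{p,q}$ in the weighted metric.
\item Hence $\Gamma_n\to E_0$ smoothly, and the area gap for non-totally-geodesic minimal hypersurfaces in $\mathbb S^4$ forces $\Gamma_n=E_0$.
\end{enumerate}
The contradiction comes from the strict \emph{lower} bound $\mathbf L(\Pi_n)>\mathcal H^3(E_0)$. No sharp upper width estimate is needed. The crude bounds $<3\mathcal H^3(E_0)$, resp.\ $\le2\pi^2+\pi^3+\tfrac1{100}$, only serve to get multiplicity one.

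Both inputs to this argument are also missing from your proposal.

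First, in (ii) and (iii) the sweepouts start at $E_0$. Nontriviality of the sweepout does not give $\mathbf L(\Pi_n)>\mathcal H^3(E_0)$ strictly. Without strictness, min-max may simply return $E_0$, which is $G_n^{p,q}$-invariant with data exactly $(0,1,1)$. The paper gets strictness because every slice lies in the class $\{\llbracket E_0\rrbracket+\partial\Omega\}$ of $S^1\rtimes\langle\hat s\rangle$-invariant cycles, where $E_0$ is the unique minimizer. Equality would then confine a nontrivial $1$-sweepout to a small flat neighbourhood of $E_0$, which is impossible.

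Second, your parity remark that $\Gamma/S^1_{p,q}$ bounds an equivariant open set gives $\delta=0$, i.e.\ even $\mathfrak m_\pm$ as in (i). For (ii) and (iii) you must show the limit inherits the property that $\hat s$ \emph{interchanges} the two complementary regions, as in \cite{li2025RP2}, which also gives odd multiplicity. This is what forces $\mathfrak m_\pm$ odd and $\mathfrak e=\pm1/pq$, and it drives the argument above.

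Smaller points:
\begin{itemize}
\item In the Riemann--Hurwitz count, the branch points are the $\mathfrak m_++\mathfrak m_-$ fixed points of $r$ on $\Xi_0$, not fixed points of order-two elements.
\item You need the equivariant genus bound to give genus zero for $(\Gamma/S^1_{p,q})/\mathbb Z_n$. Otherwise, in (ii), $\mathfrak g=n$ with $\mathfrak m_\pm=1$ survives.
\item The lifted $D_n$ does not commute with $S^1_{p,q}$: $\hat s$ inverts it, and lifting $r$ needs B\'ezout coefficients $qa+pb=1$.
\item Regularity needs extra work. These sweepouts contain $\bigcup_l\Upsilon_l$, so they are not of locally boundary type, and the orbit space is singular where $\Xi_0$ meets $E_0/S^1_{p,q}$.
\end{itemize}
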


If $|p|>1$ (resp. $|q|>1$), then $\mk m_+(\Gamma)$ (resp. $\mk m_-(\Gamma)$) denotes the number of cone points of order $|p|$ (resp. $|q|$) in the orbifold $\Gamma/S^1_{p,q}$. 

\begin{remark}
    In the above theorem, the Seifert fibration structure of the minimal hypersurfaces is determined (up to orientation) by $|p|,|q|$ and $(\mk g, \mk m_{\pm})$ (cf. Lemma \ref{Lem: S1pq invariant Seifert manifolds}). 
    In particular, the minimal hypersurfaces $\widehat \Gamma_n^{p,q}$ have their Euler number $\mk e=0$, whereas $\wti\Gamma_n^{p,q}$ and $\Gamma_n^{p,q}$ have Euler number $\mk e=\pm1/pq$. 
    However, by \cite{scott1983three-manifolds}*{Theorem 3.8}, some special $3$-manifolds, such as $S^3$ and $S^1\times S^2$, admit more than one Seifert fibration structure. 
    Hence, at present, it is not clear whether $\{\widehat\Gamma_n^{p,q}\}_{n\geq 2}$ can have distinct topological types since we cannot rule out the case $\mk g(\widehat\Gamma_n^{p,q})=0$. 
    Nevertheless, for $n$ large enough, each of $\wti \Gamma_n^{p,q}$ and $\Gamma_n^{p,q}$ has a unique Seifert fibration structure (cf. Remark \ref{Rem: Seifert invariants}(iii)) and is irreducible by the comments after \cite{scott1983three-manifolds}*{Lemma 3.1}. 
\end{remark}

Using Theorem \ref{Main thm: new minimal hypersurfaces in S4}(ii) and Theorem \ref{Main thm: minimal seifert hypersurfaces in S4}(iii) with $|pq|=1$, we obtain the following corollary. 

\begin{corollary}
    For any orientable closed surface $\Sigma_{\mk g}$ of sufficiently large genus $\mk g$, the non-trivial principal $S^1$-bundle over $\Sigma_{\mk g}$ whose Euler class is a generator of $H^2(\Sigma_{\mk g};\mb Z)$ admits a minimal embedding into $\mb S^4$. 
\end{corollary}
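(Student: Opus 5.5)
The corollary combines the two main theorems. The odd-$n$ case of Theorem \ref{Main thm: new minimal hypersurfaces in S4} only produces even base genera $4k$, so the remaining genera must come from Theorem \ref{Main thm: minimal seifert hypersurfaces in S4}(iii) with $|pq|=1$.

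First, for genus $\mk g = 4k$ with $k$ large, we take $n=2k+1$ in Theorem \ref{Main thm: new minimal hypersurfaces in S4}(ii). This gives an embedded minimal $\Gamma_n\subset\mb S^4$ that is a principal $S^1$-bundle over $\Gamma_n/S^1$, with base genus $2n-2=4k$ and Euler class a generator. Since principal $S^1$-bundles over a closed orientable surface are classified up to bundle isomorphism by their Euler class in $H^2(\Sigma;\mb Z)\cong\mb Z$, $\Gamma_n$ is diffeomorphic to the required bundle. The two generators $\pm1$ give bundles that differ only by reversing fibre orientation, so their total spaces are diffeomorphic. This settles all sufficiently large multiples of $4$.

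Second, for the remaining genera we apply Theorem \ref{Main thm: minimal seifert hypersurfaces in S4}(iii) with $p=q=1$, where $S^1_{1,1}$ is the Hopf action \eqref{Eq: introduction - S1 action on S4}. This action is free away from the two poles. Since $\Gamma^{1,1}_n$ is embedded and avoids the singular points, it is a principal $S^1$-bundle over $\Gamma^{1,1}_n/S^1$, of genus $\mk g=n-1$ for $n\ge N_{1,1}$.

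The main step is identifying the Euler class of this bundle. The remark following Theorem \ref{Main thm: minimal seifert hypersurfaces in S4} says $\Gamma_n^{p,q}$ has Euler number $\mk e=\pm1/pq=\pm1$. For a Seifert fibration with no exceptional fibres this number is exactly the Euler class of the circle bundle, read in $H^2\cong\mb Z$, so the class is $\pm1$, a generator. If one wants to see this directly rather than through Lemma \ref{Lem: S1pq invariant Seifert manifolds}, one uses that $\mk m_+$ and $\mk m_-$ are odd. Then $\Gamma/S^1$ separates the two poles of the orbit space, so $\Gamma$ bounds a region containing exactly one fixed-point-free neighbourhood of a pole. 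That neighbourhood is bounded by a Hopf-fibred $S^3$ of Euler number $\pm1$, and a bordism argument over the complementary region, which is free, transfers this number to $\Gamma$.

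With $n=\mk g+1$, this yields every genus $\mk g\ge N_{1,1}-1$. Taking the threshold to be $\max$ of the two theorems' thresholds completes the proof; in fact the second step alone covers all sufficiently large genera.
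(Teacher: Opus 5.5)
Your proposal is correct and follows the paper's argument: combine Theorem \ref{Main thm: new minimal hypersurfaces in S4}(ii) with Theorem \ref{Main thm: minimal seifert hypersurfaces in S4}(iii) for $|pq|=1$, where odd $\mk m_\pm$ forces $\delta=1$ and hence $\mk e=\pm1$ (Lemma \ref{Lem: S1pq invariant Seifert manifolds}), and then use that principal $S^1$-bundles are classified by genus and Euler class. Your observation that the Seifert family alone covers all large genera agrees with the paper's remark that $\Gamma_{2k+1}$ and $\Gamma^{\pm1,\pm1}_{4k+1}$ have the same topological type, and your bordism check of the Euler class matches the homological argument in Part I of the proof of Theorem \ref{Thm: new minimal hypersurfaces of odd n}.
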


Note that a principal $S^1$-bundle $\Gamma$ can be naturally associated with a complex line bundle $\Gamma\times_{S^1} \mb C:=(\Gamma\times \mb C) / \sim$, where $( e^{i\theta}u,z)\sim (u,e^{i\theta}z)$ for any $(u,z)\in\Gamma\times \mb C$ and $e^{i\theta}\in S^1$. 
Then, by \cite{husemoller1994fiberbundle}*{p. 301, Remark 6.2}, the (oriented) topological type of a principal $S^1$-bundle over $\Sigma_{\mk g}$ is uniquely determined by $\mk g$ and the Euler class $e\in H^2(\Sigma_{\mk g};\mb Z)$. 
Hence, the minimal hypersurfaces $\Gamma_{2k+1}$ and $\Gamma_{4k+1}^{\pm 1,\pm 1}$ in Theorem \ref{Main thm: new minimal hypersurfaces in S4}(ii) and Theorem \ref{Main thm: minimal seifert hypersurfaces in S4}(iii), respectively, have the same topological type for $k$ large enough. 
Nevertheless, we expect that  $\Gamma_{2k+1}$ is non-congruent to $\Gamma_{4k+1}^{\pm 1,\pm 1}$ 
as suggested by the different prescribed dihedral actions in the two constructions. 

\subsection{\texorpdfstring{Constructions of minimal hypersurfaces in $\mb S^{n\geq 4}$}{Constructions of minimal hypersurfaces in Sn>=4}}
Besides the equivariant min-max constructions and the doubling-gluing constructions discussed above, there are several other methods for constructing minimal hypersurfaces in higher-dimensional unit spheres.

A classical systematic approach is the {\em equivariant differential geometry} initiated by Hsiang and Lawson \cite{hsiang71cohom}. 
By imposing a large symmetry group on the ambient sphere, this method reduces the minimal hypersurface equation to an ODE problem in a $2$-dimensional orbit space. 
Combining this reduction with the shooting method, Hsiang constructed infinitely many non-congruent minimal hypersurfaces in $\mb S^{n\geq 4}$ but with only a few simple topological types, including $S^{n-1}$, $S^1\times S^{n-2}$, and $S^2\times S^{n-3}$ \cite{hsiang1983sphericalI,hsiang1983sphericalII,hsiang1986sphericalIII,hsiang1987infinitly}. 
In the same equivariant-reduction spirit, Carlotto-Schulz \cite{carlotto2023hypertori} constructed minimal $S^1\times S^{n-1}\times S^{n-1}$ in $\mb S^{2n}$, while Firester-Tsiamis \cite{firester2026new} obtained further families of minimal $S^1\times S^p\times S^q$ in $\mb S^{p+q+2}$. 
When specialized to $\mb S^4$, these constructions give the minimal embedded $S^3$, $S^1\times S^2$, and $T^3$.

Another important source of examples comes from minimal isoparametric hypersurfaces in $\mb S^n$, which have been classified through the work of many authors; see, for instance, \cite{cecil2007isoparametric,miyaoka2013isoparametric,chi2020isoparametric}. 
In $\mb S^4$, besides the equatorial $\mb S^3$ and the Clifford hypersurfaces $S^1\times S^2$, the only isoparametric example is the Cartan hypersurface, which is diffeomorphic to $SO(3)/\mb Z_2^2$ \cite{Ki1987cartan,solomon1990isoparametric}.

There is also a related but different use of isoparametric geometry. 
Based on the isoparametric foliation, one can also reduce the construction of minimal hypersurfaces to an ODE problem, which extends the philosophy of Hsiang-Lawson. 
Very recently, Lai-Wei \cite{lai2026isoparametric} constructed generalized rotational minimal hypersurfaces in $\mb S^n$ of topological type $S^1\times M^{n-2}$, where $M^{n-2}$ is an isoparametric hypersurface in a subsphere $\mb S^{n-1}$. 
Related constructions were also developed by Firester-Tsiamis \cite{firester2026topology}. 
These methods produce further examples in $\mb S^{n\geq 4}$ dictated by the initial isoparametric data, but in $\mb S^4$ they still recover only the known types $S^1\times S^2$ and $T^3$.

Finally, Almgren-Pitts min-max theory \cite{almgren1962homotopy,pitts2014existence,schoen1981regularity} provides a flexible variational approach. 
Its recent developments have led to major breakthroughs, including the resolution of the Willmore conjecture \cite{marques2014min}, the proof of Yau's abundance conjecture \cite{marques2017existence,song2018existence}, results on the distribution of minimal hypersurfaces \cite{liokumovich2018weyl,irie2018density,marques2019equidistribution,song2021scarring}, and the Morse theory for the area functional \cite{marques2016morse,marques2021morse,marques2023morseinequalities,zhou2020multiplicity}. 
However, the topology of Almgren-Pitts min-max hypersurfaces is generally not controlled. 
In dimension three, the Simon-Smith min-max theory \cite{smith1983existence,colding2002minmax} and its refinements \cite{delellis2010genus,ketover2019genus} retain more topological information and have led to many constructions of closed minimal surfaces \cite{ketover2016equivariant,ketover2020catenoid}. 
This framework, however, is intrinsically three-dimensional and does not directly apply to minimal hypersurfaces in $\mb S^n$, $n\geq 4$.

Compared with the methods reviewed above, the equivariant min-max approach used in this paper is more flexible with respect to the ambient metric, requiring only suitable symmetries and curvature assumptions rather than the round metric itself (Remark \ref{Rem: valid in symmetric S4 with Ric>0 for n-even}). 
Additionally, it allows one to produce minimal hypersurfaces with a wider range of topological types in $\mb S^n$ with a fixed dimension. 
Further constructions in $\mb S^{n\geq 5}$ will be developed in forthcoming work.

\begin{acknowledgements}
    The author thanks Xin Zhou, Zhichao Wang, and Xingzhe Li for some helpful discussions. 
    T.W. is supported by the National Natural Science Foundation of China 12501076 and the Natural Science Foundation of Shanghai 25ZR1402252. 
\end{acknowledgements}



\section{\texorpdfstring{Isometric group actions on $\mb S^4$}{Isometric group actions on S4}}\label{Sec: group actions on S4}

In this section, we introduce some isometric group actions on $\mb S^4$ that are used to apply the equivariant min-max theory \cite{wang2026spherical}.

\subsection{Suspended Hopf action}
Let us first recall the $S^1$-action on $\mb S^4$ constructed in \cite{wang2026spherical}. 
Note that the standard {\em Hopf action} is an isometric $S^1$-action on $\mb S^3=\{(z_1,z_2)\in\mb C^2: |z_1|^2+|z_2|^2=1\}$ generated by the maps $e^{i\theta}\cdot(z_1,z_2):=(e^{i\theta}z_1,e^{i\theta} z_2)$ for all $e^{i\theta}\in S^1$. 
Thus, by regarding $\mb S^4$ as the suspension of $\mb S^3$, we can make the following definition. 
\begin{definition}\label{Def: suspended Hopf action}
	Let the unit $4$-sphere $\mb S^4$ be parameterized by
	\begin{align}\label{Eq: parameterize S4}
		\mb S^4:=\{(z_1,z_2,x)\in \mb C\times \mb C\times \mb R: |z_1|^2+|z_2|^2+|x|^2=1\}.
	\end{align}
	Then, we define the {\em suspended Hopf action on $\mb S^4$} by 
	\begin{align}\label{Eq: suspended Hopf action}
		e^{i\theta}\cdot (z_1,z_2,x) = R_\theta(z_1,z_2,x) := (e^{i\theta}z_1, e^{i\theta}z_2, x),\qquad \forall e^{i\theta}\in S^1=\{\lambda \in \mb C: |\lambda|=1\},
	\end{align}
where $(z_1,z_2,x)\in \mb S^4$. 
\end{definition}

This $S^1$-action has the fixed points given by the north and south poles $(0^{(2)},0^{(2)},\pm 1)$, and is a free action on 
\begin{align}
	\mathring{\mb S}^4:= \{(z_1,z_2,x)\in \mb S^4: -1<x<1\}. 
\end{align}
Note that for any fixed $x=\cos(\alpha)\in (-1,1)$, the above $S^1$-action restricted to $\mb S^4\cap \{x=\cos(\alpha)\} = \mb S^3_{\sin(\alpha)}$ is the standard Hopf action.  
Additionally, the Hopf map $h: \mb S^3_1\to \mb S^2_{1/2}$ defined by 
\begin{align}\label{Eq: Hopf map}
	h: (z_1,z_2)\in \mb S^3_1 \mapsto \left(z_1\bar z_2, \frac{|z_1|^2-|z_2|^2}{2} \right)\in \mb S^2_{1/2}=\{(w,t)\in\mb C\times \mb R: |w|^2+t^2=\frac{1}{4} \}
\end{align}
gives the Hopf fibration of $S^3$ over $S^2$ and is a Riemannian submersion under the associated round metrics. 
Hence, we can parameterize $\mb S^4/S^1$ by an ellipsoid 
\begin{align}\label{Eq: parameterize S4/S1}
	\mb S^4/S^1 &:= \left\{ (p, \cos(\alpha)): p\in \mb S^2_{\sin(\alpha)/2}, \alpha\in [0,\pi] \right\} \nonumber
	\\&= \left\{ (w,t,x)\in \mb C\times \mb R\times \mb R: 4\cdot (|w|^2+t^2) + x^2=1 \right\},
\end{align}
and define the metric $g_{\mb S^4/S^1}$ on $\mathring{\mb S}^4/S^1$ by 
\begin{align}\label{Eq: metric on S4/S1}
	g_{\mb S^4/S^1}= d\alpha^2+ \frac{\sin^2(\alpha)}{4} g_{\mb S^2_1}.
\end{align}
In particular, under the parameterizations \eqref{Eq: parameterize S4}\eqref{Eq: parameterize S4/S1}, the quotient map $\pi: \mb S^4\to \mb S^4/S^1$ is given by $\pi(0^{(2)},0^{(2)},\pm 1):=(0^{(2)},0,\pm 1)$ and
\begin{align}\label{Eq: quotient map of S4}
	\pi(z_1,z_2,x):= \left( \frac{z_1\bar z_2}{\sqrt{|z_1|^2+|z_2|^2}}, \frac{|z_1|^2-|z_2|^2}{2\sqrt{|z_1|^2+|z_2|^2}} , x \right) \quad \mbox{on $\mathring{\mb S}^4$},
\end{align}
which is a Riemannian submersion from $(\mathring{\mb S}^4,g_{\mb S^4})$ to $(\mathring{\mb S}^4/S^1 ,g_{\mb S^4/S^1}) $. 
For any $(p,\cos(\alpha))\in \mathring{\mb S}^4/S^1$, $\pi^{-1}((p,\cos(\alpha)))$ is a great circle in $\mb S^3_{\sin(\alpha)}\times \{\cos(\alpha)\}\subset \mb S^4$, and has the orbit volume function
\begin{align}\label{Eq: orbits volume function}
	V((p,\cos(\alpha))):= \mc H^1(\pi^{-1}((p,\cos(\alpha)))) = 2\pi \sin(\alpha). 
\end{align}
After continuously extending $V$ by $0$ to the singular points $(0^{(2)}, 0, \pm 1)\in \mb S^4/S^1$, we can define the weighted metric $\wti g_{{\mb S}^4/S^1}$ by 
\begin{align}\label{Eq: metric on S4/S1 weighted}
	\wti g_{\mb S^4/S^1} := V\cdot g_{\mb S^4/S^1}.
\end{align}
Then, it follows from the co-area formula that 
\begin{align}\label{Eq: area in S4/S1}
	\mc H^3_{g_{\mb S^4}}(\Sigma) = \mc H^2_{\wti g_{\mb S^4/S^1}}(\Sigma/S^1)
\end{align}
for any $S^1$-invariant hypersurface $\Sigma\subset \mb S^4$. 
Moreover, the following lemma is given by \cite{wang2026spherical}.

\begin{lemma}[\cite{wang2026spherical}*{Lemma 7.1-7.3} ]\label{Lem: properties of S1-action}
	Using the above notations, we have that
	\begin{itemize}
		\item[(i)] there is only one $S^1$-invariant equatorial hypersphere 
			\begin{align}\label{Eq: S1-invariant equator}
				E_0:=\mb S^4\cap \{x=0\};
			\end{align}
		\item[(ii)] $(\mathring{\mb S}^4/S^1, \wti g_{\mb S^4/S^1})$ has positive Ricci curvature;
		\item[(iii)] every embedded $S^1$-hypersurface of $\mb S^4$ is contained in $\mathring{\mb S}^4$. 
	\end{itemize}
\end{lemma}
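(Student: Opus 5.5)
We prove the three parts of Lemma \ref{Lem: properties of S1-action} separately. Each is a direct computation with the parameterizations \eqref{Eq: parameterize S4}--\eqref{Eq: metric on S4/S1 weighted}, together with one standard fact about fixed points of isometric actions.

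\textbf{Part (i).} Every equatorial hypersphere has the form $E_v=\mb S^4\cap v^\perp$ for a unit vector $v\in\mb R^5=\mb C^2\times\mb R$. Since the action is linear and isometric, $E_v$ is $S^1$-invariant exactly when the line $\mb R v$ is invariant. The action is the rotation $(e^{i\theta},e^{i\theta})$ on $\mb C^2$ and the identity on $\mb R$. Its only invariant lines are therefore contained in the fixed subspace $\{0\}\times\{0\}\times\mb R$, because any nonzero $\mb C^2$-component rotates nontrivially. Hence $v=(0,0,\pm1)$, and $E_v=E_0$.

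\textbf{Part (ii).} We compute the Ricci curvature of the conformal metric $\wti g=V g$ on the $3$-dimensional warped product $g=d\alpha^2+\tfrac{\sin^2\alpha}{4}g_{\mb S^2_1}$, with $V=2\pi\sin\alpha$.
\begin{itemize}
\item Write $V=e^{2f}$, so $f=\tfrac12\log\sin\alpha+\text{const}$.
\item Apply the conformal change formula in dimension $3$:
\[
\Ric_{\wti g}=\Ric_g-(\nabla^2 f-df\otimes df)-(\Delta f+|df|^2)g .
\]
\item Compute $\Ric_g$ for the warped product with warping function $\phi=\tfrac12\sin\alpha$. In the $\alpha$ direction it equals $-2\phi''/\phi=2$. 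In the tangential directions it equals $\bigl[-\phi''/\phi+(1-\phi'^2)/\phi^2\bigr]g$, where the fiber $\mb S^2_1$ has curvature $1$; this gives $1+(1-\tfrac14\cos^2\alpha)\cdot\tfrac{4}{\sin^2\alpha}$.
\item Compute $\nabla^2 f$ and $\Delta f$ in the same frame, using $f'=\tfrac12\cot\alpha$, $f''=-\tfrac12\csc^2\alpha$, and $\nabla^2f(e_i,e_i)=f'\phi'/\phi=\tfrac12\cot^2\alpha$ in the tangential directions.
\end{itemize}
Substituting gives explicit functions of $\alpha$ in the radial and tangential directions, and the conclusion is that both are positive on $(0,\pi)$.

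This sign check is the main work. The terms of order $\csc^2\alpha$ must dominate near the poles, and one must confirm positivity on the middle range. I expect it to reduce to an elementary trigonometric inequality. Near the singular points the original metric resembles a cone of small angle, since the fiber radius is about $\tfrac{\alpha}{2}$; this gives large positive curvature, which should help.

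\textbf{Part (iii).} Suppose an embedded $S^1$-hypersurface $\Sigma$ passes through a pole $N$. Its tangent space $T_N\Sigma$ is a hyperplane in $T_N\mb S^4\cong\mb C^2$ invariant under the isotropy representation, which is the Hopf rotation $(e^{i\theta},e^{i\theta})$. The normal line to $T_N\Sigma$ would then be invariant as well. But this rotation has no invariant real line in $\mb C^2$, which is a contradiction. Hence $\Sigma$ avoids both poles, i.e. $\Sigma\subset\mathring{\mb S}^4$.
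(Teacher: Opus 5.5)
Your proof has the right structure, and parts (i) and (iii) are complete. In (i), to rule out $R_\theta v=-v$, take $\theta=\pi/2$: then $i(z_1,z_2)=\pm(z_1,z_2)$ forces $(z_1,z_2)=0$. The paper gives no argument for this lemma and simply imports it from Wang--Wang--Zhou. Your isotropy-representation arguments for (i) and (iii), together with the conformal-change computation for (ii), therefore amount to a self-contained verification.

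The gap is in (ii). Every ingredient you list is correct:
\begin{itemize}
\item $\Ric_g(\partial_\alpha,\partial_\alpha)=2$;
\item $\Ric_g(e,e)=2+3\csc^2\alpha$ for $g$-unit tangential $e$;
\item $f'=\tfrac12\cot\alpha$ and $f''=-\tfrac12\csc^2\alpha$;
\item $\nabla^2 f(e,e)=\tfrac12\cot^2\alpha$.
\end{itemize}
However, you stop before substituting and only \emph{expect} positivity. That sign check is the entire content of (ii), so the proof is unfinished.

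It does close exactly. Use $\Delta f=f''+2f'\cot\alpha=-\tfrac12\csc^2\alpha+\cot^2\alpha$ and $|df|^2=\tfrac14\cot^2\alpha$. This gives
\[
\Ric_{\wti g}(\partial_\alpha,\partial_\alpha)=2+\csc^2\alpha-\cot^2\alpha=3,\qquad
\Ric_{\wti g}(e,e)=2+\tfrac72\csc^2\alpha-\tfrac74\cot^2\alpha=\tfrac{15}{4}+\tfrac74\csc^2\alpha .
\]
All mixed terms vanish, since every tensor involved is diagonal in the frame $\partial_\alpha,e_1,e_2$. Hence $\Ric_{\wti g}>0$ on $\mathring{\mb S}^4/S^1$.

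Your heuristic would not have settled the radial direction. There, $-(\nabla^2 f-df\otimes df)$ contributes $\tfrac12\csc^2\alpha+\tfrac14\cot^2\alpha$, and $-(\Delta f+|df|^2)g$ contributes $\tfrac12\csc^2\alpha-\tfrac54\cot^2\alpha$. Their singular parts cancel exactly. Consistently, near the poles $\wti g\approx ds^2+(\tfrac34 s)^2 g_{\mb S^2_1}$ is a cone, whose radial Ricci curvature vanishes to leading order. So no $\csc^2\alpha$ term dominates radially. Positivity there rests on the identity $\csc^2\alpha-\cot^2\alpha=1$ together with the bounded term $\Ric_g(\partial_\alpha,\partial_\alpha)=2$. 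The large positive cone curvature you appeal to only helps in the tangential directions.
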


\subsection{\texorpdfstring{Extension through the antipodal $\mb Z_2$-action}{Extension through the antipodal Z2-action}}\label{Subsec: extension by antipodal action}
By \cite{wang2026spherical}*{Lemma 7.4}, the suspended Hopf action can be extended to an isometric action on $\mb S^4$ of the Lie group 
\begin{align}\label{Eq: extended group action by antipodal}
	G=(S^1\rtimes \mb Z_4)/\mb Z_2. 
\end{align}

Specifically, let 
\begin{align}\label{Eq: antipodal}
	A: \mb S^4/S^1 \to \mb S^4/S^1, \qquad A(w,t,x):=(-w,-t,-x),
\end{align}
be the antipodal map on $\mb S^4/S^1$ under the parameterization \eqref{Eq: parameterize S4/S1}, which induces a free isometric $\mb Z_2=\langle A\rangle$ action on $\mb S^4/S^1$ under the metrics $g_{\mb S^4/S^1}$ and $\wti g_{\mb S^4/S^1}$. 
Note that the map $A$ can be lifted to an isometry $\sigma: \mb S^4\to\mb S^4$ given by
\begin{align}\label{Eq: lift antipodal map}
	\sigma(z_1,z_2,x):= (-\bar z_2, \bar z_1, -x), \qquad \forall (z_1,z_2,x)\in \mb S^4,
\end{align}
so that $A\circ \pi =\pi\circ \sigma$, which generates a $\mb Z_4=\langle\sigma\rangle$ action on $\mb S^4$ satisfying that 
\[ \sigma^2= R_\pi \qquad {\rm and}\qquad \sigma\circ R_\theta = R_{-\theta}\circ \sigma .\]
Hence, using the homomorphism $\varphi: \mb Z_4\to {\rm Aut}(S^1)$ with $\varphi(id)=\varphi(\sigma^2):= id$ and $\varphi(\sigma)= \varphi(\sigma^3): e^{i\theta}\mapsto e^{-i\theta}$, we can define the group $\wti G$ by 
\[ \wti G=S^1\rtimes_\varphi \mb Z_4 \]
with the group operation 
\[(e^{i\theta_1},\sigma^{k_1})\cdot (e^{i\theta_2},\sigma^{k_2}) := (e^{i\theta_1}\cdot \varphi(\sigma^{k_1})(e^{i\theta_2}), \sigma^{k_1}\cdot \sigma^{k_2})=(e^{i(\theta_1+(-1)^{k_1}\theta_2)}, \sigma^{k_1+k_2\mod 4}),\] 
which acts isometrically on $\mb S^4$ by 
	\[(e^{i\theta},\sigma^k)\cdot (z_1,z_2,x):= R_\theta\circ \sigma^k(z_1,z_2,x), \qquad \forall (z_1,z_2,x)\in \mb S^4 .\] 
Since $\wti G$ has a non-trivial normal subgroup $H:=\{(e^{i0},\sigma^0), (e^{i\pi},\sigma^2)\}\cong \mb Z_2$ acting as the identity on $\mb S^4$, the $\wti G$-action descends to an action on $\mb S^4$ by the quotient group $G=\wti G/H$.

\subsection{Extension through the dihedral action}

In this section, we introduce the dihedral action and extend the previous $G=(S^1\rtimes \mb Z_4)/\mb Z_2$ action on $\mb S^4$. 

To begin with, for any integer $n\in \{2,3,\dots\}$, consider the dihedral group 
\begin{align}\label{Eq: dihedral group}
	D_n=\langle r,s | r^n=s^2=e, srs=r^{-1}\rangle.
\end{align}
Then, we can define the {\em reflectional $D_n$-action on $(x,y,z)\in \mb R^3$} to be generated by 
\begin{itemize}
	\item $r$: the $\frac{2\pi}{n}$-rotation in $\mb R^3$ around the $z$-axis;
	\item $s$: the reflection in $\mb R^3$ across the $(x,z)$-plane.
\end{itemize}
Intuitively, this action is the trivial extension of the standard $D_n$-action on $\mb R^2={\rm Span} (x,y)$. 
Indeed, one can take $n$ axes $\{\xi_l\}_{l=1}^n$ in the $(x,y)$-plane as 
\begin{align}\label{Eq: Dn reflectional axis}
	\xi_l:=\left\{t\cdot \left(\cos\frac{l\pi}{n} , \sin \frac{l\pi}{n}, 0 \right)\in \mb R^3:t\in\mb R \right\},
\end{align}
and the $D_n$-action is generated by all the reflections $\psi_l$, $l=1,\dots,n$, across the $(\xi_l,z)$-plane, where 
\[ \psi_l= 
\begin{pmatrix}
\cos\frac{2\pi l}{n} & \sin \frac{2\pi l}{n} & 0 \\
\sin \frac{2\pi l}{n} & -\cos \frac{2\pi l}{n} & 0 \\
0 & 0 & 1
\end{pmatrix}
\in O(3)\setminus SO(3).
\] 
Note that 
\begin{itemize}
	\item $r=\psi_{l+1}\circ\psi_{l}=\psi_1\circ\psi_n$ for any $l=1,\cdots, n-1$;
	\item $s=\psi_n$, and $\psi_{n-l}=sr^{l}$, $l=1,\dots, n-1$.
\end{itemize}
For simplicity, we identify $\mb R^3$ with $\mb C\times \mb R$, and the reflectional $D_n$-action is generated by 
\begin{align}\label{Eq: Dn reflectional action on R3}
	r: (w,t)\mapsto (e^{\frac{2\pi}{n}i}\cdot w, t) \qquad{\rm and}\qquad s: (w,t) \mapsto (\bar w, t),
\end{align} 
where $(w,t)\in \mb C\times \mb R$. 

\begin{remark}\label{Rem: rotational Dn action}
	The aforementioned action should be distinguished from the {\em rotational dihedral action} used in \cite{ketover2016free} and \cite{carlotto2022free}, where $s\in D_n$ acts by the rotation in $\mb R^3$ around the $x$-axis (i.e. the $(3,3)$-element is $-1$ in the matrices $\{\psi_l\}_{l=1}^n$). 
\end{remark}

\subsubsection{\texorpdfstring{Modified reflectional dihedral action on $\mb S^4/S^1$}{Modified reflectional dihedral action on S4/S1}}
By definition, the reflectional $D_n$-action on $\mb R^3$ is a family of orthogonal transformations, which induces an isometric $D_n$-action on each round $2$-sphere $\mb S^2_r\subset \mb R^3$. 
Recall that the orbit space (see \eqref{Eq: parameterize S4/S1})
\[\mb S^4/S^1=\{(w,t,x)\in\mb C\times \mb R\times \mb R: 4\cdot (|w|^2+t^2)+x^2=1\}\] 
with the metric $g_{\mb S^4/S^1}$ in \eqref{Eq: metric on S4/S1} is the suspension of a round $2$-sphere of radius $1/2$. 
Namely, for any fixed $\alpha\in [0,\pi]$, the slice of $\mb S^4/S^1$ at $x=\cos(\alpha)$ is a round $2$-sphere of radius $\sin(\alpha)/2$. 
Hence, we can suspend the $D_n$-action \eqref{Eq: Dn reflectional action on R3} along the $x$-axis and simultaneously reflect the $x$-coordinate to obtain the following orientation-preserving isometric $D_n$-action on $\mb S^4/S^1$. 
 
\begin{definition}\label{Def: reflectional dihedral action on S4/S1}
	Using the parameterization \eqref{Eq: parameterize S4/S1} and the metrics $g_{\mb S^4/S^1},\wti g_{\mb S^4/S^1}$ of $\mb S^4/S^1$, the {\em modified reflectional $D_n$-action on $\mb S^4/S^1$} is an orientation-preserving isometric $D_n$-action generated by 
	\begin{align}\label{Eq: Dn reflectional action on S4/S1}
	r: (w,t,x) \mapsto (e^{\frac{2\pi}{n}i} w, t, x)\qquad {\rm and}\qquad s: (w,t,x)\mapsto (\bar w, t, -x),
\end{align}
where $(w,t,x)\in \mb S^4/S^1$, and $r,s\in D_n$. 
\end{definition}
\begin{remark}
	If one simply takes the action of $s $ as $(w,t,x)\mapsto (\bar w, t,x)$ by suspending along the $x$-axis, then  
    such an action fails to be orientation-preserving, and thus $\mb S^4/S^1$ admits special exceptional orbits (\cite{wang2026spherical}*{Definition 2.2}), which violates the assumption of the equivariant min-max in \cite{wang2026spherical}. 
	Hence, we modify the suspended action of $s$ by reflecting the $x$-coordinates simultaneously so that $s$ becomes the $\pi$-rotation in the $({\rm Im}(w),x)$-plane.
\end{remark}

Consider the stereographic projection of $\mb S^4/S^1$ at the north pole $(0^{(2)},0,1)$:
\begin{align}\label{Eq: stereographic projection}
	P: (\mb S^4/S^1) \setminus \{(0^{(2)},0,1)\} \to \mb R^3, \qquad P(w, t, x ):=\left(\frac{{\rm Re} (w)}{1-x}, \frac{{\rm Im}(w)}{1-x}, \frac{t}{1-x} \right). 
\end{align}
Then, $\mb S^4/S^1$ can be identified with $\mb R^3\cup\{\infty\}$, and the action of $r\in D_n$ on $\mb S^4/S^1$ in \eqref{Eq: Dn reflectional action on S4/S1} can be identified with the action of $r\in D_n$ on $\mb R^3$ in \eqref{Eq: Dn reflectional action on R3}, i.e. 
\begin{align}\label{Eq: r action using stereographic projection}
	P\circ r= r\circ P ~:~ (\mb S^4/S^1) \setminus \{(0^{(2)},0,1)\}\to \mb R^3. 
\end{align}
However, the two actions of $s\in D_n$ in \eqref{Eq: Dn reflectional action on S4/S1} and \eqref{Eq: Dn reflectional action on R3} are related by
\begin{align}\label{Eq: s action using stereographic projection}
	P\circ s= I\circ s\circ P ~:~ (\mb S^4/S^1) \setminus \{(0^{(2)},0,\pm 1)\}\to \mb R^3\setminus\{0\}, 
\end{align}
where $I:\mb R^3\setminus\{0\}\to \mb R^3\setminus\{0\}$, $I(x,y,z):=\frac{(x,y,z)}{4(x^2+y^2+z^2)}$ is the inversion across the sphere $\mb S^2_{1/2}=P(E_0)$. 
Nevertheless, if one takes the stereographic projection $\widehat P$ of $\mb S^4/S^1$ at $(0^{(2)}, 1/2, 0)$, i.e. $\widehat P: (w,t,x)\in (\mb S^4/S^1)\setminus \{(0^{(2)},1/2,0)\} \mapsto (\frac{2{\rm Re}(w)}{1-2t}, \frac{2{\rm Im}(w)}{1-2t}, \frac{x}{1-2t})\in \mb R^3$, then for $r,s\in D_n$ in \eqref{Eq: Dn reflectional action on S4/S1}, $\widehat P\circ r\circ \widehat P^{-1}$ and $\widehat P\circ s\circ \widehat P^{-1}$ generate the rotational $D_n$-action on $\mb R^3$ used in \cite{ketover2016free}\cite{carlotto2022free}. 
Hence, the modified reflectional $D_n$-action in Definition \ref{Def: reflectional dihedral action on S4/S1} is obtained by suspending the rotational $D_n$-action on the ellipsoid $\{(w,x)\in\mb C\times \mb R: 4|w|^2+x^2=1\}$ along the $t$-axis.

\medskip
Before proceeding, we introduce some notations that will be used in the next section. 
For any $l\in \{1,\dots, n\}$, let 
\begin{align}\label{Eq: neck center for odd n}
	\hat\xi_l:= \left\{t\cdot \left(\cos(\frac{l\pi}{n}-\frac{\pi}{2n}) , \sin (\frac{l\pi}{n}-\frac{\pi}{2n}), 0 \right)\in \mb R^3:t\in\mb R \right\}
\end{align}
be the axes in the $(x,y)$-plane of $\mb R^3$ bisecting $\xi_l$ and $\xi_{l-1}$, where $\xi_{0}=\xi_{n}$ (Figure \ref{Fig: axes}). 
\begin{figure}[h]
\centering
\includegraphics[height=1.5in]{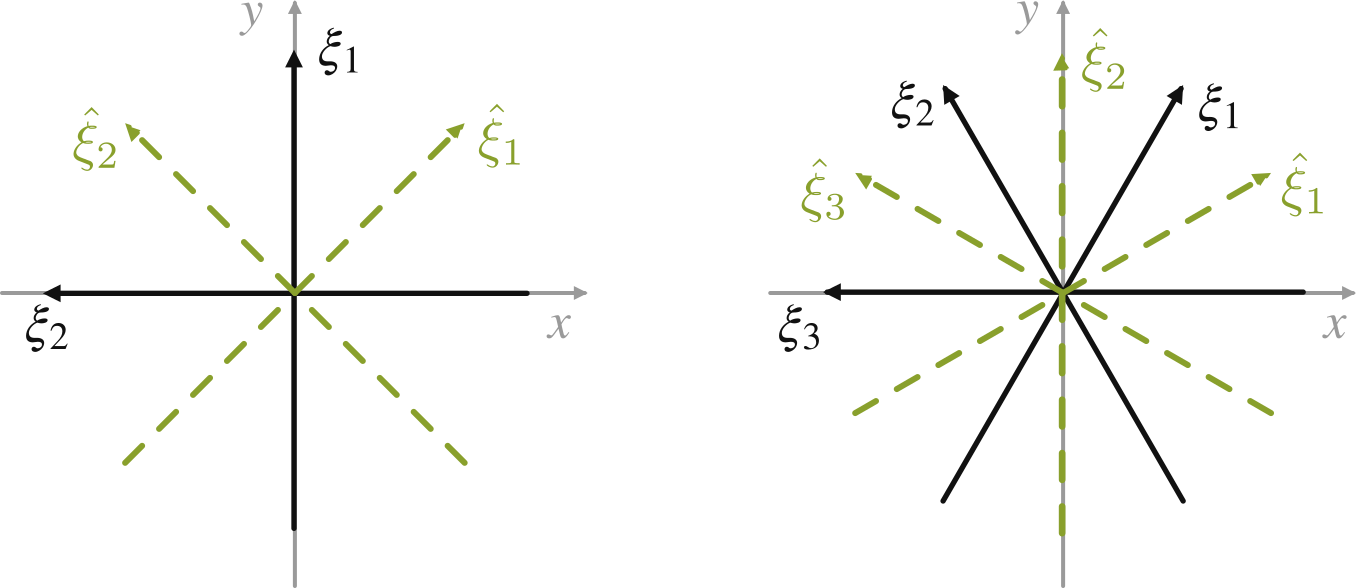}
\caption{Axes $\{\xi_l\}_{l=1}^n$ and $\{\hat\xi_l\}_{l=1}^n$ for $n=2,3$.}
\label{Fig: axes}
\end{figure}
For our convenience, we also introduce the following notations for $l\in\{1,\dots, n\}$:
\begin{align}
	\Xi_l^\pm &:=\left\{ \left(\pm\frac{\sin(\alpha)}{2}\cdot e^{\frac{l\pi}{n}i}, 0 , \cos(\alpha)\right)\in \mb C\times \mb R\times \mb R : \alpha\in [0,\pi] \right\}, \label{Eq: half axis xi on S3}
	\\
	\widehat\Xi_l^\pm &:=\left\{ \left(\pm\frac{\sin(\alpha)}{2}\cdot e^{(\frac{l\pi}{n}-\frac{\pi}{2n})i}, 0 , \cos(\alpha)\right)\in \mb C\times \mb R\times \mb R : \alpha\in [0,\pi] \right\}, \label{Eq: half axis hat xi on S3}
    \\
    \Xi_0^\pm &:= \left\{\left (0^{(2)},\pm\frac{\sin(\alpha)}{2}, \cos(\alpha) \right) \in  \mb C\times \mb R\times \mb R: \alpha\in [0,\pi] \right\}, \label{Eq: half axis z on S3}
\end{align}
which are the longitudes on $\mb S^4/S^1$ corresponding to the positive/negative semi-axes of $\xi_l,\hat\xi_l,z$ in $ \mb R^3$ via the stereographic projection $P$ respectively. 
For $l\in\{1,\dots, n\}$, denote by
\begin{align}\label{Eq: axes on S3}
    \Xi_l := \Xi_l^+\cup \Xi_l^-,\qquad 
    \widehat \Xi_l :=  \widehat\Xi_l^+\cup \widehat\Xi_l^-, 
    \qquad
    \Xi_0 := \Xi_0^+\cup\Xi_0^-
\end{align}
the circles on $\mb S^4/S^1$ corresponding to the $\xi_l$-axis, $\hat \xi_l$-axis and $z$-axis in $\mb R^3$ respectively. 
Moreover, for each $1\leq l \leq n$, we define 
\begin{align}\label{Eq: circle on E0/S1}
	 \Upsilon_l:=\left\{ \left( \frac{\sin(\beta)}{2}e^{\frac{l\pi}{n}i}, \frac{\cos(\beta)}{2} , 0 \right) \in \mb C\times \mb R\times \mb R: \beta\in [0,2\pi] \right\}\subset E_0/S^1, 
\end{align}
to be the great circle on $E_0/S^1=\mb S^2_{1/2}\times \{0\}$ containing $E_0/S^1 \cap \Xi_l$. 

\begin{lemma}\label{Lem: fixed points set of s}
	$\Upsilon_l$ is the fixed-point set of the $\mb Z_2=\langle s r^{n-l}\rangle$ action on $\mb S^4/S^1$. 
\end{lemma}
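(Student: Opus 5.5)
Compute the composite $sr^{n-l}$ explicitly on the parameterization \eqref{Eq: parameterize S4/S1} and read off its fixed points. By Definition \ref{Def: reflectional dihedral action on S4/S1}, applying $r^{n-l}$ first gives $(w,t,x)\mapsto (e^{\frac{2\pi(n-l)}{n}i}w,t,x)=(e^{-\frac{2l\pi}{n}i}w,t,x)$. Then $s$ gives
\[ sr^{n-l}(w,t,x)=\bigl(\overline{e^{-\frac{2l\pi}{n}i}w},\,t,\,-x\bigr)=\bigl(e^{\frac{2l\pi}{n}i}\bar w,\,t,\,-x\bigr). \]
Since $s^2=e$ and $sr^{n-l}s=r^{l-n}$, we have $(sr^{n-l})^2=e$. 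Hence $\langle sr^{n-l}\rangle\cong\mb Z_2$, and it is enough to find the fixed-point set of this single involution.

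A point $(w,t,x)\in\mb S^4/S^1$ is fixed exactly when $x=-x$ and $e^{\frac{2l\pi}{n}i}\bar w=w$. The first condition gives $x=0$. For the second, write $w=\rho e^{i\phi}$; the condition becomes $\rho e^{i(\frac{2l\pi}{n}-\phi)}=\rho e^{i\phi}$. So either $w=0$ or $\phi\equiv\frac{l\pi}{n}\pmod \pi$, which means $w\in\mb R\cdot e^{\frac{l\pi}{n}i}$. The coordinate $t$ is unconstrained. The fixed set is therefore
\[ \{(\lambda e^{\frac{l\pi}{n}i},t,0):\lambda,t\in\mb R,\ 4(\lambda^2+t^2)=1\}. \]
Putting $\lambda=\frac{\sin\beta}{2}$ and $t=\frac{\cos\beta}{2}$ with $\beta\in[0,2\pi]$ parameterizes this circle, and the result is exactly $\Upsilon_l$ from \eqref{Eq: circle on E0/S1}. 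The reverse inclusion, that every point of $\Upsilon_l$ is fixed, is the same computation read backwards.

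Geometrically, $sr^{n-l}$ is the $\pi$-rotation of the ellipsoid about the plane spanned by $e^{\frac{l\pi}{n}i}$ in $w$ and by $t$; this fits the Remark saying $s$ is a $\pi$-rotation. The only step needing care is the sign and order convention for composing $s$ with $r^{n-l}$. With the opposite order one gets $r^{n-l}s(w,t,x)=(e^{-\frac{2l\pi}{n}i}\bar w,t,-x)$, whose fixed axis lies along $e^{-\frac{l\pi}{n}i}$. That axis is again $\xi_{n-l}$ up to sign, which would mismatch the labelling of $\Upsilon_l$. So I will fix the convention $sr^{n-l}=s\circ r^{n-l}$, consistent with $\psi_{n-l}=sr^l$ stated earlier: under that convention $\psi_{n-l}$ reflects across $\xi_{n-l}$, so $sr^{n-l}$ corresponds to $\xi_l$, as required.
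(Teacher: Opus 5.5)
Your proof is correct and takes essentially the same route as the paper. The paper notes that $sr^{n-l}$ fixes the great circle through the $\xi_l$-direction and the $t$-axis on each slice sphere, and that the extra flip $x\mapsto -x$ in Definition~\ref{Def: reflectional dihedral action on S4/S1} forces $x=0$; your explicit computation establishes exactly this. Your check of the composition order against $\psi_{n-l}=sr^l$ is a useful addition that the paper leaves implicit.
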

\begin{proof}
	Since $\{(\sin(\beta)  e^{\frac{l\pi}{n}i}, \cos(\beta)):\beta\in [0,2\pi]\}\subset  \mb C\times \mb R$ is the fixed-point set of the $\mb Z_2=\langle s r^{n-l}\rangle$ action on $\mb S^2\subset \mb C\times \mb R$ by \eqref{Eq: Dn reflectional action on R3}, the desired result follows directly from Definition \ref{Def: reflectional dihedral action on S4/S1}. 
\end{proof}

\subsubsection{\texorpdfstring{Lifted dihedral group action on $\mb S^4$}{Lifted dihedral group action on S4}}
Since the antipodal map $A: (w,t,x)\mapsto (-w, -t, -x)$ on $\mb S^4/S^1$ is expressed as the scalar matrix $-I_4$, we know the modified reflectional $D_n$-action commutes with the antipodal $\mb Z_2$-action. 
Hence, we obtain a finite group 
\[\mb Z_2\times D_n\] 
acting by orientation-preserving isometries on $\mb S^4/S^1$ under the metrics $g_{\mb S^4/S^1}$ and $\wti g_{\mb S^4/S^1}$.

In Section \ref{Subsec: extension by antipodal action}, we have shown that the antipodal map $A$ on $\mb S^4/S^1$ can be lifted to a map $\sigma$ on $\mb S^4$ so that the actions of $S^1$ and $\sigma$ generate a group $G= (S^1\rtimes \mb Z_4)/ \mb Z_2$ action on $\mb S^4$. 
We next lift the $D_n$-action into $\mb S^4$ and further extend the $G$-action.  

\begin{definition}\label{Def: reflectional dihedral action on S4}
	Using the parameterization \eqref{Eq: parameterize S4}, define the {\em modified reflectional $D_n$-action on $\mb S^4$} to be an isometric $D_n$-action generated by  
	\begin{align}\label{Eq: Dn reflectional action on S4}
	\hat  r (z_1,z_2,x) := (e^{\frac{2\pi}{n}i} z_1, z_2, x) \qquad{\rm and}\qquad \hat  s (z_1,z_2,x):= ( \bar z_1,  \bar z_2, -x),
\end{align}
where $(z_1,z_2,x)\in\mb S^4$ and $D_n:=\langle \hat  r,\hat  s: \hat  r^n=\hat  s^2=e, \hat  s\hat  r\hat  s=\hat  r^{-1}\rangle$. 
\end{definition}

Using the expression \eqref{Eq: quotient map of S4} of the quotient map $\pi: \mb S^4\to \mb S^4/S^1$, one verifies that 
\[\pi\circ \hat  r = r\circ \pi, \qquad {\rm and}\qquad \pi \circ \hat  s= s\circ \pi.\]
Hence, the modified reflectional $D_n$-action on $\mb S^4$ is the lift of the $D_n$-action on $\mb S^4/ S^1$ defined in \eqref{Eq: Dn reflectional action on S4/S1}. 
Additionally, using \eqref{Eq: suspended Hopf action}, \eqref{Eq: lift antipodal map}, and \eqref{Eq: Dn reflectional action on S4}, we have 
\begin{align}\label{Eq: relation of the lifted actions}
	\hat  r\circ R_\theta = R_\theta \circ \hat  r, \qquad \hat  r\circ \sigma= R_{\frac{2\pi}{n}}\circ\sigma\circ\hat  r, \qquad \hat  s\circ R_\theta= R_{-\theta}\circ \hat  s, \qquad \hat  s\circ \sigma = \sigma\circ\hat  s. 
\end{align}
Therefore, we can take the homomorphism
\[ \psi:D_n\to {\rm Aut}(G), \qquad \psi(\hat  s^a \hat  r^b): [(e^{i\theta}, \sigma^m)]\in G\mapsto [(e^{(-1)^a(\theta+\frac{2\pi mb}{n} ) i} , \sigma^m )]\in G,\]
where $m\in \{0,1\}$ and $\theta\in \mb R/ 2\pi \mb Z$; 
and define the group $ G_n$ by 
\begin{align}\label{Eq: Gn action on S4}
	G_n := G\rtimes_\psi D_n
\end{align}
with the group operation 
\[ (g_1,\hat  s^{a_1}\hat  r^{b_1})\cdot (g_2,\hat  s^{a_2}\hat  r^{b_2}) := (g_1\cdot \psi(\hat  s^{a_1}\hat  r^{b_1})(g_2), \hat  s^{a_1}\hat  r^{b_1}\hat  s^{a_2}\hat  r^{b_2}), \]
where $g_1,g_2\in G=(S^1\rtimes \mb Z_4)/\mb Z_2$ and $\hat  s^{a_1}\hat  r^{b_1}, \hat  s^{a_2}\hat  r^{b_2}\in D_n$. 
One verifies that the above definitions are all well-defined. 

Finally, by \eqref{Eq: suspended Hopf action}\eqref{Eq: lift antipodal map}\eqref{Eq: Dn reflectional action on S4} and \eqref{Eq: relation of the lifted actions}, the group $G_n$ acts by isometries on $\mb S^4$ via the maps
\[ ([(e^{i\theta},\sigma^m)], \hat  s^a\hat  r^b):= R_\theta\circ\sigma^m\circ \hat  s^a\circ \hat  r^b ~: ~\mb S^4\to \mb S^4, \]
for all $([(e^{i\theta},\sigma^m)], \hat  s^a \hat r^b)\in G_n$ with $m\in \{0,1\}$. 
Since the $G_n$-action on $\mb S^4$ reduces to the orientation-preserving $\mb Z_2\times D_n$-action on $\mb S^4/S^1$, there is no special exceptional $G_n$-orbit on $\mb S^4$ (\cite{wang2026spherical}*{Definition 2.2}).

\subsection{Expressions using quaternions}	

Using quaternions, we obtain simpler expressions for the above group actions. 
More precisely, let 
\[ \mb  H = \{a + bi + cj + dk: a, b, c, d\in \mb R\} \]
be the space of quaternions, where the symbols $i,j,k$ satisfy $i^2 = j^2 = k^2 =-1$, $ij = -ji = k$, $jk = -kj = i$, and $ki = -ik = j$. 
Then, by identifying 
\[ (z_1,z_2)\in \mb C^2  \quad {\rm and }\quad q=z_1+z_2j\in \mb H,\]
we can reparameterize $\mb S^4$ as
\begin{align}\label{Eq: parameterize S4 using quaternion}
	\mb S^4:=\{(q,x)\in \mb H\times \mb R: |q|^2+x^2=1\},
\end{align}
and the suspended Hopf action on $\mb S^4$ can be expressed as 
\[ e^{i\theta}:  (q,x)\in \mb S^4 \mapsto (e^{i\theta}\cdot q, x)\in \mb S^4, \qquad \forall e^{i\theta}\in S^1=\{\lambda\in \mb C\subset\mb H: |\lambda|=1\}. \]
Additionally, one verifies that the lift $\sigma$ \eqref{Eq: lift antipodal map} of the antipodal map $A$ can be written as 
\[\sigma : (q,x)\in\mb S^4  \mapsto (jq, -x)\in \mb S^4 .\]
Consider the pin group 
\begin{align}\label{Eq: pin group}
	{\rm Pin}_-(2):= S^1\cup S^1\cdot j \subset \mb H
\end{align}
with the group operation given by the multiplication of quaternions. 
Then, we see that $S^1=\{e^{i\theta}\in \mb C\subset \mb H\}$ and $\sigma$ generate a ${\rm Pin}_-(2)$-action on $\mb S^4$: 
\[e^{i\theta} j^m~:~ (q,x)\in \mb S^4 ~\mapsto ~(e^{i\theta}j^m q, (-1)^m x)\in \mb S^4 \]
for any $e^{i\theta}j^m \in {\rm Pin}_-(2)$. 
As we pointed out in \cite{wang2026spherical}*{Remark 7.5}, ${\rm Pin}_-(2)$ is isomorphic to $G=(S^1\rtimes\mb Z_4)/\mb Z_2$ via the map $e^{i\theta}j^m\in {\rm Pin}_-(2)\mapsto [(e^{i\theta}, \sigma^m)]\in G$.

Next, the modified reflectional $D_n$-action on $\mb S^4$ in Definition \ref{Def: reflectional dihedral action on S4} can also be rewritten using the quaternion coordinates. 
Specifically, we can write
\[ \hat  r: (q,x)\in \mb S^4\mapsto (e^{\frac{\pi}{n}i} \cdot q\cdot e^{\frac{\pi}{n}i}, x)\in \mb S^4
	\quad{\rm and }\quad 
	\hat  s: (q,x)\in \mb S^4\mapsto (j q j^{-1}, -x)\in \mb S^4,\]
which coincides with the action in \eqref{Eq: Dn reflectional action on S4}. 
Let $\psi: D_n\to {\rm Aut}({\rm Pin}_-(2))$ be the homomorphism defined by $\psi(\hat  s^a\hat  r^b): e^{i\theta} j^m\in   {\rm Pin}_-(2) \mapsto e^{(-1)^a(\theta+\frac{2\pi mb}{n} )i} j^m \in   {\rm Pin}_-(2)$, where $\theta\in \mb R/2\pi\mb Z$ and $m\in\{0,1\}$. 
Then the group 
\begin{align}\label{Eq: G_n action using quaternions}
	G_n:= {\rm Pin}_-(2)\rtimes_\psi D_n
\end{align}
acts on $\mb S^4$ by isometries: for $\theta\in \mb R/2\pi\mb Z$, $m,a\in\{0,1\}$, and $b\in \{0,1,\dots,n-1\}$,
\[ (e^{i\theta} j^m, \hat  s^a\hat  r^b): (q,x)\in \mb S^4 \mapsto (e^{i\theta}\cdot j^{m+a}\cdot e^{\frac{b\pi}{n}i} \cdot q \cdot  e^{\frac{b\pi}{n}i} \cdot j^{-a}, (-1)^{m+a}\cdot x)\in \mb S^4 ,\]
which recovers the isometric $G_n$-action on $\mb S^4$ defined in \eqref{Eq: Gn action on S4}.

\section{Notations in the equivariant min-max theory}\label{Sec: min-max}	
	
In this section, we collect some definitions in the equivariant min-max theory (cf. \cite{wang2026spherical}). 

\begin{definition}
    Let the compact Lie group $G_n$ act isometrically on $\mb S^4$ as defined in \eqref{Eq: Gn action on S4}. 
    A family of closed $3$-rectifiable subsets $\{\wti\Sigma_t\}_{t\in [0,1]}$ is said to be a {\em $G_n$-equivariant sweepout} of $\mb S^4$ if 
    \begin{itemize}
        \item for any $t\in [0,1]$, $\wti\Sigma_t$ is $G_n$-invariant, i.e. $g\cdot \wti\Sigma_t=\wti\Sigma_t$ for all $g\in G_n$;
        \item for any $t\in (0,1)$, $\wti\Sigma_t\subset \mb S^4$ is an embedded closed $G_n$-hypersurface;
        \item the map $t\mapsto\wti\Si_t$ is continuous in the smooth topology for $t\in (0,1)$, and is continuous in the varifold (weak) topology and the Hausdorff topology for $t\in [0,1]$. 
    \end{itemize}
\end{definition}

The above definition is a simplified version of \cite{wang2026spherical}*{Definition 6.1} for general groups. 
One can indeed allow every $\wti\Sigma_t$ to have a finite set $P(t)$ of singular orbits with $\sup_{t\in (0,1)}\# P(t)<\infty$ so that $\wti\Sigma_t$ varies smoothly away from $P(t)$ (cf. \cite{wang2026spherical}*{Remark 6.2}). 
Additionally, in \cite{wang2026spherical}*{Definition 6.1}, the $G$-invariant hypersurfaces $\wti \Sigma_t$, $t\in (0,1)$, are also required to be of {\em locally $G$-boundary-type}, i.e. in any small neighborhood $B_r(G\cdot u)$, $\wti\Sigma_t\llcorner B_r(G\cdot u)=\bd \wti \Omega$ for some open $G$-set $\wti\Omega\subset B_r(G\cdot u)$, which is used in the regularity theory \cite{wang2026spherical}*{\S 6.3}. 
Nevertheless, for the specific action of $G=G_n$ in \eqref{Eq: Gn action on S4}, we can remove this assumption by combining the regularity results of \cite{wang2026spherical}*{Theorem 6.5} and \cite{ketover2016free}*{Theorem 3.2} (see the proof of Theorem \ref{Thm: new minimal hypersurfaces of odd n}).

\begin{definition}
    Given a $G_n$-equivariant sweepout $\{\wti \Sigma_t\}_{t\in [0,1]}$, we define the $G_n$-equivariant $([0,1],\{0,1\})$-homotopy class $\Pi$ of $\{\wti\Sigma_t\}_{t\in [0,1]}$ by the set 
    $\Pi:=\{ \{\psi(t,\wti\Sigma_t)\}_{t\in [0,1]}  \}$,
    where $\psi:[0,1]\times \mb S^4\to \mb S^4$ is any smooth map with $\psi(0,\cdot)=\psi(1,\cdot)=id_{\mb S^4}$ so that for any $t\in (0,1)$, $\psi(t,\cdot)=\varphi_1^{(t)}$ for some $G_n$-equivariant isotopy $\{\varphi_s^{(t)}\}_{s\in [0,1]}$ of $\mb S^4$. 
    Then, the {\em min-max width $\mf L(\Pi)$} of $\Pi$ is defined by 
    \[\mf L(\Pi):= \inf_{\{\wti\Sigma'_t\}_{t\in[0,1]} \in \Pi} \sup_{t\in [0,1]} \mc H^3(\wti\Sigma_t'). \]
    Additionally, a sequence $\{\{\wti\Sigma_t^i\}_{t\in [0,1]}\}_{i\in\mb N}\subset \Pi$ is called a {\em minimizing sequence} if 
	\[ \mf L(\{\wti\Sigma_t^i\}_{t\in [0,1]}) := \sup_{t\in [0,1]} \mc H^3_{g_{\mb S^4}}(\wti\Sigma_t^i) ~\to~  \mf L(\Pi)\qquad \mbox{as $i\to\infty$}.  \]
	Given a minimizing sequence $\{\{\wti\Sigma_t^i\}_{t\in [0,1]}\}_{i\in\mb N}\subset \Pi$, a subsequence $\{\wti\Sigma_{t_j}^{i_j}\}_{j\in\mb N}$ with $t_j\in [0,1]$ is called a {\em min-max sequence} if $\mc H^3_{g_{\mb S^4}}(\wti \Sigma_{t_j}^{i_j}) ~\to~ \mf L(\Pi)$ as $ j\to\infty$. 
\end{definition}

If $\mf L(\Pi)>\max\{ \mc H^3_{g_{\mb S^4}}(\wti\Sigma_0), \mc H^3_{g_{\mb S^4}}(\wti\Sigma_1), 0 \}$ (cf. \cite{wang2026spherical}*{(6.1)}), then we can apply the equivariant min-max theory \cite{wang2026spherical}*{Theorem 6.5, 6.6} to find a min-max sequence $\{\wti\Sigma_{t_j}^{i_j}\}_{j\in\mb N}$ so that $|\wti\Sigma_{t_j}^{i_j} |$ converges to a stationary $G_n$-invariant varifold $V$ with certain regularity and topological controls on $\spt(\|V\|)$. 
Here and below, we use the following notations in geometric measure theory:
\begin{itemize}
    \item $|\wti\Sigma|$: the integer rectifiable varifold induced by a hypersurface $\wti\Sigma$;
    \item $\llbracket \wti\Sigma\rrbracket$: the mod $2$ flat chain associated with a hypersurface $\wti\Sigma$;
    \item $\mc C^{G_n}(\mb S^4)$: the space of $G_n$-invariant Caccioppoli sets $\wti\Omega$ in $\mb S^4$. 
\end{itemize}
We say that $\Phi:t\in [0,1]\mapsto\llbracket \wti\Sigma_t\rrbracket$ is a {\em (non-trivial) $1$-sweepout in the sense of Almgren-Pitts} if there exists a continuous family of Caccioppoli sets $\{\wti\Omega_t\}_{t\in [0,1]}$ with induced boundary $\bd\wti\Omega_t=\llbracket \wti\Sigma_t\rrbracket$ so that $\wti\Omega_1=\mb S^4\setminus \wti\Omega_0$, which implies that $\mf L(\Pi)>0$ by Almgren's isomorphism \cite{almgren1962homotopy}. 

Finally, we mention that the definitions and notations in this section can also be made in the orbit space $\mb S^4/S^1$ by using $\mb S^4/S^1$ and $\mb Z_2\times D_n$ in place of $\mb S^4$ and $G_n$ respectively.

\section{\texorpdfstring{Embedded minimal principal $S^1$-bundles in $\mb S^4$}{Embedded minimal principal S1-bundles in S4}}\label{Sec: new minimal hypersurfaces in S4}

In Section \ref{Sec: group actions on S4}, we defined the suspended $S^1$-action on $\mb S^4$ and its extensions $G={\rm Pin}_-(2)$ and $G_n={\rm Pin}_-(2)\rtimes D_n$ through the antipodal $\mb Z_2$-action and the modified reflectional $D_n$-action. 
In this section, we use these symmetries and the equivariant min-max theory \cite{wang2026spherical}*{Theorem 1.5}\cite{ketover2016equivariant}*{Theorem 1.3} to construct embedded minimal principal $S^1$-bundles in $\mb S^4$. 

\subsection{\texorpdfstring{New minimal $G_n$-hypersurfaces in $\mb S^4$ for even $n$}{New minimal Gn-hypersurfaces in S4 for even n}}\label{Subsec: new minimal hypersurface for even n}

Fix any even integer 
\[ n =2k \geq 2.\]
In this subsection, we will construct a family of minimal $G_n$-hypersurfaces in $\mb S^4$ that are topologically trivial principal $S^1$-bundles. 
We begin with the construction of a $G_n$-sweepout. 

\begin{lemma}\label{Lem: Gn-sweepout n-even}
	For any even integer $n=2k\geq 2$, there is a $G_n$-sweepout $\{\wti\Sigma_t\}_{t\in [0,1]}$ in $\mb S^4$ so that 
	\begin{itemize}
		\item[(i)] $\wti\Sigma_0=\cup_{b=1}^k \pi^{-1}( \Upsilon_{2b-1} )$, where each $ \Upsilon_{2b-1}$ is a great circle on $E_0/S^1$ (see  \eqref{Eq: circle on E0/S1});
		\item[(ii)] $\wti\Sigma_1=\cup_{b=1}^k\pi^{-1} (\Xi_{2b})$, where each $\Xi_{2b}$ is the circle on $\mb S^4/S^1$ corresponding to the $\xi_{2b}$-axis in $\mb R^3$ via the stereographic projection (see \eqref{Eq: axes on S3});
		\item[(iii)] for $t\in (0,1)$, $\wti\Sigma_t\subset \mb S^4$ is an embedded $G_n$-hypersurface with $\wti \Sigma_t=\bd \wti \Omega_t$ for some open $G_n$-set $\wti\Omega_t\subset \mb S^4$ so that $\wti\Sigma_t/{\rm Pin}_-(2)$ is an orientable surface in $RP^3=\mb S^4/{\rm Pin}_-(2)$ and 
		\[ \genus(\wti\Sigma_t/S^1) = n-1 = 2k-1; \]
		\item[(iv)] $\sup_{t\in [0,1]} \mc H^3_{g_{\mb S^4}}(\wti\Sigma_t) < 2 \mc H^3_{g_{\mb S^4}}(E_0)=4\pi^2$.
	\end{itemize}
	Additionally, $\Phi(t):=\llbracket \wti\Sigma_t\rrbracket$, $t\in [0,1]$ forms an Almgren-Pitts $1$-sweepout. 
\end{lemma}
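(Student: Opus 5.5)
The plan is to build the sweepout downstairs in the orbit space $\mb S^4/S^1$ as a one-parameter family of $\mb Z_2\times D_n$-invariant open sets $\Omega_t$, and then set $\wti\Omega_t:=\pi^{-1}(\Omega_t)$ and $\wti\Sigma_t:=\bd\wti\Omega_t$. Since the $G_n$-action reduces to the $\mb Z_2\times D_n$-action on $\mb S^4/S^1$ (with $\mb Z_2=\langle A\rangle$), $G_n$-invariance upstairs follows, and by \eqref{Eq: area in S4/S1} all areas can be computed as weighted areas $\mc H^2_{\wti g_{\mb S^4/S^1}}$ of $\bd\Omega_t$.

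\textbf{Spines and the Heegaard picture.} Let $\mc Y:=\cup_{b=1}^k\Upsilon_{2b-1}$ and $\mc X:=\cup_{b=1}^k\Xi_{2b}$. Both are invariant under $r$ (which sends $\Upsilon_l\mapsto\Upsilon_{l+2}$ and $\Xi_l\mapsto\Xi_{l+2}$), under $s$ (which sends index $l\mapsto n-l$, preserving parity because $n$ is even), and under $A$.
\begin{itemize}
\item $\mc Y$ is a graph with two vertices $(0,\pm\tfrac12,0)$ and $2k$ edges, so it has rank $2k-1$.
\item $\mc X$ is a graph with two vertices (the poles) and $2k$ edges, so it also has rank $2k-1$.
\end{itemize}
The two graphs are disjoint. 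The complement of a thin tube around one graph retracts onto the other, so together they give a genus $n-1$ Heegaard splitting of $\mb S^4/S^1\cong S^3$. Concretely, $E_0/S^1$ minus $n$ small disks centered at the points $\mc X\cap E_0/S^1$ (angles $2b\pi/n$) deformation retracts equivariantly onto $\mc Y$, whose circles sit at the odd angles.

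\textbf{The family $\Omega_t$.} I would construct $\Omega_t$ in three phases.
\begin{enumerate}
\item \emph{Phase 1.} $\Omega_t$ is a shrinking thin equivariant neighborhood, inside $\{|x|<h_0\}$, of the planar domain $E_0/S^1\setminus(\text{$\varepsilon$-disks around }\mc X)$. Let this planar domain retract onto $\mc Y$ as $t\to0$, so that $\wti\Sigma_0=\pi^{-1}(\mc Y)$.
\item \emph{Phase 2.} $\Omega_t=\{|x|<h\}\setminus T_\varepsilon(\mc X)$ with $h$ increasing from $h_0$ to $1-\delta$. Here $\bd\Omega_t$ is the picture of two parallel spheres $\{x=\pm h\}$ joined by $n$ necks around $\mc X\cap\{|x|<h\}$.
\item \emph{Phase 3.} Take the complement of tubes $T_{\varepsilon(t)}(\mc X)$ and let $\varepsilon(t)\to0$, so that $\wti\Sigma_1=\pi^{-1}(\mc X)$.
\end{enumerate}
Corners are smoothed equivariantly, for example by using level sets of an averaged smoothing of $\max(|x|-h,\ \varepsilon-d(\cdot,\mc X))$.

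\textbf{Properties (i)--(iii) and the $1$-sweepout.} Each interior $\bd\Omega_t$ is a handlebody boundary of genus $n-1$, which gives (iii) for the genus. The surfaces stay away from the two poles, so the surface $\wti\Sigma_t$ upstairs is smooth. Since $\Omega_t$ is $A$-invariant, $(\bd\Omega_t)/A=\bd(\Omega_t/A)$ in the orientable $RP^3$, hence it is two-sided and therefore orientable. As for the Almgren--Pitts property, $\wti\Omega_t$ runs from $\emptyset$ (at $t=0$, up to a null set) to $\mb S^4$ minus a null set (at $t=1$). This gives $\wti\Omega_1=\mb S^4\setminus\wti\Omega_0$ as Caccioppoli sets, and continuity in the flat, varifold and Hausdorff topologies is clear from the construction.

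\textbf{Main obstacle: the area bound (iv).} I expect (iv) to be the hard step, and I would estimate it phase by phase.
\begin{itemize}
\item \emph{Phase 2.} The area is at most
\[
2\cdot 2\pi^2(1-h^2)^{3/2}-c\,n\varepsilon^2+C\,n\,\varepsilon h .
\]
Here $2\pi^2(1-h^2)^{3/2}=\mc H^3(\pi^{-1}\{x=\pm h\})$. The removed disks save weighted area roughly $c\varepsilon^2$ each, since $V\approx2\pi$ near $E_0$. The necks cost at most $C\varepsilon h$ each.
\item \emph{Choice of parameters.} Choosing $h_0\ll\varepsilon$ makes the bound strictly less than $4\pi^2$ for small $h$. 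For larger $h$, the term $(1-h^2)^{3/2}$ gives a definite gain, which absorbs the neck term provided $\varepsilon$ is fixed small.
\item \emph{Phase 1.} The surfaces are doubles of subdomains of the punctured equator, so their area is at most $2(2\pi^2-cn\varepsilon^2)+O(h_0)<4\pi^2$.
\item \emph{Phase 3.} The surfaces are thin tubes around $\mc X$, with area $O(\varepsilon)$.
\end{itemize}
The delicate point is the transition regime $h\sim\varepsilon$ in Phase 2, where the neck area must be controlled uniformly. There I would first try writing the necks as graphs over the $\varepsilon$-circles and using the explicit formula \eqref{Eq: metric on S4/S1} for $\wti g_{\mb S^4/S^1}$, following the estimate of \cite{wang2026spherical}*{\S 7.2}.
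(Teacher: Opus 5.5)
Your topological bookkeeping is sound and matches the paper: $\mc X$ and $\mc Y$ are $\mb Z_2\times D_n$-invariant because $n$ is even, and every interior $\bd\Omega_t$ bounds a genus $n-1$ handlebody avoiding the poles. Also $\bd(\Omega_t/A)$ is two-sided, hence orientable in $RP^3$, and the $1$-sweepout property is clear. The gap is (iv), and it cannot be closed by estimating more carefully: your Phase~2 family actually violates the bound.

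Smooth the corners at a scale $\ll\varepsilon$. Then your own accounting is sharp to leading order with $c=4\pi^2$, since there are $2n$ removed disks of $g_{\mb S^4/S^1}$-radius $\varepsilon$, each of weighted area $\approx 2\pi\cdot\pi\varepsilon^2$. Likewise $C=8\pi^2$, since each of the $n$ necks has weighted area $\approx 2\pi\cdot 2\pi\varepsilon\cdot 2h$. Hence
\[
\mc H^3_{g_{\mb S^4}}(\wti\Sigma_t)-4\pi^2=\pi^2\big(-6h^2+8n\varepsilon h-4n\varepsilon^2\big)+O\big((h+\varepsilon)^3\big).
\]
At $h=\varepsilon$ the main term equals $(4n-6)\pi^2\varepsilon^2>0$ for every $n\geq 2$. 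Since $\varepsilon$ is fixed while $h$ increases continuously from $h_0\ll\varepsilon$, this value of $h$ is reached.

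Coupling $\varepsilon$ to $h$ does not help as long as the sheets stay flat up to cylindrical necks. For small $h,\varepsilon$ the sign of the main term depends only on $\varepsilon/h$. Your Phase~1 regime ($\varepsilon/h_0\gg 1$) and the later regime ($\varepsilon/h\to 0$) lie on opposite sides of the bad ratio $\varepsilon/h=1$, so any continuous path between them crosses it.

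The missing ingredient is the catenoid estimate of \cite{ketover2020catenoid}, which is exactly what the paper invokes.
\begin{itemize}
\item Because $(\mathring{\mb S}^4/S^1,\wti g_{\mb S^4/S^1})$ has positive Ricci curvature (Lemma~\ref{Lem: properties of S1-action}(ii)), $E_0/S^1$ is unstable. Pushing the two sheets to $x=\pm t$ therefore gains $\approx 6\pi^2t^2$.
\item Join the sheets by catenoidal necks of waist much smaller than $t$ (of order $t/|\log t|$), glued in by a logarithmic cutoff so that the sheets bend toward each other near the necks. This costs only $o(t^2)$ in total, so the lifted surfaces have area $<4\pi^2-Ct^2$ for small $t$.
\item This family degenerates to $E_0/S^1$ with multiplicity two as $t\to 0$. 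The paper then opens the necks onto $\cup_{b=1}^k\Upsilon_{2b-1}$ by composing with an equivariant retraction $R(T(t),\cdot)$ of the punctured lunes, with $T(t)=\max(1-t/\delta,0)$. The area stays below $4\pi^2$ as in \cite{haslhofer2019sphere}.
\item Away from $t=0$, the paper's surfaces $\Gamma_{t,\epsilon_0(t)}$, with $\epsilon_0(t)\to 0$ as $t\to 1$, play the role of your Phases~2 and~3.
\end{itemize}
So the passage between the doubled punctured equator and two separated sheets must go through these bent, logarithmically cut-off necks. It cannot go through flat sheets with necks whose radius is comparable to the sheet separation, which is what your Phase~1 at fixed hole radius forces.
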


\begin{proof}
	It is sufficient to construct the corresponding $\mb Z_2\times D_n$-equivariant sweepout $\{\Sigma_t\}_{t\in [0,1]}$ in $\mb S^4/S^1$. 
	Recall that $\Xi_l^\pm$, $\widehat \Xi_l^\pm$, $\Xi_l$, and $\widehat \Xi_l$ are defined in \eqref{Eq: half axis xi on S3}-\eqref{Eq: axes on S3} respectively for $l\in\{1,\dots, n\}$, and $\mb S^4/S^1$ is parameterized as the ellipsoid in \eqref{Eq: parameterize S4/S1}. 
	
	For any small $\epsilon>0$, let $B_\epsilon(\Xi_l)$ be a small $\epsilon$-neighborhood of the circle $\Xi_l$ in $\mb S^4/S^1$ , where $l=1,\dots, n$. 
	Since the circle $\Xi_l$ is transversal to each slice
    \begin{align}\label{Eq: slict at x=rho}
        S_\rho:= \{x=\rho\}\cap \mb S^4/S^1=\{(w,t,\rho)\in \mb C\times \mb R\times \mb R: 4(|w|^2+t^2)+\rho^2=1 \}, \quad \forall\rho\in (-1,1),
    \end{align}
	we see that $\bd B_\epsilon(\Xi_l)$ is transversal to $S_\rho$ by taking $\epsilon>0$ small enough. 
	Additionally, there exists a continuous function $\epsilon_0: [0,1] \to [0, \infty)$ so that
	\begin{itemize}
		\item $\lim_{|\rho|\to  1}\epsilon_0(|\rho|) = 0$, $\lim_{|\rho|\to 0} \epsilon_0(|\rho|) = \tilde\epsilon_0>0$; 
		\item for any $\epsilon\in (0, 2\epsilon_0(|\rho|))$ and $l\neq l'\in \{1, \dots, n\}$, $B_\epsilon(\Xi_l)\cap S_\rho$ is the disjoint union of two disks, and $S_\rho\cap B_\epsilon(\Xi_l)\cap B_\epsilon(\Xi_{l'})=\emptyset$. 
	\end{itemize}
	Therefore, using the parameterization \eqref{Eq: parameterize S4/S1}, we can define a connected surface $\Gamma_{\rho,\epsilon}$ in $\mb S^4/S^1$ for $\rho\in (0,1)$ and $\epsilon\in (0,2\epsilon_0(\rho))$ by
    \begin{align}\label{Eq: connect 2 spheres by n-1 necks}
        \Gamma_{\rho,\epsilon}:= [(S_\rho \cup S_{-\rho})\setminus \cup_{b=1}^k B_\epsilon(\Xi_{2b})] \cup \{(w,t,x)\in \cup_{b=1}^k\bd B_\epsilon(\Xi_{2b}): -\rho<x<\rho \},
    \end{align}
	which is formed by two spheres $S_\rho \cup S_{-\rho}$ parallel to the equator $E_0/S^1=S_0$ that are connected via $n=2k$ necks along $\cup_{b=1}^k\Xi_{2b}$ (see the left panel of Figure \ref{Fig: sweepout n-even} for the top view under the projection $P$ \eqref{Eq: stereographic projection}). 
	\begin{figure}[h]
    \centering
    \includegraphics[height=1.75in]{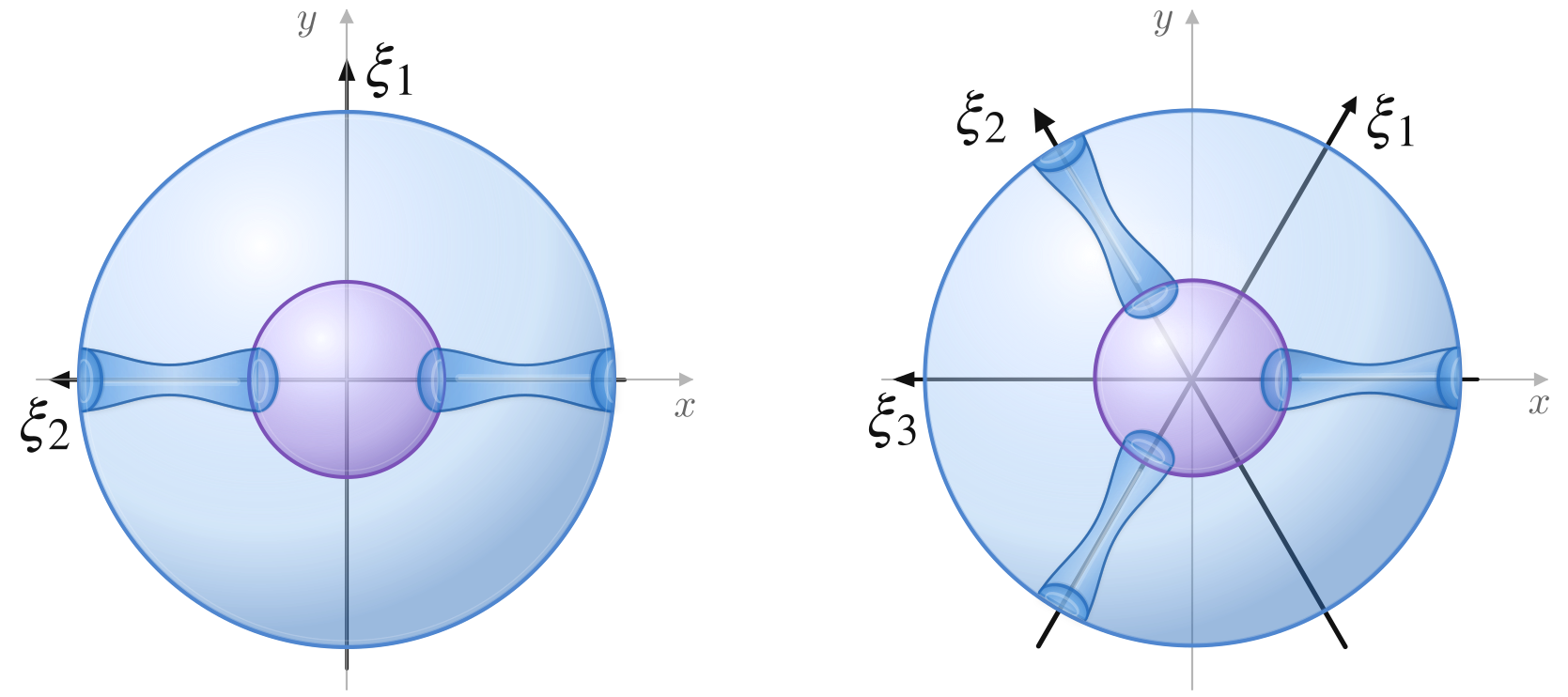}
    \caption{$\mb Z_2\times D_n$-invariant $\Gamma_{\rho,\epsilon}$ ($n=2$), and $D_n$-invariant $\widehat\Gamma_{\rho,\epsilon}$ ($n=3$).}
    \label{Fig: sweepout n-even}
    \end{figure}

	Recall that $n=2k$ is even. 
	Hence, $\cup_{b=1}^k\xi_{2b}$ (see \eqref{Eq: Dn reflectional axis}) is invariant under the reflectional $D_n$-action \eqref{Eq: Dn reflectional action on R3} on $\mb R^3$. 
	Then, one can see from \eqref{Eq: r action using stereographic projection} and \eqref{Eq: s action using stereographic projection} that $\cup_{b=1}^k \Xi_{2b}$ is $\mb Z_2\times D_n$-invariant, which implies that $\Gamma_{\rho,\epsilon}$ is $\mb Z_2\times D_n$-invariant. 
	Additionally, by the constructions, we have 
	\begin{itemize}
		\item $\genus(\Gamma_{\rho,\epsilon})=n-1=2k-1$;
		\item $\Gamma_{\rho,\epsilon}$ leaves the north and south poles $(0^{(2)},0,\pm1)$ of $\mb S^4/S^1$ on the same side. 
	\end{itemize}
	Since the maps $A$ and $sr^l\in D_n$ on $\mb S^4/S^1$ are orientation-preserving and interchange the north/south poles of $\mb S^4/S^1$, we see from the second bullet that $A$ and $sr^l$ also preserve the orientation of $\Gamma_{\rho,\epsilon}$. 
	Thus, $\Gamma_{\rho,\epsilon}/\langle A\rangle \subset RP^3$ 
    is an orientable closed surface.

	Consider the family $\{\Sigma_t'\}_{t\in (0,1)}$ of $\mb Z_2\times D_n$-invariant surfaces given by 
	\[ \Sigma_t':= \Gamma_{t, \epsilon_0(t) }. \]
	As $t\to 1$, $\Sigma_t'$ tends to the union of circles $\cup_{b=1}^k \Xi_{2b}$. 
	For $t$ near $0$, since $(\mathring{\mb S}^4/S^1, \wti g_{\mb S^4/S^1})$ has positive Ricci curvature (Lemma \ref{Lem: properties of S1-action}), it follows from the catenoid estimates in \cite{ketover2020catenoid} that  one can modify the necks equivariantly so that 
	\[ \mc H^3_{g_{\mb S^4}}(\pi^{-1}(\Sigma_t')) < 2 \mc H^3_{g_{\mb S^4}}(E_0) - Ct^2, \] 
	for some $C>0$. 
	Indeed, 
	the above construction and estimate are similar to \cite{haslhofer2019sphere}*{(10.4)}. 
	As $t\to 0$, $\Sigma_t'$ tends to the punctured sphere $(E_0/S^1)\setminus (\cup_{b=1}^k B_{\tilde\epsilon_0}(\Xi_{2b}))$ with multiplicity two. 
	Hence, we shall modify the surface $\Sigma_t'$ for $t$ near $0$ by opening the necks. 
	
	Specifically, the union of circles $\cup_{b=1}^k \Upsilon_{2b-1}$ separates the equator $E_0/S^1=\{x=0\}\cap \mb S^4/S^1$ into $n=2k$ lunes $L_1,\dots,L_n$ with closures $\{\overline L_l\}_{l=1}^n$ , each of which contains a single point of $\{p_1,\dots,p_n\}=(E_0/S^1)\cap (\cup_{b=1}^k \Xi_{2b})$. 
	Since every $\overline L_l\setminus p_l$ is topologically a punctured disk, we can take a $\mb Z_2\times D_n$-equivariant retraction $R$ on $(E_0/S^1)\setminus\{p_1,\dots, p_n\} =\cup_{l=1}^n (\overline L_l\setminus \{p_l\}) $: 
	\[R: [0,1]\times (\cup_{l=1}^n (\overline L_l\setminus p_l ))\to \cup_{l=1}^n ( \overline L_l\setminus \{p_l\} )\] 
	so that $R(t,\cdot)$ is smooth for $0\leq t<1$, 
	\begin{itemize}
		\item $R(0,\cdot )=id$, and 
		\item $R(1, \overline L_l\setminus \{p_l\} )= \bd L_l \subset \cup_{b=1}^k  \Upsilon_{2b-1}$, for $1\leq l\leq n$,
	\end{itemize}
	which can be further extended $\mb Z_2\times D_n$-equivariantly into a neighborhood of $E_0/S^1\subset \mb S^4/S^1$. 
	Next, for a fixed small $\delta>0$, we can modify $\Sigma_t'$, $t\in (0,1)$, by opening the neck via $R$ to obtain
	\[\Sigma_t:= R(T(t), \Sigma_t')\]
	where $T(t):=\max(1-t/\delta, 0)$. 
	Note that as $t\to 0$ and $t\to 1$, $\Sigma_t$ tends to $\Sigma_0:=\cup_{b=1}^k  \Upsilon_{2b-1}$ and $\Sigma_1:=\cup_{b=1}^k\Xi_{2b}$ respectively. 
	Similar to \cite{haslhofer2019sphere}*{(10.5)}, we still have $\mc H^3_{g_{\mb S^4}}(\pi^{-1}(\Sigma_t)) < 2 \mc H^3_{g_{\mb S^4}}(E_0)$. 
	
	Finally, by smoothing and taking $\wti\Sigma_t:=\pi^{-1}(\Sigma_t)$, we have the desired $\{\wti\Sigma_t\}_{t\in [0,1]}$. 
	One easily sees from the constructions that $\Phi(t):=\llbracket \wti\Sigma_t\rrbracket$ is an Almgren-Pitts $1$-sweepout, and $\wti\Sigma_t=\bd \wti\Omega_t$ for $t\in (0,1)$ and some open $G_n$-set $\wti\Omega_t\subset \mb S^4$. 
\end{proof}

Given an even integer $n=2k\geq 2$ and the $G_n$-sweepout $\{\wti \Sigma_t\}_{t\in [0,1]}$ constructed in Lemma \ref{Lem: Gn-sweepout n-even}, let $\Pi_n$ be the $G_n$-equivariant $([0,1], \{0,1\})$-homotopy class of $\{\wti\Sigma_t\}_{t\in [0,1]}$ in $\mb S^4$. 
\begin{proposition}\label{Prop: new minimal hypersurface of even n}
	Using the above notations, 
	\[ 0< \mf L(\Pi_n) < 2\mc H^3(E_0),\] 
	and $\mf L(\Pi_n)$ is realized by the area of an embedded $G_n$-invariant minimal hypersurface $\Gamma_n\subset \mb S^4$ with multiplicity one such that 
	\[\genus(\Gamma_n/S^1)=n-1,\] 
	and $\Gamma_n=\bd\wti\Omega_n$ for some $G_n$-invariant open set $\wti\Omega_n\subset \mb S^4$. In particular,  $\Gamma_n/{\rm Pin}_-(2)$ is orientable in $RP^3=\mb S^4/{\rm Pin}_-(2)$. 
\end{proposition}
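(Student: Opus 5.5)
The plan is to apply the equivariant min-max theorem of \cite{wang2026spherical}*{Theorem 6.5, 6.6} (with the genus bound of \cite{wang2026spherical}*{Theorem 1.5}) to the class $\Pi_n$ of Lemma \ref{Lem: Gn-sweepout n-even}, and then pin down the topology using the symmetries.

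\textbf{Width bounds.} The upper bound $\mf L(\Pi_n)<2\mc H^3(E_0)$ is immediate from Lemma \ref{Lem: Gn-sweepout n-even}(iv). For the lower bound, $\Phi$ is an Almgren--Pitts $1$-sweepout, so every element of $\Pi_n$ is as well, and Almgren's isomorphism gives $\mf L(\Pi_n)\ge \omega_1(\mb S^4)>0$. Since $\wti\Sigma_0,\wti\Sigma_1$ are preimages of curves in the orbit space, they have zero $3$-dimensional area. Hence the nontriviality condition $\mf L(\Pi_n)>\max\{\mc H^3(\wti\Sigma_0),\mc H^3(\wti\Sigma_1),0\}$ holds.

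\textbf{Regularity.} The min-max theorem then yields a stationary $G_n$-varifold $V=\sum_i m_i|\Gamma^i|$ with $\|V\|(\mb S^4)=\mf L(\Pi_n)$, where each $\Gamma^i$ is a smooth embedded $G_n$-invariant minimal hypersurface. Lemma \ref{Lem: properties of S1-action}(iii) keeps each $\Gamma^i$ away from the poles. There are no special exceptional orbits, and the locally boundary-type condition holds by Lemma \ref{Lem: Gn-sweepout n-even}(iii), so the regularity theory applies.

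\textbf{Multiplicity one and connectedness.} By Frankel's theorem in $\Ric>0$, any two minimal hypersurfaces in $\mb S^4$ intersect, so $\spt\|V\|=\Gamma_n$ is connected. Every embedded minimal hypersurface in $\mb S^4$ has area at least $\mc H^3(E_0)$, so the total multiplicity is $1$, giving $V=|\Gamma_n|$. Being two-sided in $\mb S^4$, $\Gamma_n$ bounds a $G_n$-invariant open set $\wti\Omega_n$. Its ${\rm Pin}_-(2)$-quotient bounds in $RP^3$ and is therefore orientable; equivalently, $A$ preserves the orientation of $\Gamma_n/S^1$ because $A$ is orientation-preserving on the orbit space and preserves $\wti\Omega_n/S^1$.

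\textbf{Genus (main obstacle).} The genus bound of \cite{wang2026spherical}*{Theorem 1.5}, in the spirit of \cite{ketover2019genus}, is a bound on the reduced genus in the quotient $\mb S^4/(S^1\times\cdots)$. Together with the Riemann--Hurwitz formula for the $\mb Z_2\times D_n$-action on $\Gamma_n/S^1$, it should give $\genus(\Gamma_n/S^1)\le n-1$, possibly up to neck-pinch degenerations. To get equality, I would use the fixed-point structure. The axes $\Xi_{2b}$ and the circles $\Upsilon_{2b-1}$ are fixed sets of the order-two elements $sr^{l}$ (Lemma \ref{Lem: fixed points set of s}). Because $\Gamma_n$ sweeps out nontrivially, $\Gamma_n/S^1$ must meet these fixed sets transversally in a prescribed number of points. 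Riemann--Hurwitz then leaves only the values $\genus\in\{n-1\}\cup\{\text{small values}\}$ allowed by the orbifold Euler characteristic. The remaining possibility is genus $0$ (a sphere quotient, e.g.\ $E_0$ or Hsiang-type examples). I would exclude it by combining the area bound $\mf L<2\mc H^3(E_0)$ with the fact that $E_0$ is not in the right symmetric configuration. Alternatively, the orientability of the $A$-quotient, combined with $\Gamma_n/S^1$ not separating the two poles, forces a genus-$0$ surface to separate the poles, which is a contradiction. Pinning down this case exclusion and the Riemann--Hurwitz bookkeeping is where I expect the real work to lie.
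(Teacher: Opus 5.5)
Your width bounds, regularity, connectedness (via Frankel) and multiplicity-one steps agree with the paper. The proposal has two genuine gaps, and the second depends on the first.

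\textbf{Invariance of the two sides.} The claim ``being two-sided in $\mb S^4$, $\Gamma_n$ bounds a $G_n$-invariant open set'' does not follow. Two-sidedness only splits $\mb S^4\setminus\Gamma_n$ into two components, and $G_n$ may permute them. This is exactly what $\sigma$ and $\hat s$ do in the odd case (Proposition~\ref{Prop: new minimal hypersurface}), for a hypersurface produced by the same machinery. The invariance has to be inherited from the sweepout. Each $\wti\Sigma_t/S^1$ bounds two regions that are each $\mb Z_2\times D_n$-invariant. Because the limit has multiplicity one, the argument of \cite{zhou2015positiveRic}*{Proposition 6.1} passes these invariant sets to the limit along a min-max sequence, giving a $G_n$-invariant $\wti\Omega_n$ with $\bd\wti\Omega_n=\Gamma_n$. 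Without this step your orientability argument for $\Gamma_n/{\rm Pin}_-(2)$ is circular, since it assumes $A$ preserves $\wti\Omega_n/S^1$. The same applies to the fact that both poles lie on one side of $\Gamma_n/S^1$: you later simply assume it, but it is a consequence of this invariance, because $A$ interchanges the poles.

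\textbf{Genus equality.} Here your plan does not identify a working mechanism. Nothing forces a ``prescribed number'' of intersections with the circles $\Upsilon_l$, and the $\Xi_{2b}$ are not fixed sets of any $sr^l$. The relevant fixed set is $\Xi_0$, the fixed circle of the rotation $r$. The argument runs as follows.
\begin{itemize}
\item $\Gamma_n/S^1$ meets $\Xi_0$ orthogonally in $m$ points.
\item The $\mb Z_2\times D_n$-orbits on $\Xi_0\cap(\mathring{\mb S}^4/S^1)$ have size $2$ on $E_0/S^1$ and $4$ elsewhere, so $m\equiv 0$ or $2\pmod 4$.
\item The sides alternate along $\Xi_0$, so having both poles on the same side forces $m=4j$.
\item The $\mb Z_n$-action on $\Gamma_n/S^1$ is free off $\Xi_0$ and orientation-preserving (again by side-invariance). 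Riemann--Hurwitz gives $\genus(\Gamma_n/S^1)=n\gamma+(2j-1)(n-1)$ with $\gamma=\genus((\Gamma_n/S^1)/\mb Z_n)$.
\item You also need $\gamma=0$. This comes from the equivariant genus control of the quotient \cite{ketover2016equivariant}*{(1.3)}, since the sweepout surfaces have genus-$0$ $\mb Z_n$-quotients.
\item The genus is then an odd positive multiple of $n-1$, and the upper bound forces it to equal $n-1$.
\end{itemize}

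Your assertion that only genus $0$ remains to be excluded is wrong in both directions.
\begin{itemize}
\item Genus $0$ is excluded automatically once $m\equiv0\pmod 4$ is known. An orientation-preserving $\mb Z_n$-action on a sphere that is free off $\Xi_0$ has exactly two fixed points. Your ``alternative'' is the right idea, but only with this count in place.
\item Without $\gamma=0$, the case $j=0$, $\gamma=1$ survives everything in your plan. This is a torus quotient on which $\mb Z_n$ acts freely, of genus $1<n-1$ for $n\ge 4$.
\end{itemize}
Your appeal to the area bound $\mf L(\Pi_n)<2\mc H^3(E_0)$ and to ``$E_0$ not being in the right configuration'' does not exclude either case.
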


\begin{remark}\label{Rem: valid in symmetric S4 with Ric>0 for n-even}
    By the proof of \cite{wang2026spherical}*{(7.8) and Theorem 7.10} for the multiplicity one, the existence of the above minimal $\Gamma_n$ is indeed valid in any $G_n$-invariant Riemannian $S^4$ with positive Ricci curvature.  
\end{remark}

\begin{proof}
	Since $\{\wti\Sigma_t\}$ generates a $1$-sweepout in the sense of Almgren-Pitts, we see $\mf L(\Pi_n)>0$. 
	By the equivariant min-max theorem (\cite{wang2026spherical}*{Theorem 1.5}) and the Frankel property in the round $\mb S^4$, we have a connected minimal $G_n$-hypersurface $\Gamma_n$ with $m_n\in\mb N$ so that $m_n\cdot \mc H^3(\Gamma_n)=\mf L(\Pi_n)$. 
	
	Note that the totally geodesic equator $E_0$ has the least area among all embedded closed minimal hypersurfaces in $\mb S^4$. 
	Hence, $\mc H^3_{g_{\mb S^4}}(\Gamma_n)
	\geq \mc H^3_{g_{\mb S^4}}(E_0)$, and thus $m_n=1$ by Lemma \ref{Lem: Gn-sweepout n-even} (iv). 
	One can also use the arguments for \cite{wang2026spherical}*{(7.8) and Theorem 7.10} to show $m_n=1$, where $\Ric_{\mb S^4}>0$ is essentially used. 
	
	Additionally, in the $G_n$-sweepout $\{\wti\Sigma_t\}_{t\in [0,1]}$, it follows from Lemma \ref{Lem: Gn-sweepout n-even}(iii) that each $\wti\Sigma_t/S^1$, $t\in (0,1)$, separates $\mb S^4/S^1$ into two regions that are both invariant under the $\mb Z_2\times D_n$-action defined in \eqref{Eq: antipodal} and \eqref{Eq: Dn reflectional action on S4/S1}. 
	Hence, combining $m_n=1$ with the proof of \cite{zhou2015positiveRic}*{Proposition 6.1}, we conclude that $\Gamma_n/S^1=\bd\Omega$ for some $\mb Z_2\times D_n$-set $\Omega\subset \mb S^4/S^1$, which gives the desired $\wti\Omega_n:=\pi^{-1}(\Omega)$. 
    In particular, $\Gamma_n/{\rm Pin}_-(2) = \bd(\Omega /\mb Z_2)$ must be orientable in $RP^3=\mb S^4/{\rm Pin}_-(2)$.

	Next, by the embeddedness of $\Gamma_n$, we see that $\Gamma_n/S^1\subset \mathring{\mb S}^4/S^1$ does not contain the north/south poles of $\mb S^4/S^1$ (Lemma \ref{Lem: properties of S1-action}(iii)). 
	Hence, by Lemma \ref{Lem: Gn-sweepout n-even}(iii) and the weighted genus upper bound for the $\mb Z_2\times D_n$-equivariant min-max in $(\mathring{\mb S}^4/S^1, \wti g_{\mb S^4/S^1})$ (cf. \cite{ketover2016equivariant}*{Theorem 1.3}), we have 
	\begin{align}\label{Eq: genus upper bound for even n in S4/S1}
		\genus(\Gamma_n/S^1) \leq n-1=2k-1.
	\end{align}
	It is now sufficient to show that the equality holds in the above formula.

	To begin with, consider the $\mb Z_2\times D_n$-invariant circle $\Xi_0\subset \mb S^4/S^1$ defined in \eqref{Eq: axes on S3} corresponding to the $z$-axis in $\mb R^3$ via the stereographic projection. 
	Then, by \cite{wang2026spherical}*{Lemma 2.8} and the facts that $\Gamma_n/S^1\subset \mathring{\mb S}^4/S^1$ while $\Xi_0\not\subset \mathring{\mb S}^4/S^1$, we conclude that $\Gamma_n/S^1$ intersects $\Xi_0\cap (\mathring{\mb S}^4/S^1)$ orthogonally at $m\in \mb N$ points. 
	 For any $p\in \Xi_0\cap (\mathring{\mb S}^4/S^1)$, since
	 \begin{align}\label{Eq: number of points in orbits on Xi_0}
	 	\mbox{$\#[(\mb Z_2\times D_n)\cdot p]=2$ if $p\in (E_0/S^1)\cap \Xi_0$, $\quad$ and $\quad$ otherwise $\#[(\mb Z_2\times D_n)\cdot p]=4$},  
	 \end{align}
	 we have $m=4j$ or $m=2+4j$ for some $j\in \{0,1,\dots\}$.
	 Additionally, denote by $\Omega_\pm$ the two $\mb Z_2\times D_n$-invariant components of $(\mb S^4/S^1) \setminus (\Gamma_n/S^1)$. 
	 Then, $\Xi_0$ is separated by $\Gamma_n/S^1 \cap \Xi_0=\{p_i\}_{i=1}^m$ into $m$ arcs $\{\gamma_i^\pm\}_{i=1}^{m/2}$ so that $\Omega_\pm\cap \Xi_0=\cup_{i} \gamma_i^\pm$, and any two adjacent arcs have opposite superscripts, i.e. $\gamma_i^\pm$ is adjacent to some $\gamma_{i'}^\mp$. 
     \begin{figure}[h]
        \centering
        \includegraphics[height=1.2in]{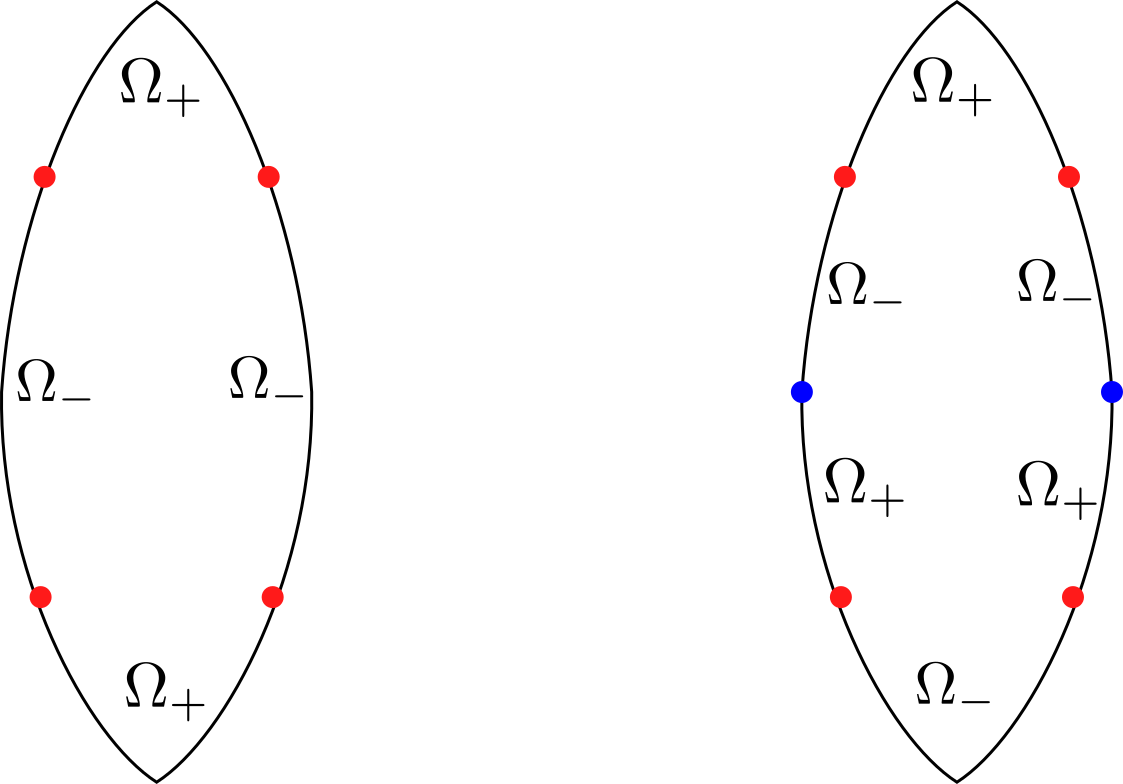}
        \caption{Non-separation of poles when $m=4j$; separating when $m=2+4j$.}
        \label{Fig: intersection points}
    \end{figure}
	 Note that the north and south poles of $\mb S^4/S^1$ lie in the same component of $(\mb S^4/S^1) \setminus (\Gamma_n/S^1)$. Therefore, we conclude that (Figure \ref{Fig: intersection points})
	 \begin{align}\label{Eq: intersection with C0 for n even}
	 	m=4j, \qquad \mbox{for some $j\in \{0,1,2,\dots\}$.}
	 \end{align}
	 
	 Then, consider the $\mb Z_n<D_n$ symmetries on $\mb S^4/S^1$ generated by $r\in D_n$ in \eqref{Eq: Dn reflectional action on S4/S1}. 
  Note that the $\mb Z_n$-action on $\mb S^4/S^1=S^3$ is orientation-preserving, and $\Omega_\pm$ are both $\mb Z_n$-invariant. 
	 Hence, the $\mb Z_n$-action on $\Gamma_n/S^1$ is orientation-preserving, and $(\Gamma_n/S^1)/\mb Z_n$ is orientable without boundary. 
	 In particular, the quotient map $\Gamma_n/S^1\to (\Gamma_n/S^1)/\mb Z_n$ is an $n$-degree covering with $4j$ branch points $\{p_i\}_{i=1}^{4j}$ of ramification index $n=|\mb Z_n|$. 
	 By the Riemann-Hurwitz formula (cf. \cite{ketover2016equivariant}*{(6.1)}\cite{carlotto2022free}*{Appendix B}), there exist integers $\gamma,j\in \{0,1,\dots\}$ so that
	 \[ 2-2\genus(\Gamma_n/S^1 ) = \chi(\Gamma_n/S^1) = n\cdot \chi ((\Gamma_n/S^1)/\mb Z_n) - \sum_{i=1}^{4j} (n-1)= n(2-2\gamma)-4j(n-1) .\]
  Therefore, 
	 \begin{align}\label{Eq: R-H formula of intersection 4j}
	 	\genus(\Gamma_n/S^1 )=n\gamma   + (2j-1)(n-1) . 
	 \end{align}
	For any $t\in (0,1)$, the $G_n$-hypersurface $\wti\Sigma_t$ in Lemma \ref{Lem: Gn-sweepout n-even} has $\genus(\wti\Sigma_t/S^1)=n-1$ and $\#((\wti\Sigma_t/S^1) \cap \Xi_0)=4$, which implies that $\genus((\wti\Sigma_t/S^1)/\mb Z_n) = 0$ by applying \eqref{Eq: R-H formula of intersection 4j} to $\wti\Sigma_t/S^1$. 
	By the proof of \cite{wang2026spherical}*{Theorem 6.6} and \cite{ketover2016equivariant}*{(1.3)}, $\gamma=\genus((\Gamma_n/S^1 )/\mb Z_n) =0$ and 
	\[ \genus(\Gamma_n/S^1 )=(2j-1)(n-1) .\] 
	 %
	 %
	 %
	 %
	 If $j\geq 2$, then $\genus( \Gamma_n/S^1)\geq 3n-3 > n-1$ as $n=2k\geq 2$, which contradicts \eqref{Eq: genus upper bound for even n in S4/S1}. 
	 If $j=0$, then $\genus( \Gamma_n/S^1)<0$, which is impossible. 
	 Hence, $j=1$, and $\genus( \Gamma_n/S^1)=n-1$. 	
\end{proof}

The following theorem gives a new class of minimal hypersurfaces in $\mb S^4$ whose topological types are $S^1\times \Sigma_{2k-1}$, where $\Sigma_{2k-1}$ is a closed surface of odd genus $2k-1$. 

\begin{theorem}\label{Thm: new minimal hypersurfaces of even n}
	For any even integer $n=2k\geq 2$, there exists an embedded $G_n$-invariant minimal hypersurface $\Gamma_n\subset \mb S^4$ that is a trivial principal $S^1$-bundle over an orientable closed surface $\Sigma_{2k-1}:=\Gamma_n/S^1$ of genus $n-1=2k-1$, namely,
	\[\Gamma_n\cong S^1\times \Sigma_{2k-1}.\]
	In particular, $\Gamma_n$, $n\geq 4$, has $\pi_2(\Gamma_n)=0$, and is not diffeomorphic to any finite connected sum of copies of $S^1\times S^2$, which gives a new topological type for minimal hypersurfaces in $\mb S^4$. 
\end{theorem}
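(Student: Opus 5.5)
We take $\Gamma_n$ to be the minimal hypersurface from Proposition \ref{Prop: new minimal hypersurface of even n}. It is embedded, $G_n$-invariant, and has $\genus(\Gamma_n/S^1)=n-1$. What remains is to identify the bundle $\Gamma_n\to\Gamma_n/S^1$ and check that it is trivial.

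First, by Lemma \ref{Lem: properties of S1-action}(iii), $\Gamma_n\subset\mathring{\mb S}^4$, and the $S^1$-action is free there. So $\Gamma_n\to\Sigma:=\Gamma_n/S^1$ is a principal $S^1$-bundle over a closed surface. The surface $\Sigma$ is orientable, since it bounds the region $\Omega\subset\mb S^4/S^1\cong S^3$. Principal $S^1$-bundles over $\Sigma$ are classified by the Euler class $e\in H^2(\Sigma;\mb Z)\cong\mb Z$ (\cite{husemoller1994fiberbundle}), so it suffices to show $e=0$.

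To compute $e$, we use the separation data from the proof of Proposition \ref{Prop: new minimal hypersurface of even n}. There, $\Gamma_n/S^1=\bd\Omega$ with both poles $N,S$ of $\mb S^4/S^1$ in the same component of the complement, and we may take them outside $\Omega$. Then $\overline\Omega$ is a compact $3$-manifold in the free part $\mathring{\mb S}^4/S^1$. Over $\mathring{\mb S}^4/S^1\cong S^2\times(-1,1)$, the bundle $\pi$ is the pullback of the Hopf bundle. Let $\iota:\Sigma\hookrightarrow\overline\Omega$ be the inclusion. Then the bundle over $\Sigma$ is the restriction of the bundle over $\overline\Omega$, so
\[e(\Gamma_n)=\iota^*e(\pi|_{\overline\Omega}).\]
Its pairing with $[\Sigma]$ is
\[\langle\iota^*e,[\Sigma]\rangle=\langle e(\pi|_{\overline\Omega}),\iota_*[\Sigma]\rangle,\]
and $\iota_*[\Sigma]=\bd[\overline\Omega]=0$ in $H_2(\overline\Omega)$. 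Hence $e(\Gamma_n)=0$ and the bundle is trivial, giving $\Gamma_n\cong S^1\times\Sigma_{2k-1}$.

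For comparison, if $\Sigma$ separated $N$ from $S$, it would be homologous in $\mathring{\mb S}^4/S^1$ to a slice $S_\rho$. The Hopf bundle has Euler number $\pm1$ on $S_\rho$, giving $e=\pm1$; the mod-4 count \eqref{Eq: intersection with C0 for n even} rules this case out. The one point needing care is that "the poles lie in the same component" is genuinely established, not just assumed. In the proof of Proposition \ref{Prop: new minimal hypersurface of even n}, this came from $\Omega$ being $\mb Z_2\times D_n$-invariant as the limit of the sweepout regions. If that is in doubt, I would rederive it from the $A$-invariance of $\Omega$ and the orientability of $\Gamma_n/{\rm Pin}_-(2)\subset RP^3$. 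An orientation-preserving $A$ that swaps $N,S$ and preserves each side forces $N,S$ onto the same side.

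Finally, the topological consequences. For $n\ge4$, $\Sigma$ has genus $2k-1\ge3$ and is aspherical, so the long exact homotopy sequence gives $\pi_2(S^1\times\Sigma)=0$. Hence $S^1\times\Sigma$ is irreducible. A connected sum of $m\ge1$ copies of $S^1\times S^2$ has $\pi_1$ free. But $\pi_1(S^1\times\Sigma)=\mb Z\times\pi_1(\Sigma)$ has nontrivial center and is not free, except $\mb Z$ itself, which is impossible here since $b_1=2g+1>1$. So $\Gamma_n$ is not such a connected sum; it is also not $S^3$, $T^3$, or $SO(3)/\mb Z_2^2$, by $b_1$.
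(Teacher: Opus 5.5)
Your proposal is correct and is essentially the paper's argument. In both, $e(\Gamma_n)$ is the pullback of the generator $e(\mathring{\mb S}^4)$, and it pairs to zero with $[\Gamma_n/S^1]$ because $\Gamma_n/S^1=\partial\Omega$ with both poles outside $\Omega$. As you note, that fact comes from the $\mb Z_2\times D_n$-invariance of $\Omega$, equivalently the orientability of $\Gamma_n/{\rm Pin}_-(2)$ in $RP^3$. Your ``for comparison'' remark has the logic backwards, though: the paper derives the mod-$4$ count $m=4j$ from the poles lying on the same side, so that count cannot be used to rule out separation.

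The only other deviation is cosmetic. You exclude $\#_m(S^1\times S^2)$ via $\pi_1$ (a free group versus $\mb Z\times\pi_1(\Sigma)$), whereas the paper simply compares $\pi_2(\Gamma_n)=0$ with $\pi_2(\#_m(S^2\times S^1))\neq 0$.
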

\begin{remark}
	For $n=2$, the above theorem gives a minimal $T^3$ in $\mb S^4$ as in \cite{wang2026spherical}*{Theorem 1.2}. 
\end{remark}
\begin{proof}
	For any even integer $n=2k\geq 2$, it follows from Proposition \ref{Prop: new minimal hypersurface of even n} that there is an embedded $G_n$-invariant minimal hypersurface $\Gamma_n$ with $\genus(\Gamma_n/S^1)=n-1=2k-1$. 
	Since the $S^1$-action on $\Gamma_n$ is free, we have a principal $S^1$-bundle $\Gamma_n\to\Gamma_n/S^1$. 
	
	\begin{claim}\label{Claim: Euler class}
		The principal $S^1$-bundle $\Gamma_n\to\Gamma_n/S^1$ is a trivial bundle. 
	\end{claim}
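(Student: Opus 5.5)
The plan is to read off the Euler class from how $\Gamma_n/S^1$ sits relative to the two singular points of $\mb S^4/S^1$. The idea is to show that $\Gamma_n/S^1$ bounds a region of $\mathring{\mb S}^4/S^1$ that contains neither pole, and then extend the bundle over that region.

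\emph{Step 1: both poles lie in the same component.} By Proposition \ref{Prop: new minimal hypersurface of even n}, $\Gamma_n/S^1=\bd\Omega$ for a $\mb Z_2\times D_n$-invariant open set $\Omega$. Write $\Omega_\pm$ for the two components of $(\mb S^4/S^1)\setminus(\Gamma_n/S^1)$; these are the interior and exterior of $\Gamma_n/S^1$. In the proof of that proposition we showed that $\Gamma_n/S^1$ meets the invariant circle $\Xi_0$ orthogonally, hence transversally, in $m=4j$ points, and that $j=1$.
\begin{itemize}
\item The two poles cut $\Xi_0$ into the half-circles $\Xi_0^+$ and $\Xi_0^-$.
\item The antipodal map $A(0,t,x)=(0,-t,-x)$ swaps $\Xi_0^+$ and $\Xi_0^-$, and it preserves $\Gamma_n/S^1$. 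So each half-circle contains exactly $m/2=2j$ intersection points.
\item Each transversal crossing switches between $\Omega_+$ and $\Omega_-$.
\item Walking along $\Xi_0^+$ from the north pole to the south pole, we therefore cross an even number of times.
\end{itemize}
Hence both poles lie in the same component, say $\Omega_+$. (This is the fact already used in the proposition, made explicit.) Consequently $\overline{\Omega_-}$ is a compact $3$-manifold with boundary $\Gamma_n/S^1$, contained in $\mathring{\mb S}^4/S^1$.

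\emph{Step 2: extend the bundle and evaluate the Euler class.} The $S^1$-action is free on $\mathring{\mb S}^4$ by Definition \ref{Def: suspended Hopf action}. So $\pi:\pi^{-1}(\overline{\Omega_-})\to\overline{\Omega_-}$ is a principal $S^1$-bundle, and its restriction to the boundary is exactly $\Gamma_n\to\Gamma_n/S^1$. By naturality of the Euler class,
\[ e(\Gamma_n)=\iota^*e\big(\pi^{-1}(\overline{\Omega_-})\big), \]
where $\iota:\Gamma_n/S^1\hookrightarrow\overline{\Omega_-}$ is the inclusion. Now $\Gamma_n$ is connected, so $\Gamma_n/S^1$ is a closed connected orientable surface and $H^2(\Gamma_n/S^1;\mb Z)\cong\mb Z$ is detected by evaluation on the fundamental class. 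Since $\iota_*[\Gamma_n/S^1]=\iota_*\bd[\overline{\Omega_-}]=0$ in $H_2(\overline{\Omega_-};\mb Z)$, we get
\[ \langle e(\Gamma_n),[\Gamma_n/S^1]\rangle=\big\langle e\big(\pi^{-1}(\overline{\Omega_-})\big),\iota_*[\Gamma_n/S^1]\big\rangle=0. \]
Hence $e(\Gamma_n)=0$. Principal $S^1$-bundles are classified by their Euler class (as recalled after Theorem \ref{Main thm: minimal seifert hypersurfaces in S4}), so the bundle is trivial.

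The only delicate point is Step 1: we need the intersection with $\Xi_0$ to be transversal, so that the parity count is valid, and we need the $A$-symmetry to split the intersection points evenly between the two half-circles. Both facts are already available from the orthogonality given by \cite{wang2026spherical}*{Lemma 2.8} and from the $\mb Z_2$-invariance of $\Gamma_n/S^1$.
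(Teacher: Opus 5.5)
Your Step 2 is essentially the paper's argument. The paper evaluates the class $e(\mathring{\mb S}^4)$ on $i_*[\Gamma_n/S^1]=[\bd\Omega]=0$ with $\Omega\subset\mathring{\mb S}^4/S^1$, whereas you first restrict the bundle to $\overline{\Omega_-}$. This is equivalent, and it does not even need the ambient class to be a generator (that only matters in the odd case). That part is fine.

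The problem is the justification in Step 1, which is circular. In the proof of Proposition \ref{Prop: new minimal hypersurface of even n}, the count $m=4j$ in \eqref{Eq: intersection with C0 for n even} is \emph{deduced from} the fact that the two poles lie in the same component of $(\mb S^4/S^1)\setminus(\Gamma_n/S^1)$. The $\mb Z_2\times D_n$-orbit count alone only gives $m\in\{4j,\,2+4j\}$. The genus bound $\genus(\Gamma_n/S^1)\leq n-1$ does not exclude $m=2$ with $\genus(\Gamma_n/S^1)=0$; this is exactly the pattern that occurs in the odd case. So invoking $m=4j$ and $j=1$ to prove that the poles lie on the same side assumes what you want to show. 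The parity and transversality bookkeeping you single out as the delicate point therefore establishes nothing new.

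The missing input is the $A$-invariance of the two sides, and it is immediate from what you already quoted. We have $\Gamma_n/S^1=\bd\Omega$ with $\Omega$ invariant under $A$, and the poles do not lie on $\Gamma_n/S^1$ (Lemma \ref{Lem: properties of S1-action}(iii)). Since $A$ interchanges the north and south poles, one pole lies in $\Omega$ if and only if the other does. This is the same information the paper packages as orientability of $\Gamma_n/{\rm Pin}_-(2)$. With this one-line replacement for Step 1, the rest of your argument goes through unchanged.
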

	\begin{proof}
		Note that the suspended Hopf action is free on $\mathring{\mb S}^4$ so that 
		\[\mathring{\mb S}^4/S^1\cong S^2\times (-1,1),\] 
		and the Hopf fibration $E_0\to E_0/S^1$ of the $S^1$-invariant equator $E_0\subset \mathring{\mb S}^4$ has the Euler class $e(E_0)$ as a generator of $H^2(E_0/S^1;\mb Z) \cong \mb Z$ (\cite{hatcher2002AT}*{p. 438}). 
		Hence, the ambient bundle $\mathring{\mb S}^4 \to \mathring{\mb S}^4/S^1$ also has its Euler class $e(\mathring{\mb S}^4)$ as a generator of $H^2(\mathring{\mb S}^4/S^1;\mb Z)\cong H^2(E_0/S^1; \mb Z)\cong \mb Z$. 
		
		Let $i$ be the embedding of $\Gamma_n/S^1$ into $\mathring{\mb S}^4/S^1$.
		Then, $e(\Gamma_n):=i^*e(\mathring{\mb S}^4)\in H^2(\Gamma_n/S^1;\mb Z)$ is the Euler class of the restricted bundle $\Gamma_n\to \Gamma_n/S^1$. 
		Recall that $\Gamma_n/{\rm Pin}_-(2)$ is orientable, and the north/south poles of $\mb S^4/S^1$ lie on the same side of $\Gamma_n/S^1$. 
		Hence, $\Gamma_n/S^1=\bd\Omega$ for an open set $\Omega\subset \mathring{\mb S}^4/S^1$, which implies that 
		\[e(\Gamma_n)([\Gamma_n/S^1])=i^*e(\mathring{\mb S}^4)([\Gamma_n/S^1])=e(\mathring{\mb S}^4)(i_*[\Gamma_n/S^1])=e(\mathring{\mb S}^4)([\bd\Omega])=0.\] 
		Therefore, the principal $S^1$-bundle $\Gamma_n\to \Gamma_n/S^1$ has trivial Euler class $e(\Gamma_n)=0$, and thus 
		\[\Gamma_n\cong S^1\times (\Gamma_n/S^1)\]
		is a trivial principal $S^1$-bundle. 
	\end{proof}
	
	Since $\genus(\Gamma_n/S^1)=n-1>0$ and $\Gamma_n/S^1$ is orientable, we see that $\Gamma_n/S^1$ is aspherical, and thus $\pi_2(\Gamma_n/S^1)=0$. 
	Combining this with the principal $S^1$-bundle structure of $\Gamma_n$ and the fact that $\pi_2(S^1)=0$, we conclude from the homotopy exact sequence (\cite{hatcher2002AT}*{Theorem 4.41}) that $\pi_2(\Gamma_n)=0$. 
	For any $m\geq 1$, since $\pi_2(\#_m (S^2\times S^1))\neq 0$, $\Gamma_n$ is not diffeomorphic to $\#_m (S^2\times S^1)$. 
	Therefore, $\{\Gamma_n\}_{n=2k}$ gives a family of new topological types for minimal hypersurfaces in $\mb S^4$. 
\end{proof}

\subsection{\texorpdfstring{New minimal $G_n$-hypersurfaces in $\mb S^4$ for odd $n$}{New minimal Gn-hypersurfaces in S4 for odd n}}

Fix any odd integer 
\[n=2k+1\geq 3.\]
In this subsection, we will first construct a $G_n$-equivariant sweepout $\{\wti \Sigma_t \}_{t\in [0,1]}$ in $\mb S^4$ so that $\wti\Sigma_t/S^1$ has genus $\leq 2n-2=4k$. 
Then, we use the equivariant min-max theory to obtain embedded minimal non-trivial principal $S^1$-bundles in $\mb S^4$. 

\begin{lemma}\label{Lem: Gn sweepout}
	For any odd integer $n=2k+1$, there is a $G_n$-sweepout $\{\wti \Sigma_t \}_{t\in [0,1]}$ in $\mb S^4$ so that 
	\begin{itemize}
		\item[(i)] $\wti\Sigma_0=E_0$ is the unique $G_n$-invariant equator in $\mb S^4$;
		\item[(ii)] $E_0\subset \wti \Sigma_1\subset E_0\cup \pi^{-1}(\cup_{l=1}^n\widehat \Xi_l)$, where $\{\widehat \Xi_l\}_{l=1}^n$ are the circles on $\mb S^4/S^1$ corresponding to the axes $\{\hat \xi_l\}_{l=1}^n$ in $\mb R^3$ via the stereographic projection (see \eqref{Eq: axes on S3});
		\item[(iii)] for $t\in (0,1)$, $\wti\Sigma_t\subset\mb S^4$ is an embedded $G_n$-hypersurface separating $\mb S^4$ into two $S^1$-regions $\wti\Omega_t^\pm$ so that $\sigma\cdot \wti\Omega_t^\pm = \wti\Omega_t^\mp=\hat s\cdot \wti\Omega_t^\pm$(cf. \eqref{Eq: lift antipodal map}\eqref{Eq: Dn reflectional action on S4}), $\wti\Sigma_t/{\rm Pin}_-(2)$ is a non-orientable surface in $RP^3=\mb S^4/{\rm Pin}_-(2)$, and 
		\[\genus(\wti\Sigma_t/S^1)=2n-2=4k;\]
		\item[(iv)] $\sup_{t\in [0,1]} \mc H^3_{g_{\mb S^4}}(\wti\Sigma_t) < 3 \mc H^3_{g_{\mb S^4}}(E_0) = 6\pi ^2$. 
	\end{itemize}
	Additionally, $\Phi(t):=\llbracket \wti\Sigma_t\rrbracket $, $t\in [0,1]$, forms an Almgren-Pitts $1$-sweepout. 
\end{lemma}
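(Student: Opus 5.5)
The plan is to build the sweepout downstairs in the orbit space $\mb S^4/S^1$ as a $\mb Z_2\times D_n$-invariant family $\{\Sigma_t\}$ and then set $\wti\Sigma_t:=\pi^{-1}(\Sigma_t)$, as in the proof of Lemma \ref{Lem: Gn-sweepout n-even}. Instead of two parallel spheres, I take the three slices $S_\rho$, $S_0=E_0/S^1$, $S_{-\rho}$ from \eqref{Eq: slict at x=rho}. I connect $S_\rho$ to $S_0$ by $n$ necks and $S_0$ to $S_{-\rho}$ by another $n$ necks, all placed along the $2n$ half-longitudes $\widehat\Xi_l^\pm$.

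The key combinatorial step is choosing which half-longitudes carry the upper necks and which carry the lower ones. The $2n$ half-axes $\pm\hat\xi_l$ point in the directions $(2m+1)\pi/(2n)$ with $m\in\mb Z/2n$. Under the action \eqref{Eq: Dn reflectional action on S4/S1} and the antipodal map \eqref{Eq: antipodal}, the generators act on the index $m$ as follows:
\begin{itemize}
\item $r$ sends $m\mapsto m+2$;
\item $s$ sends $m\mapsto -m-1$;
\item $A$ sends $m\mapsto m+n$.
\end{itemize}
Since $n$ is odd, $r$ preserves the parity of $m$, while $s$ and $A$ both reverse it. Each of $s$ and $A$ also flips $x\mapsto -x$.

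So I put the upper necks (between $S_\rho$ and $S_0$) along the directions with $m$ even, and the lower necks (between $S_0$ and $S_{-\rho}$) along those with $m$ odd. With this choice the whole configuration $\Gamma_{\rho,\epsilon}$ is $\mb Z_2\times D_n$-invariant. Three spheres joined by $2n$ necks have genus $2n+1-3+1=2n-2$, which gives the genus claim in (iii).

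The complement splits into two regions:
\begin{itemize}
\item $\Omega^+$, consisting of the cap $\{x>\rho\}$ together with the slab $\{-\rho<x<0\}$, with the necks adjusted;
\item $\Omega^-$, the rest.
\end{itemize}
Because $s$ and $A$ flip $x$ and exchange the two families of necks, they swap $\Omega^\pm$. Both maps preserve the ambient orientation, so each reverses the co-orientation of $\Gamma_{\rho,\epsilon}$. Hence $A$ reverses the orientation of the surface, and $\Gamma_{\rho,\epsilon}/\langle A\rangle\subset RP^3$ is non-orientable. Lifting to $\mb S^4$ gives (iii), with $\sigma$ and $\hat s$ interchanging $\wti\Omega_t^\pm$.

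Next I take $\Sigma_t'=\Gamma_{\rho(t),\epsilon(t)}$ with $\rho$ increasing from $0$ to $1$ and neck radii $\epsilon\le\epsilon_0(\rho)$ as before.
\begin{itemize}
\item As $\rho\to1$, the outer spheres collapse to the poles and the necks degenerate to arcs of $\widehat\Xi_l$. The limit is contained in $S_0\cup\bigcup_l\widehat\Xi_l$, which gives (ii) after lifting.
\item As $\rho\to0$, the three sheets collapse onto $S_0$. I open the necks with the Ketover--Marques--Neves catenoid estimate, using $\Ric>0$ from Lemma \ref{Lem: properties of S1-action}(ii). Then I retract, equivariantly via a $\mb Z_2\times D_n$-equivariant deformation as in Lemma \ref{Lem: Gn-sweepout n-even}, so that the limit at $t=0$ is the set $E_0/S^1$, giving (i).
\end{itemize}

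For the area bound (iv), away from $\rho\approx0$ the area is roughly
\[
\mc H^3(\pi^{-1}(S_{\pm\rho}))\cdot 2+\mc H^3(E_0)=2\pi^2(2\sin^3\alpha+1)<6\pi^2,
\]
plus a small neck contribution. Near $\rho=0$ the catenoid estimate gives area $<3\mc H^3(E_0)-Ct^2$, and the neck-opening step keeps the bound strict, as in \cite{haslhofer2019sphere}.

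For the $1$-sweepout property, I track $\Omega^+_t$. As $t\to0$, the slab thins out and $\Omega^+_t$ tends to the upper hemisphere $\{x>0\}$. As $t\to1$, the cap disappears and $\Omega^+_t$ tends to the lower hemisphere $\{x<0\}$. Thus $\wti\Omega_1=\mb S^4\setminus\wti\Omega_0$, as required.

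The main obstacle is the area control near $t=0$, where the sheets have multiplicity close to three. There one must check that the equivariant catenoid estimate applies with $2n$ necks attached to the middle sheet from both sides simultaneously. One must also check that the retraction opening the necks keeps the area strictly below $3\mc H^3(E_0)$ while staying $\mb Z_2\times D_n$-equivariant.
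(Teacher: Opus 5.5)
Your construction is the paper's: the three slices $S_\rho,S_0,S_{-\rho}$ are joined by $2n$ necks along the half-longitudes $\widehat\Xi_l^\pm$. The necks are split by parity, so that $r$ preserves each family while $A$ and $s$ exchange them; the paper puts the upper necks along $\widehat\Xi_l^{{\rm sgn}((-1)^l)}$, which is your odd-$m$ family, and the two choices differ by applying $A$. The rest also matches: the separation and one-sidedness argument, the catenoid estimate near $t=0$, and a neck-opening deformation. Your tracking of $\Omega_t^+$ from $\{x>0\}$ to $\{x<0\}$ is correct and more explicit than the paper's ``one easily sees''. There is a small arithmetic slip: the genus is $2n-3+1$, not $2n+1-3+1$.

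The step that does not work as written is opening the necks ``as in Lemma~\ref{Lem: Gn-sweepout n-even}''.
\begin{itemize}
\item In the even case every sheet is punctured in every lune cut out by the circles $\Upsilon_l$, so one equivariant retraction of the punctured lunes collapses the whole configuration.
\item Here only the middle sheet is punctured in all $2n$ lunes. $S_\rho$ is punctured only in the $n$ lunes $U_+$ containing the feet of the upper necks, and $S_{-\rho}$ only in the complementary lunes $U_-$.
\end{itemize}
A single retraction therefore cannot be applied to the outer sheets over their unpunctured lunes. It also would not give the limit $E_0/S^1$ with multiplicity one that (i) requires.

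The paper's fix is as follows. Take two $\mathbb Z_n$-equivariant retractions $R_\pm$ of the punctured lunes in $U_\pm$ onto $\cup_l\Upsilon_l$. Suspend them along $x$, extend them by the identity elsewhere, and choose them so that $A\circ R_\pm=R_\mp\circ A$ and $s\circ R_\pm=R_\mp\circ s$. Then apply
\begin{itemize}
\item $R_+$ to the upper sheet together with the upper necks,
\item $R_-$ to the lower sheet together with the lower necks,
\item the combined retraction $R$ to the middle sheet.
\end{itemize}
The middle sheet collapses onto $\cup_l\Upsilon_l$. The upper sheet survives exactly over $U_-$ and the lower sheet exactly over $U_+$. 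So the varifold limit $|E_0/S^1|+2|S_{0,\tilde\epsilon_0}|$ is replaced by $E_0/S^1$ with multiplicity one. The family is $\mathbb Z_2\times D_n$-equivariant even though neither $R_+$ nor $R_-$ alone is. With this replacement the rest of your argument goes through, including $\Omega_t^+\to\{x>0\}$ as $t\to0$; over $U_+$ it differs from the upper hemisphere only by a thin slab.
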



\begin{proof}
	It is sufficient to construct the corresponding $\mb Z_2\times D_n$-equivariant sweepout $\{\Sigma_t\}_{t\in [0,1]}$ in $\mb S^4/S^1$ under the parameterization \eqref{Eq: parameterize S4/S1}. 
	
	For any small $\epsilon>0$ and $l=1,\dots, n$, let $B_\epsilon(\widehat \Xi_l^\pm)$ be an $\epsilon$-neighborhood of the longitude $\widehat\Xi_l^\pm$ (see \eqref{Eq: half axis hat xi on S3}) in $\mb S^4/S^1$. 
	Since the longitude $\widehat\Xi_l^\pm$ is transversal to each slice
	\[S_\rho:= \{x=\rho\}\cap \mb S^4/S^1, \qquad \rho\in (-1,1), \] 
	$\bd B_\epsilon(\widehat\Xi_l^\pm)$ is transversal to $S_\rho$ for $\epsilon>0$ small enough. 
	Additionally, there exists a continuous function $\epsilon_0: [0,1] \to [0, \infty)$ so that
	\begin{itemize}
		\item $\lim_{|\rho|\to  1}\epsilon_0(|\rho|) = 0$, $\lim_{|\rho|\to 0} \epsilon_0(|\rho|) = \tilde\epsilon_0>0$; 
		\item for any $\epsilon\in (0, 2\epsilon_0(|\rho|))$, $B_\epsilon(\widehat\Xi_l^\pm)\cap S_\rho$ is a disk and $S_\rho\cap B_\epsilon(\widehat\Xi_l^\kappa)\cap B_\epsilon(\widehat\Xi_{l'}^{\kappa'})=\emptyset$, where $l\neq l'\in \{1, \dots, n\}$ and $\kappa,\kappa'\in \{\pm\}$.
	\end{itemize}
	Therefore, using the above notations and the parameterization \eqref{Eq: parameterize S4/S1}, we can define the following surfaces in $\mb S^4/S^1$ for $\rho \in (0,1)$ and $\epsilon\in (0, 2\epsilon_0(\rho))$: 
	\begin{align*}
		S_{0,\epsilon} &:= S_0\setminus  \cup_{l=1}^{n} B_\epsilon(\widehat \Xi_{l}) = (E_0/S^1)\setminus \cup_{l=1}^{n}\cup_{\kappa\in \{\pm\}} B_\epsilon(\widehat \Xi_{l}^\kappa),
		\\
		S_{\rho,\epsilon}^+ &:= S_\rho\setminus \cup_{l=1}^nB_\epsilon(\widehat \Xi_l^{{\rm sgn}((-1)^{l})} ),
		\\
		S_{\rho,\epsilon}^- &:= S_{-\rho}\setminus \cup_{l=1}^nB_\epsilon(\widehat \Xi_l^{{\rm sgn}((-1)^{l+1})} ),
		\\
		C_{\rho,\epsilon}^+ &:=  \{(w,t,x)\in \cup_{l=1}^n\bd B_\epsilon(\widehat \Xi_l^{{\rm sgn}((-1)^{l})}): 0<x<\rho \},
		\\
		C_{\rho,\epsilon}^- &:=  \{(w,t,x)\in \cup_{l=1}^n\bd B_\epsilon(\widehat \Xi_l^{{\rm sgn}((-1)^{l+1})}): -\rho <x<0 \},
	\end{align*}
	where ${\rm sgn}(\pm 1) := \pm$ is the sign of $\pm 1$. 
	Then, 
	\[\Gamma_{\rho,\epsilon}:= S_{\rho,\epsilon}^+\cup C_{\rho,\epsilon}^+\cup S_{0,\epsilon} \cup C_{\rho,\epsilon}^-\cup S_{\rho,\epsilon}^-\]
	is a connected surface in $\mb S^4/S^1$ formed by three spheres $S_\rho \cup S_0 \cup S_{-\rho}$ that are connected via $2n$ necks along $\cup_{l=1}^n\widehat \Xi_{l}^{\pm}$ (see the right panel of Figure \ref{Fig: sweepout n-odd} for the top view under the projection $P$ \eqref{Eq: stereographic projection}). 
    \begin{figure}[h]
    \centering
    \includegraphics[height=1.75in]{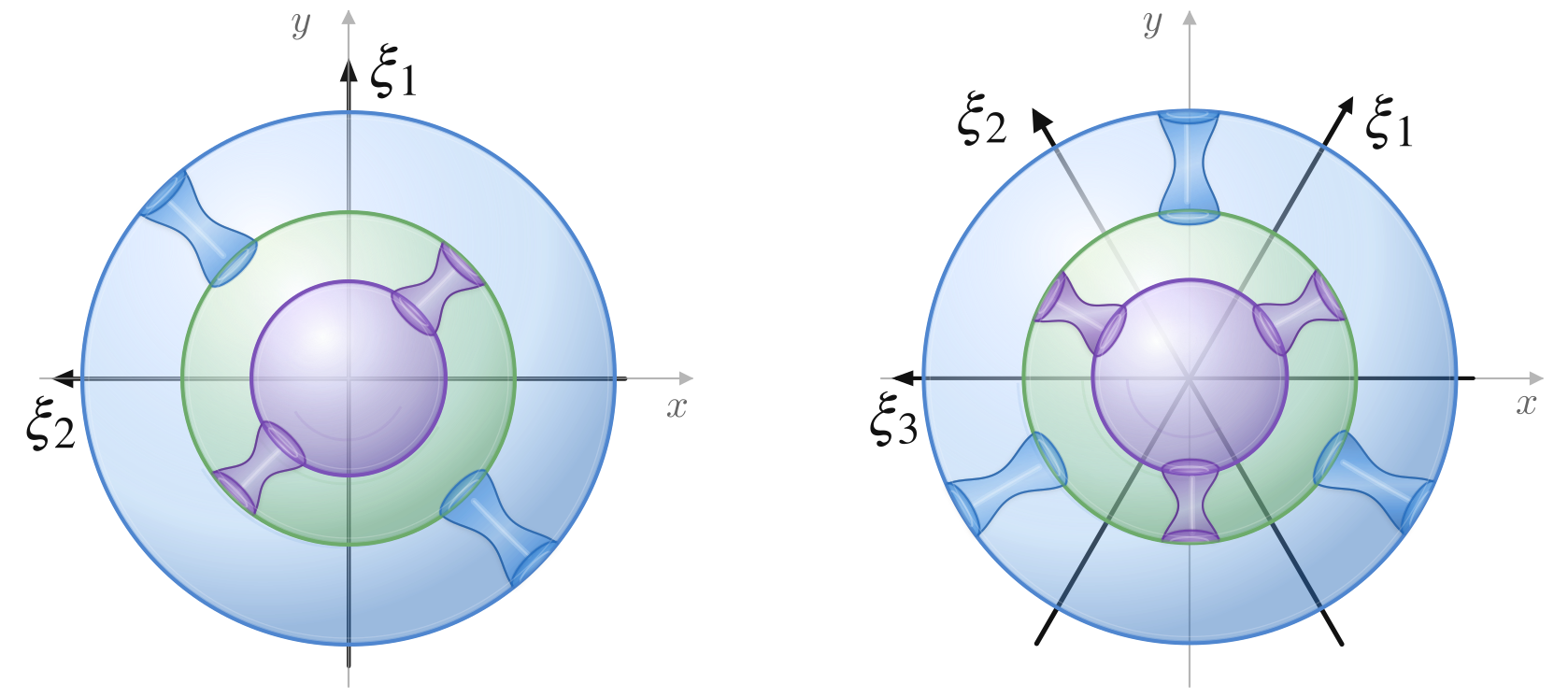}
    \caption{$D_n$-invariant $\widehat\Gamma_{\rho,\epsilon}$ ($n=2$), and $\mb Z_2\times D_n$-invariant $\Gamma_{\rho,\epsilon}$ ($n=3$).}
    \label{Fig: sweepout n-odd}
    \end{figure}

	Recall that $n=2k+1$ is odd. 
	One can verify that the unions of arcs 
	\[ \gamma^+:=\cup_{l=1}^n \{(w,t,x)\in  \widehat\Xi_{l}^{{\rm sgn}((-1)^l)} : x>0  \}
		\quad {\rm and}\quad 
	 	\gamma^-:=\cup_{l=1}^n \{(w,t,x)\in  \widehat\Xi_{l}^{{\rm sgn}((-1)^{l+1})} : x<0  \}\]
	are both invariant under the action of $\mb Z_n=\langle r\rangle$ in \eqref{Eq: Dn reflectional action on S4/S1}
	so that $A\cdot \gamma^\pm= \gamma^\mp = s\cdot \gamma^\pm$ for the antipodal map $A$ in \eqref{Eq: antipodal} and the action of $s\in D_n$ in \eqref{Eq: Dn reflectional action on S4/S1}.
	Since $C_{\rho,\epsilon}^\pm$ are unions of necks centered at $\gamma^\pm$ respectively, 
	we see that $C_{\rho,\epsilon}^+\cup C_{\rho,\epsilon}^-$ is $\mb Z_2\times D_n$-invariant, which implies that $\Gamma_{\rho,\epsilon}$ is $\mb Z_2\times D_n$-invariant. 
	Additionally, by the constructions, we have 
	\begin{itemize}
		\item $\genus(\Gamma_{\rho,\epsilon})=2n-2=4k$;
		\item $\Gamma_{\rho,\epsilon}$ separates the north and south poles $(0^{(2)},0,\pm 1)$ of $\mb S^4/S^1$ into different regions. 
	\end{itemize}
	Since the maps $A$ and $s\in D_n$ are orientation-preserving on $\mb S^4/S^1$, we see from the second bullet that $A$ and $s$ reverse the orientation of $\Gamma_{\rho,\epsilon}$, which implies that $\Gamma_{\rho,\epsilon}/\langle A\rangle \subset RP^3$ is $1$-sided.

	Consider the family $\{\Sigma_t'\}_{t\in (0,1)}$ of $\mb Z_2\times D_n$-invariant surfaces given by 
	\[ \Sigma_t':= \Gamma_{t,\epsilon_0(t)}. \]
	As $t\to 1$, $\Sigma_t'$ tends to the union of $E_0/S^1=S_0$ and the arcs $\gamma^+\cup\gamma^-$. 
	For $t$ near $0$, since $(\mathring{\mb S}^4/S^1, \wti g_{\mb S^4/S^1})$ has positive Ricci curvature (Lemma \ref{Lem: properties of S1-action}), one can apply the catenoid estimates in \cite{ketover2020catenoid} to modify the necks $C_{\rho,\epsilon}^+\cup C_{\rho,\epsilon}^-$ equivariantly so that 
	\[ \mc H^3_{g_{\mb S^4}}(\pi^{-1}(\Sigma_t')) < 3 \mc H^3_{g_{\mb S^4}}(E_0) - Ct^2, \] 
	for some $C>0$ (see \cite{haslhofer2019sphere}*{(10.4)} and \cite{wang2026spherical}*{(7.7)}). 
	As $t\to 0$, $|\Sigma_t'|$ tends to $|E_0/S^1|+2|S_{0,\tilde\epsilon_0}| $. 
	Hence, we shall modify the surface $\Sigma_t'$ for $t$ near $0$ by opening the necks so that $\Sigma_t'\to E_0/S^1$ as $t\to 0$. 
	
	To be exact, the union of circles $\cup_{l=1}^n \Upsilon_{l}$ separates the equator $E_0/S^1=\{x=0\}\cap \mb S^4/S^1$ into $2n=4k+2$ lunes $L_1^\pm,\dots,L_{n}^\pm$ with closures denoted by $\{\overline L_l^\pm\}_{l=1}^n$ so that $L_l^\pm$ contains $p_l^\pm:=(\pm \frac{(-1)^{l}}{2}e^{(\frac{l\pi}{n} - \frac{\pi}{2n} ) i}, 0,0) =  (E_0/S^1)\cap  \widehat \Xi_{l}^{{\rm sgn}(\pm (-1)^l)}$ for $1\leq l \leq n$ respectively.
	Since every $\overline L_l^\pm\setminus \{p_l^\pm\}$ is topologically a punctured disk that retracts onto its boundary, we can take two $\mb Z_n=\langle r\rangle$ equivariant retractions $R_+$ and $R_-$ on $U_\pm:=\cup_{l=1}^{n} (\overline L_l^\pm\setminus \{p_l^\pm\} )$:  
	\[R_\pm: [0,1]\times U_\pm\to U_\pm\] 
	so that $R_\pm(t,\cdot)$ is smooth for $0\leq t<1$, 
	\begin{itemize}
		\item $R_\pm (0,\cdot )=id$,  
		\item $R_\pm (1, \cup_{l=1}^{n} (L_l^\pm\setminus p_l^\pm ))= \cup_{l=1}^{n}(\bd L_l^\pm) = \cup_{l=1}^n  \Upsilon_{l}$, and 
		\item $A\cdot R_\pm(t, p)= R_\mp(t, A\cdot p)$ and $s\cdot R_\pm(t, p)= R_\mp(t,s\cdot p)$, for any $p\in U_\pm$,
	\end{itemize}
	which can be joined together as a retraction $R$ from $(E_0/S^1)\setminus (\cup_{l=1}^n \widehat \Xi_l)$ to $\cup_{l=1}^n  \Upsilon_{l}$. 
	Moreover, by suspending the retractions along the $x$-coordinate in $\mb S^4/S^1$, one can $\mb Z_n$-equivariantly extend $R_\pm$ into $\Sigma(U_\pm):=\left \{(w,t,x)\in \mb S^4/S^1: \frac{(w,t,0)}{\sqrt{1-x^2}}\in U_\pm, -1<x<1 \right\}$ so that the statement in the third bullet remains valid. 
	Therefore, for a fixed small $\delta>0$, after taking $R_\pm\equiv id $ on $ (\mb S^4/S^1)\setminus \Sigma(U_\pm)$, we  modify $\Sigma_t'$, $t\in (0,1)$, by opening the necks via $R_\pm$ and $R$ to obtain
	\[\Sigma_t:= R_+(T(t), S_{t,\epsilon_0(t)}^+\cup C_{t,\epsilon_0(t)}^+) \cup R(T(t), S_{0,\epsilon_0(t)}) \cup R_-(T(t), S_{t,\epsilon_0(t)}^-\cup C_{t,\epsilon_0(t)}^-)\]
	where $T(t):=\max(1-t/\delta, 0)$. 
	Note that as $t\to 0$ and $t\to 1$, $\Sigma_t$ tends to $\Sigma_0:=E_0/S^1$ and $\Sigma_1:=E_0/S^1\cup \gamma^+\cup \gamma^-\subset E_0/S^1\cup_{l=1}^n\widehat\Xi_l$ respectively. 
	Similar to \cite{haslhofer2019sphere}*{(10.5)}, we still have $\mc H^3_{g_{\mb S^4}}(\pi^{-1}(\Sigma_t)) < 3 \mc H^3_{g_{\mb S^4}}(E_0)$. 
	
	Finally, by smoothing and taking $\wti\Sigma_t:=\pi^{-1}(\Sigma_t)$, we have the desired $\{\wti\Sigma_t\}_{t\in [0,1]}$. 
	One easily sees from the constructions that $\Phi(t):=\llbracket \wti\Sigma_t\rrbracket$ is an Almgren-Pitts $1$-sweepout,  and $\wti\Sigma_t$, $t\in (0,1)$, separates $\mb S^4$ into two $S^1$-regions $\wti\Omega_t^\pm$ that are permuted under the action of $\sigma$ and $\hat s$. 
\end{proof}

Given an odd integer $n=2k+1\geq 3$ and the $G_n$-sweepout $\{\wti \Sigma_t\}_{t\in [0,1]}$ constructed in Lemma \ref{Lem: Gn sweepout}, let $\Pi_n$ be the $G_n$-equivariant $([0,1], \{0,1\})$-homotopy class of $\{\wti\Sigma_t\}_{t\in [0,1]}$ in $\mb S^4$. 
\begin{proposition}\label{Prop: new minimal hypersurface}
	Using the above notations, 
	\[\mc H^3_{g_{\mb S^4}}(E_0)< \mf L(\Pi_n) < 3\mc H^3_{g_{\mb S^4}}(E_0),\] 
	and $\mf L(\Pi_n)$ is realized by the area of a $G_n$-invariant minimal hypersurface $\Gamma_n\subset \mb S^4$ with multiplicity one such that 
	\[\genus(\Gamma_n/S^1)\in \{0, 2n-2\},\] 
	and $\Gamma_n$ separates $\mb S^4$ into two $S^1$-invariant open sets $\wti\Omega_n^\pm$ with $\sigma\cdot  \wti \Omega_n^\pm = \wti \Omega_n^\mp = \hat s\cdot \wti \Omega_n^\pm$ (see \eqref{Eq: lift antipodal map}\eqref{Eq: Dn reflectional action on S4}). 
    In particular, $\Gamma_n/{\rm Pin}_-(2)$ is non-orientable in $RP^3=\mb S^4/{\rm Pin}_-(2)$. 
\end{proposition}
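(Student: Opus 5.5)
The plan follows the proof of Proposition \ref{Prop: new minimal hypersurface of even n}, with two changes: the lower width bound must now be strict, and the pole-separation parity is reversed.

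\emph{Width bounds.} The upper bound $\mf L(\Pi_n)<3\mc H^3(E_0)$ is Lemma \ref{Lem: Gn sweepout}(iv). For the lower bound, $\Phi$ is an Almgren--Pitts $1$-sweepout, so $\mf L(\Pi_n)>0$. By \cite{wang2026spherical}*{Theorem 1.5} and the Frankel property, the width is realized as $m_n\mc H^3(\Gamma_n)$ for a connected embedded minimal $G_n$-hypersurface $\Gamma_n$.

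\emph{Ruling out the equator.} I would show that $\Gamma_n$ is not $E_0$ and that $m_n=1$. Suppose $\Gamma_n=E_0$. Since $E_0$ is the unique $G_n$-invariant equator, the width would be $m_n\cdot 2\pi^2<6\pi^2$, so $m_n\in\{1,2\}$. The value $m_n=1$ with $\Gamma_n=E_0$ is excluded by the standard fact that the width of a nontrivial sweepout strictly exceeds the area of its endpoint $\wti\Sigma_0=E_0$. Alternatively, one can pull tight using the stability/index argument of \cite{wang2026spherical}*{(7.7)--(7.8)}: $E_0$ has equivariant index one, and the sweepout starts at $E_0$ and moves only in the direction of its unstable mode. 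The case $m_n=2$ is the hardest, and I would handle it as in \cite{wang2026spherical}*{Theorem 7.10}. Surfaces near $2|E_0|$ in a min-max sequence have sheets interchanged by $\sigma$ and $\hat s$, because each $\wti\Sigma_t$ separates $\mb S^4$ into $\wti\Omega_t^\pm$ with $\sigma\wti\Omega^\pm_t=\wti\Omega^\mp_t$. Combined with $\Ric>0$ and catenoid estimates, this contradicts the positivity of the width gap. Once $\Gamma_n\neq E_0$, the rigidity that $E_0$ uniquely minimizes area among minimal hypersurfaces gives $\mc H^3(\Gamma_n)>\mc H^3(E_0)$. Then $m_n\ge 2$ would give width $>2\cdot 2\pi^2$. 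Excluding this requires an area gap above $2\pi^2$ for nonequatorial minimal hypersurfaces: Clifford-type examples have area $\geq \tfrac{4}{3\sqrt3}\cdot 2\pi^2\cdot\ldots>\pi^2\cdot 3$, so $2\mc H^3(\Gamma_n)>3\mc H^3(E_0)$. If that gap estimate is unavailable, I would fall back on the multiplicity-one argument of \cite{wang2026spherical}*{(7.8)}. Hence $m_n=1$ and $\mc H^3(E_0)<\mf L(\Pi_n)$.

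\emph{Separation and genus.} By \cite{zhou2015positiveRic}*{Proposition 6.1} and multiplicity one, $\Gamma_n/S^1=\bd\Omega$. Because the separating structure of the sweepout is inherited (regions are swapped by $\sigma,\hat s$), the two complementary components $\Omega_\pm$ are permuted by $A$ and $s$. Since $A$ swaps the poles, the poles lie on opposite sides, and $(\Gamma_n/S^1)/\langle A\rangle$ is one-sided, hence non-orientable. For genus, \cite{ketover2016equivariant}*{Theorem 1.3} gives $\genus(\Gamma_n/S^1)\le 2n-2$. On $\Xi_0$, separation of the poles forces $m=2+4j$ intersection points. Apply Riemann--Hurwitz for the $\mb Z_n$ action with $\gamma=\genus$ of the quotient, which equals $0$ by \cite{wang2026spherical}*{Theorem 6.6}, since the sweepout surfaces have $m=6$ points and quotient genus $0$. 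This gives
\[2-2\genus=2n-(2+4j)(n-1),\qquad\text{so}\qquad \genus=(2j)(n-1).\]
Thus $\genus\in\{0,2n-2,4n-4,\dots\}$, and the upper bound leaves $\{0,2n-2\}$.
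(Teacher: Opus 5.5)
You have a genuine gap at the strict lower bound $\mc H^3(E_0)<\mf L(\Pi_n)$, and the order of your argument is circular. Since $\wti\Sigma_0=E_0$ and $\mc H^3(\wti\Sigma_1)=\mc H^3(E_0)$ (the extra pieces of $\wti\Sigma_1$ are $2$-dimensional), this strict inequality is exactly the width hypothesis \cite{wang2026spherical}*{(6.1)}. It must be proved \emph{before} \cite{wang2026spherical}*{Theorem 1.5} produces any $\Gamma_n$. So you cannot first obtain $\Gamma_n$ and then rule out $\Gamma_n=E_0$.

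Neither of your justifications works.
\begin{itemize}
\item The ``standard fact'' is false in general. Rotating $E_0$ through a half-turn in the $({\rm Re}(z_1),x)$-plane is a nontrivial Almgren--Pitts $1$-sweepout with both endpoints $\llbracket E_0\rrbracket$ and width exactly $\mc H^3(E_0)$.
\item $E_0$ does not have equivariant index one. The maps $\sigma$ and $\hat s$ send $\bd_x$ to $-\bd_x$, so a $G_n$-invariant normal variation $\phi\,\bd_x$ must satisfy $\phi\circ\sigma=-\phi$. This excludes the constants, which are the only unstable direction of the Jacobi operator $\Delta+3$ on $E_0$. Also, no restricted linear function is Hopf-invariant. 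Hence $E_0$ is $G_n$-equivariantly strictly stable, and there is no unstable mode for the sweepout to ``move along.''
\end{itemize}

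The missing idea is the paper's homological one. By Lemma \ref{Lem: Gn sweepout}(iii), every slice has non-orientable ${\rm Pin}_-(2)$-quotient, so it lies in $\alpha_0=\{\llbracket E_0\rrbracket+\bd\Omega:\Omega\in\mc C^{{\rm Pin}_-(2)}(\mb S^4)\}$. In this class $\llbracket E_0\rrbracket$ is the unique mass minimizer, by \cite{li2026infinite}*{Lemma 2.4} together with the fact that $E_0$ is the unique $S^1$-invariant equator. This gives $\mf L(\Pi_n)\ge\mc H^3(E_0)$. If equality held, every slice of a late element of a minimizing sequence would be $\mc F$-close to $\llbracket E_0\rrbracket$. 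That contradicts the nontriviality of the $1$-sweepout (\cite{marques2017existence}*{Corollary 3.4}).

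Your multiplicity step is also not sound as written. The area gap $\mc H^3(\Gamma)\ge 3\pi^2$ for all non-equatorial minimal hypersurfaces is not known; it is a Solomon--Yau-type statement (the Clifford hypersurface $S^1(\sqrt{1/3})\times S^2(\sqrt{2/3})$ has area $16\pi^2/(3\sqrt3)$). The sheet-interchange sketch for $m_n=2$ is not an argument either.

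The paper gets parity directly. It runs the proof of \cite{li2025RP2}*{Proposition 5.4} on the sweepout, whose complementary regions are swapped by $\sigma$ and $\hat s$. This yields three things at once: $\Gamma_n/S^1$ separates into regions swapped by $A$ and $s$; $\Gamma_n/{\rm Pin}_-(2)$ is non-orientable; and $m_n$ is odd. Then $m_n\mc H^3(E_0)\le m_n\mc H^3(\Gamma_n)<3\mc H^3(E_0)$ gives $m_n=1$ immediately. Your fallback to \cite{wang2026spherical}*{(7.8)} is also acceptable, but only after (6.1) is established.

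Finally, you overlook that each $\wti\Sigma_t/S^1$ contains the fixed circles $\Upsilon_l$. So the sweepout is not of locally $G_n$-boundary type, and \cite{wang2026spherical}*{Theorem 1.5} does not apply verbatim. The paper combines \cite{wang2026spherical}*{\S 6.3} away from $E_0$ with \cite{ketover2016free}*{Theorem 3.2} near $E_0$. The genus bound $2n-2$ likewise comes from \cite{ketover2016free}*{Theorem 3.2}, not from \cite{ketover2016equivariant}*{Theorem 1.3}.

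Your $\Xi_0$-parity and Riemann--Hurwitz computation matches the paper.
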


\begin{proof}
	Since $\{\wti\Sigma_t\}_{t\in [0,1]}$ generates a $1$-sweepout in the sense of Almgren-Pitts, we see $\mf L(\Pi_n)>0$. 

    Recall that the totally geodesic equator has the least area among all minimal hypersurfaces in $\mb S^4$, and $E_0$ is the unique $S^1$-invariant equator (Lemma \ref{Lem: properties of S1-action}). 
	Hence, combining these with \cite{li2026infinite}*{Lemma 2.4}, we know $\llbracket E_0\rrbracket$ is the unique mass minimizer in 
    $\alpha_0:=\{\llbracket E_0\rrbracket + \bd \Omega: \Omega\in \mc C^{{\rm Pin}_-(2)}(\mb S^4) \}$.
	In particular, $E_0$ is the area minimizer among all the embedded ${\rm Pin}_-(2)$-hypersurfaces $\Sigma$ with $\Sigma/{\rm Pin}_-(2)$ non-orientable in $RP^3=\mb S^4/{\rm Pin}_-(2)$. 
	Hence, by Lemma \ref{Lem: Gn sweepout}(iii), 
	\[\mf L(\Pi_n)\geq \mc H^3_{g_{\mb S^4}}(E_0).\] 
	If $\mf L(\Pi_n)=\mc H^3_{g_{\mb S^4}}(E_0)$, 
	then for any minimizing sequence $\{\{\wti\Sigma^i_{t}\}_{t\in [0,1]}\}_{i\in\mb N}$ and any $\{t_i\}_{i\in\mb N}\subset [0,1]$, $\{\llbracket \wti \Sigma^{i}_{t_i} \rrbracket\}_{i\in\mb N}$ is a mass minimizing sequence in $\alpha_0$, and thus $\llbracket \wti \Sigma^{i}_{t_i} \rrbracket\to \llbracket E_0 \rrbracket$ up to a subsequence as $i\to\infty$. 
	Thus, for any $\epsilon>0$, there exists $i_0\in\mb N$ so that $\mc F(\llbracket \wti \Sigma^{i}_{t}\rrbracket, \llbracket E_0\rrbracket )<\epsilon$ for all $i\geq i_0$ and $t\in [0,1]$. 
	However, by Lemma \ref{Lem: Gn sweepout}, $\Phi_{i_0}(t):=\llbracket \wti\Sigma^{i_0}_t \rrbracket$, ($t\in [0,1]$), is a non-trivial Almgren-Pitts $1$-sweepout, which cannot be contained in an $\epsilon$-neighborhood of $\llbracket E_0\rrbracket$ for some $\epsilon>0$ small enough (\cite{marques2017existence}*{Corollary 3.4}). 
	Therefore, together with Lemma \ref{Lem: Gn sweepout}(iv), we conclude that 
    \begin{align}\label{Eq: width estimates for odd n}
        \mc H^3_{g_{\mb S^4}}(E_0)< \mf L(\Pi_n) \leq \sup_{t\in [0,1]}\mc H^3_{g_{\mb S^4}}(\wti\Sigma_t) < 3\mc H^3_{g_{\mb S^4}}(E_0).
    \end{align}
	In particular, $\Pi_n$ satisfies the width assumption in \cite{wang2026spherical}*{(6.1)}.
 
	Note that the $G_n$-sweepout $\{\wti\Sigma_t\}_{t\in [0,1]}$ is not of locally $G_n$-boundary-type (cf. \cite{wang2026spherical}*{Definition 2.12}) since every $\wti\Sigma_t/S^1$ contains $\cup_{l=1}^n \Upsilon_l$. 
	Nevertheless, we can combine the equivariant min-max theory in \cite{wang2026spherical} and \cite{ketover2016free} to obtain a connected embedded minimal hypersurface $\Gamma_n$ with  
	\[m_n\cdot \mc H^3_{g_{\mb S^4}}(\Gamma_n)=\mf L(\Pi_n)\qquad \mbox{for some $m_n\in\mb N$}.\] 
	Indeed, since the locally $G$-boundary-type assumption is only used in the regularity theory \cite{wang2026spherical}*{\S 6.3}, we only need to show the regularity of the min-max $G_n$-varifold $V_n\in  \mc V^{G_n}_3(\mb S^4)$ given by \cite{wang2026spherical}*{Theorem 6.12} associated with $\Pi_n$. 
	Note that under the $\mb Z_2\times D_n$-action with odd $n$, the singular loci (i.e. the union of points with non-trivial isotropy group) on $\mb S^4/S^1$ are given by $\{\Xi_0,\Upsilon_1,\dots\Upsilon_n,\widehat\Xi_1,\dots,\widehat\Xi_n\}$. 
    Hence, since every $\wti\Sigma_t/S^1$, $t\in (0,1)$, is orthogonal to $\Xi_0\cup(\cup_{l=1}^n\widehat\Xi_l)$, we conclude from \cite{wang2026spherical}*{Remark 2.13, Proposition 2.10(3)} that $\wti\Sigma_t \llcorner (\mb S^4\setminus B_\epsilon(E_0))$ is of locally $G_n$-boundary-type for all $t\in (0,1)$ and some small $\epsilon>0$. 
	Thus, the regularity of $V_n\llcorner \mb S^4\setminus B_\epsilon(E_0)$ follows from the proof in \cite{wang2026spherical}*{\S 6.3}. 
	Additionally, using \eqref{Eq: area in S4/S1}, one can apply the regularity theory of \cite{ketover2016free}*{Theorem 3.2, \S 7.2} for the $\mb Z_2\times D_n$-equivariant min-max in $(B_\epsilon(E_0)/S^1, \wti g_{\mb S^4/S^1} )$, which further provides the regularity of $V_n \llcorner B_\epsilon(E_0)$. 
	Finally, the connectedness of $\Gamma_n=\spt(\|V_n\|)$ follows from the Frankel property in the round $\mb S^4$.

	Additionally, in the $G_n$-sweepout $\{\wti\Sigma_t\}_{t\in [0,1]}$, each $\wti\Sigma_t/S^1$ separates $\mb S^4/S^1$ into two regions that are interchanged under the antipodal map $A$ and the action of $s\in D_n$. 
	Hence, the proof of \cite{li2025RP2}*{Proposition 5.4} also applies to show that 
    \begin{itemize}
        \item[(1)] $\Gamma_n/{\rm Pin}_-(2)$ is non-orientable, and the multiplicity $m_n$ is odd;
        \item[(2)] $\Gamma_n/S^1$ separates $\mb S^4/S^1$ into two regions $\Omega_\pm$ with $A\cdot \Omega_\pm=\Omega_\mp=s\cdot \Omega_\pm$.  
    \end{itemize}
	Combined with $\mc H^3_{g_{\mb S^4}}(E_0)\leq \mc H^3_{g_{\mb S^4}}(\Gamma_n)$ and the above width estimates, we see $m_n=1$. 
	One can also show $m_n=1$ by the proof of \cite{wang2026spherical}*{(7.8), Theorem 7.10}, where $\Ric_{\mb S^4}>0$ is essentially used.

	Moreover, the embeddedness of $\Gamma_n$ implies that $\Gamma_n/S^1\subset \mathring{\mb S}^4/S^1$ does not contain the north or south pole, and can only intersect $\Xi_0$ (cf. \eqref{Eq: axes on S3}) orthogonally (cf. \cite{wang2026spherical}*{Lemma 2.8}). 
	Hence, by applying the genus upper bound of \cite{ketover2016free}*{Theorem 3.2} in $\mathring{\mb S}^4/S^1$, we have that 
	\[\genus(\Gamma_n/S^1) \leq 2n-2.\] 
	It is now sufficient to show that $\genus(\Gamma_n/S^1)\in \{0,2n-2\}$. 
	
	 Consider the $\mb Z_2\times D_n$-invariant circle $\Xi_0\subset \mb S^4/S^1$ defined in \eqref{Eq: axes on S3} corresponding to the $z$-axis in $\mb R^3$ via the stereographic projection. 
	 By the above statement in (2) and \eqref{Eq: number of points in orbits on Xi_0}, we conclude similarly to \eqref{Eq: intersection with C0 for n even} that 
	 \begin{align}\label{Eq: intersection with C0}
	 	m=2+4j, \qquad \mbox{and}\qquad (0^{(2)},\pm \frac{1}{2},0 )\in \Gamma_n/S^1\cap \Xi_0.
	 \end{align}
	 
	 Consider the $\mb Z_n<D_n$ symmetries on $\mb S^4/S^1$ generated by $r\in D_n$ in \eqref{Eq: Dn reflectional action on S4/S1}, which is free on $(\mb S^4/S^1) \setminus \Xi_0$ with the fixed-point set $\Xi_0$. 
  Similar to the proof of Proposition \ref{Prop: new minimal hypersurface of even n}, we know that the $\mb Z_n$-action on $\Gamma_n/S^1$ is orientation-preserving, and $(\Gamma_n/S^1)/\mb Z_n$ is orientable without boundary. 
	 Hence, the quotient map $\Gamma_n/S^1\to (\Gamma_n/S^1)/\mb Z_n$ is an $n$-degree covering with $2+4j$ branch points $\{p_i\}_{i=1}^{2+4j}$ of ramification index $n=|\mb Z_n|$. 
	 By the Riemann-Hurwitz formula (\cite{ketover2016equivariant}*{(6.1)}\cite{carlotto2022free}*{Remark B.2}), there exist integers $\gamma,j\in \{0,1,\dots\}$ so that
  \[ 2-2\genus(\Gamma_n/S^1 ) = n(2-2\gamma)-(2+4j)(n-1), \]
	 and thus,
	 \begin{align}\label{Eq: R-H formula of intersection 2+4j}
	 	\genus(\Gamma_n/S^1 )=n\gamma   + 2j(n-1) . 
	 \end{align}
	For any $t\in (0,1)$, the $G_n$-hypersurface $\wti\Sigma_t$ given by Lemma \ref{Lem: Gn sweepout} has $\genus(\wti\Sigma_t/S^1)=2n-2$ and $\#((\wti\Sigma_t/S^1) \cap \Xi_0)=6$, which implies that $\genus((\wti\Sigma_t/S^1)/\mb Z_n) = 0$ by applying \eqref{Eq: R-H formula of intersection 2+4j} to $\wti\Sigma_t/S^1$. 
	By the proof of \cite{wang2026spherical}*{Theorem 6.6} and \cite{ketover2016equivariant}*{(1.3)}, we have $\gamma=\genus((\Gamma_n/S^1 )/\mb Z_n) =0$ and 
	\[ \genus(\Gamma_n/S^1 )=2j(n-1) .\] 
	 %
	 %
	 %
	 %
	 If $j\geq 2$, then $\genus( \Gamma_n/S^1)\geq 4n-4 > 2n-2$ as $n=2k+1\geq 3$, which contradicts $\genus( \Gamma_n/S^1)\leq 2n-2$. 
	 Hence, $j\in \{0,1\}$, and $\genus( \Gamma_n/S^1)\in \{0, 2n-2\}$. 
\end{proof}

The following theorem gives a new class of minimal hypersurfaces in $\mb S^4$ whose topological types are non-trivial principal $S^1$-bundles. 

\begin{theorem}\label{Thm: new minimal hypersurfaces of odd n}
	For any odd integer $n=2k+1\geq 3$, there exists an embedded $G_n$-invariant minimal hypersurface $\Gamma_n\subset \mb S^4$ that is a principal $S^1$-bundle over an orientable closed surface $\Gamma_n/S^1$ of $\genus(\Gamma_n/S^1)\in \{0, 2n-2=4k\}$ so that 
	\begin{itemize}
		\item[(i)] the $S^1$-bundle $\Gamma_n\to \Gamma_n/S^1$ is nontrivial, and its Euler class is a generator of $H^2(\Gamma_n/S^1;\mb Z)$;
		\item[(ii)] if $\genus(\Gamma_n/S^1)=0$, then $\Gamma_n$ is a non-equatorial hypersphere that is non-congruent to any of Hsiang's examples \cite{hsiang1983sphericalI};
		\item[(iii)] if $\genus(\Gamma_n/S^1)=2n-2\geq 4$, then $\pi_2(\Gamma_n)=0$  and $\Gamma_n$ is not diffeomorphic to any finite connected sum of copies of $S^2\times S^1$. 
	\end{itemize}
	Moreover, there exists $N\in\mb N$ so that $\Gamma_n/S^1$ has genus $2n-2$ for every odd integer $n\geq N$.  
\end{theorem}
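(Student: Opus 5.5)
The minimal hypersurface is the $\Gamma_n$ produced by Proposition \ref{Prop: new minimal hypersurface}. It is $G_n$-invariant, multiplicity one, and embedded, so the $S^1$-action on it is free and $\Gamma_n\to\Gamma_n/S^1$ is a principal $S^1$-bundle with $\genus(\Gamma_n/S^1)\in\{0,2n-2\}$. The remaining items are topological, except for the final large-$n$ statement, which is a compactness argument.

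\textbf{Item (i).} I would copy the setup of Claim \ref{Claim: Euler class}. We have $e(\Gamma_n)=i^*e(\mathring{\mb S}^4)$, and $e(\mathring{\mb S}^4)$ generates $H^2(\mathring{\mb S}^4/S^1;\mb Z)\cong\mb Z$, dual to the class of the slice $[S_0]$. By Proposition \ref{Prop: new minimal hypersurface}, $\Gamma_n/S^1=\bd\Omega_+$ with $A\cdot\Omega_+=\Omega_-$. Since $A$ swaps the poles, the two poles lie in different components. Removing small balls around the poles, $\Omega_+$ becomes a compact region in $S^2\times(-1,1)$ whose boundary is $\Gamma_n/S^1$ together with one end-sphere. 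Hence $i_*[\Gamma_n/S^1]=\pm[S_0]$ in $H_2(S^2\times(-1,1))$. Therefore $e(\Gamma_n)([\Gamma_n/S^1])=\pm1$, and since $H^2(\Gamma_n/S^1;\mb Z)\cong\mb Z$ via evaluation on the fundamental class, $e(\Gamma_n)$ is a generator.

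\textbf{Items (ii) and (iii).} For (ii), if the genus is $0$, the Euler number $\pm1$ gives $\Gamma_n\cong S^3$ via the Hopf fibration. It is non-equatorial because $E_0$ is the only $S^1$-invariant equator and $\mf L(\Pi_n)>\mc H^3(E_0)$ by \eqref{Eq: width estimates for odd n}. To separate it from Hsiang's examples, I would compare symmetry groups. Hsiang's hyperspheres are $O(2)\times O(3)$- or $O(3)\times O(2)$-type (cohomogeneity two) and lie in the "rotational" family. A cohomogeneity-two hypersphere invariant under the Hopf $S^1$ and the $D_n$-action would have to contain $S^1$ in a conjugate of its symmetry group. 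I would check that the Hopf circle acts freely on $\mb S^4\setminus\{\text{poles}\}$, while Hsiang's $SO(3)$ or $SO(4)$-orbit structure forces a circle subgroup with a fixed great circle or fixed 2-sphere. The exact argument would mirror \cite{wang2026spherical}*{\S 7}. For (iii), when the genus is $2n-2\ge4$, the base is aspherical, so $\pi_2(\Gamma_n)=0$ by the homotopy exact sequence, exactly as in Theorem \ref{Thm: new minimal hypersurfaces of even n}.

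\textbf{Large-$n$ statement (main obstacle).} I would argue by contradiction. Suppose that for infinitely many odd $n$ the quotient $\Gamma_n/S^1$ is a sphere, i.e.\ $\Gamma_n$ is a $3$-sphere. The areas satisfy $\mc H^3(E_0)<\mc H^3(\Gamma_n)<3\mc H^3(E_0)$. The index is bounded, since the $G_n$-index is at most $1$ and the $S^1$-equivariant index is controlled via the orbit-space min-max. Along a subsequence, $\Gamma_n$ converges as varifolds to a stationary limit $V$ invariant under $S^1$, $A$, $s$, and all rotations $r$ in the limit group $SO(2)$ acting on $w$. The limit $V$ is therefore of cohomogeneity at most two. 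Its quotient is a curve in the two-dimensional orbit space $(\mb S^4/S^1)/SO(2)$, which is classified as a Hsiang--Lawson-type ODE solution. The area bound together with $A$-antisymmetry should force $V$ to be one of a few candidates: $E_0$ with multiplicity $1$, or $E_0$ plus something. The width gap $\mf L(\Pi_n)>\mc H^3(E_0)$ must persist uniformly in $n$, which I would obtain from a uniform-in-$n$ lower bound via the $1$-sweepout argument. With that, $\mc H^3(V)>\mc H^3(E_0)$ rules out the bare equator.

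The remaining possibility is a smooth rotationally invariant limit sphere. If the convergence is smooth, the intersection count with $\Xi_0$ stays $2$ along the sequence. On the other hand, $D_n$-equivariant $S^2$'s with $m=2$ passing through the $\widehat\Xi_l$ configuration should force area concentration near the necks, contradicting smooth convergence. This is where I expect the real difficulty. I would first try to show that a genus-$0$ $\Gamma_n$ must converge smoothly with multiplicity one to an $O(2)$-invariant sphere. I would then use the fact that the sweepout surfaces have $\Gamma_n/S^1\cap\Xi_0$ containing the $6$-point configuration near the equator, while a genus-zero $\Gamma_n$ has only $2$ points. Combining this with the convergence, I would aim to contradict the strict width inequality $\mf L(\Pi_n)<\mc H^3(V)$ obtained by comparing with a genus-$(2n-2)$ competitor desingularizing $V\cup E_0$.
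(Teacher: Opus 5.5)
Items (i) and (iii) are fine. For (i), deducing $i_*[\Gamma_n/S^1]=\pm[S_0]$ from the fact that $A$ swaps the poles and the two sides is a valid, more hands-on substitute for the paper's argument via non-orientability of $\Gamma_n/{\rm Pin}_-(2)$ in $RP^3$. However, there are genuine gaps in (ii) and in the large-$n$ statement.

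\textbf{Non-congruence in (ii).} Your symmetry comparison cannot distinguish the two families.

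Hsiang's hyperspheres in $\mb S^4$ are $O(2)\times O(2)$-invariant, for the action on $(x_1,x_2)$ and $(x_3,x_4)$. An $O(2)\times O(3)$-action would have cohomogeneity one on $\mb S^4$. The diagonal circle of $SO(2)\times SO(2)$ is exactly the suspended Hopf action. So Hsiang's spheres are themselves Hopf-invariant. Neither the freeness of the Hopf circle nor the presence of other circle subgroups with fixed $2$-spheres in their symmetry group is an obstruction. It would only become one if you showed that $\Gamma_n$ has no extra continuous symmetries, which you do not.

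What is needed is a geometric invariant, and the paper uses the height $x$:
\begin{itemize}
\item Each $\wti\Sigma_{2i+1}$ meets its extremal slices $\{x=\pm r_i\}$ in a single great circle.
\item Every nonempty slice $\Gamma_n\cap\{x=t\}$ with $t\neq0$ contains at least two $S^1$-orbits. This is because its quotient is invariant under $\langle A\circ s\rangle$ and $\langle r\rangle$, which have no common fixed point on $S_t/S^1$.
\end{itemize}
To compare slices, a putative congruence must preserve the $x$-axis. The paper gets this from the maximal torus theorem. It conjugates $F\,T(2)F^{-1}$ inside ${\rm Stab}_0(\Gamma_n)$ to a maximal torus containing the Hopf circle, and then compares fixed-point sets in $\mb R^5$.

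\textbf{The large-$n$ statement.} Your plan rests on three things that are not in place:
\begin{itemize}
\item a uniform index bound, which you do not justify;
\item a classification of $T^2$-invariant stationary varifolds of mass below $3\mc H^3(E_0)$, which is not available (the limit need not be smooth, and even smooth $T^2$-invariant minimal hypersurfaces in $\mb S^4$ are not classified);
\item a final contradiction that you only aim for.
\end{itemize}
The uniform width gap you want is simply the area gap for non-totally-geodesic minimal hypersurfaces \cite{cheng1984heat}; it does not come from the sweepout.

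The missing idea is a rigidity fact that holds for every odd $n$. The element $sr^{n-l}$ swaps the poles, hence swaps $\Omega_\pm$, hence acts on $\Gamma_n/S^1$ as an orientation-reversing involution. Its fixed set on the surface is therefore empty or a union of circles. It is nonempty, since $(0^{(2)},\pm\frac12,0)\in\Gamma_n/S^1$ by \eqref{Eq: intersection with C0}. It lies in the ambient fixed set $\Upsilon_l$ (Lemma \ref{Lem: fixed points set of s}). Hence $\cup_{l=1}^n\Upsilon_l\subset\Gamma_n/S^1$.

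Now suppose genus $0$ occurred along $n_i\to\infty$. The argument then runs as follows.
\begin{itemize}
\item The great circles $\Upsilon_l$ become dense in $E_0/S^1$, so by monotonicity $E_0/S^1\subset\spt\|W\|$.
\item With genus uniformly bounded, White's compactness theorem gives smooth convergence with multiplicity.
\item Multiplicity $\geq2$ would make $E_0/S^1$ stable for $\wti g_{\mb S^4/S^1}$, which is impossible since this metric has positive Ricci curvature.
\item So the convergence is single-sheeted, and connectedness forces $\Gamma_{n_i}/S^1$ to be a small graph over $E_0/S^1$.
\item Then $\mc H^3(\Gamma_{n_i})\to\mc H^3(E_0)$. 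Since $\mc H^3(\Gamma_n)>\mc H^3(E_0)$, $\Gamma_{n_i}$ is not totally geodesic, so the area gap is contradicted.
\end{itemize}
No identification of the limit beyond $E_0$ is needed.
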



\begin{proof}
	{\bf Part I}. 
    For any odd integer $n=2k+1\geq 3$, it follows from Proposition \ref{Prop: new minimal hypersurface} that there is an embedded $G_n$-invariant minimal hypersurface $\Gamma_n\subset \mathring{\mb S}^4$ with $\genus(\Gamma_n/S^1)\in\{0,2n-2\}$. 
	Since the $S^1$-action on $\Gamma_n\subset\mathring{\mb S}^4$ is free, we see that $\Gamma_n\to\Gamma_n/S^1$ is a principal $S^1$-bundle.

	By the proof of Claim \ref{Claim: Euler class}, the Euler class of the principal $S^1$-bundle $\Gamma_n\to\Gamma_n/S^1$ is $e(\Gamma_n):=i^*e(\mathring{\mb S}^4)\in H^2(\Gamma_n/S^1;\mb Z)$, where $i:\Gamma_n/S^1\to \mathring{\mb S}^4/S^1$ is the embedding, and $e(\mathring{\mb S}^4)\in H^2(\mathring{\mb S}^4/S^1;\mb Z)\cong H^2(E_0/S^1;\mb Z)\cong \mb Z$ is a generator. 
	Additionally, $\Gamma_n/S^1$ is homologous to $E_0/S^1$ in $\mathring{\mb S}^4/S^1$ since $\Gamma_n/{\rm Pin}_-(2)$ and $E_0/{\rm Pin}_-(2)$ are both non-orientable in $\mb S^4/{\rm Pin}_-(2)=RP^3$. 
	Hence, 
	\[ (e(\Gamma_n))([\Gamma_n/S^1])= (i^* e(\mathring{\mb S}^4) )([\Gamma_n/S^1] )=(e(\mathring{\mb S}^4)) (i_*[\Gamma_n/S^1]) =(e(\mathring{\mb S}^4))([E_0/S^1])=\pm 1,\]
	which implies that $e(\Gamma_n)\in H^2(\Gamma_n/S^1;\mb Z)$ is a generator, and the principal $S^1$-bundle $\Gamma_n\to \Gamma_n/S^1$ is non-trivial.  
	
	
	\medskip
    {\bf Part II}. 
	If $\genus(\Gamma_n/S^1)=0$, then $\Gamma_n$ is a minimal hypersphere in $\mb S^4$ (cf. \cite{wang2026spherical}*{Lemma 7.6}), which is non-equatorial since $\mc H^3(\Gamma_n)> \mc H^3(E_0)$ by Proposition \ref{Prop: new minimal hypersurface}. 
	By the constructions of Hsiang \cite{hsiang1983sphericalI}, there are non-equatorial minimal hyperspheres $\{\wti\Sigma_{2i+1}\}_{i=1}^\infty$ that are invariant under the action of $O(2)\times O(2)$ on $\mb S^4$ defined by $(Q_1,Q_2)\cdot (x_1,\dots, x_5)=(Q_1(x_1,x_2), Q_2(x_3,x_4), x_5)$ for any $Q_1,Q_2\in O(2)$. 
	By \cite{hsiang1983sphericalI}*{Lemma 2} and the central symmetry given in \cite{hsiang1983sphericalI}*{Theorem 1}, for each of Hsiang's examples $\wti\Sigma_{2i+1}$, there exists $r_i \in (0,1)$ so that the family of geodesic spheres $S_{\rho}:=\{(x_1,\dots,x_4,\rho)\in \mb S^4\}$, $\rho\in (-1,1)$, satisfies that 
	\begin{itemize}
		\item $S_\rho\cap \wti\Sigma_{2i+1}=\emptyset$ for $\rho\in (-1,-r_i)\cup (r_i,1)$;
		\item $S_{\pm r_i}\cap \wti\Sigma_{2i+1} $ is a single great circle on $S_{\pm r_i}$.
	\end{itemize}
	However, for any $t\in (-1,0)\cup (0,1)$ with $S_t\cap\Gamma_n\neq\emptyset$, we have that $(S_{t}\cap \Gamma_n)/S^1$ is non-empty and invariant under the actions of $\mb Z_2:=\langle A\circ s \rangle$ and $\mb Z_n:=\langle r\rangle$, which implies that $S_{t}\cap \Gamma_n$ contains at least two disjoint circles since $\max\{\#(\langle A\circ s \rangle\cdot p), \#(\langle r\rangle\cdot p)\}\geq 2$ for any $p\in S_t/S^1$. 
    Using this observation, we claim that $\Gamma_n$ is non-congruent to any non-equatorial minimal hyperspheres $\{\wti\Sigma_{2i+1}\}_{i\geq 1}$ constructed by Hsiang in \cite{hsiang1983sphericalI}. 

    Specifically, suppose there exists $F\in O(5)$ so that $\Gamma_n= F(\wti\Sigma_{2i+1})$ for some $i\geq 1$. 
    Then, since $\wti\Sigma_{2i+1}$ is invariant under the action of $T(2):=SO(2)\times SO(2)$, we conclude that 
    \[T':= F\cdot T(2) \cdot F^{-1} \subset {\rm Stab}_0(\Gamma_n)\subset{\rm Stab}(\Gamma_n):=\{g\in O(5): g\cdot \Gamma_n=\Gamma_n\} ,\]
    where ${\rm Stab}_0(\Gamma_n)$ is the identity component of the compact Lie group ${\rm Stab}(\Gamma_n)$. 
    Note that $T(2)=SO(2)\times SO(2)$ is a maximal torus in $SO(5)$ (cf. \cite{brocker1985representations}*{Chapter IV, Theorem 3.4}), where `torus' means it is a compact connected abelian subgroup, and `maximal' means it is not contained in any strictly larger torus in $SO(5)$. 
    Hence, it follows that $T'\subset {\rm Stab}_0(\Gamma_n)\subset SO(5)$ is a maximal torus in ${\rm Stab}_0(\Gamma_n)$. 
    Meanwhile, the $S^1$-invariance of $\Gamma_n$ indicates that $T(1):= {\rm diag}(T(2)) = \{(Q,Q): Q\in SO(2)\}$ is a connected abelian subgroup (i.e. a $1$-dimensional torus) of ${\rm Stab}_0(\Gamma_n)$. 
    Therefore, since $\dim({\rm Stab}_0(\Gamma_n))\leq \dim(SO(5))$ is finite, the non-empty collection $\{G\subset {\rm Stab}_0(\Gamma_n): G \mbox{ is a torus with } T(1)\subseteq G\}$ admits an element $T_{max}$ of maximal dimension, which must be a maximal torus in ${\rm Stab}_0(\Gamma_n)$. 
    By the maximal torus theorem \cite{brocker1985representations}*{Chapter IV, Theorem 1.6}, there exists $F'\in {\rm Stab}_0(\Gamma_n)$ so that $F'\cdot T'\cdot (F')^{-1} = T_{max}$. 
    In particular, for $\wti F:=F'\cdot F \in O(5)$, we have $T(1)\subset T_{max} = \wti F\cdot T(2)\cdot \wti F^{-1}\subset {\rm Stab}_0(\Gamma_n)$. 
    
    Consider the fixed-point set ${\rm Fix}_{\mb R^5}(T_{\max})$ of $T_{max}$ in $\mb R^5$. 
    Since $T(1)\subset T_{max}$, we see that 
    \[ {\rm Fix}_{\mb R^5}(T_{max}) \subset {\rm Fix}_{\mb R^5}(T(1)) = {\rm Fix}_{\mb R^5}(T(2)) = l:=\{(0^{(2)},0^{(2)},x)\in\mb R^5\} .\]
    Meanwhile, since $T_{max}$ is conjugate to $T(2)$, we see that $\dim({\rm Fix}_{\mb R^5}(T_{max}))=\dim({\rm Fix}_{\mb R^5}(T(2)))=1$, and thus ${\rm Fix}_{\mb R^5}(T_{max})=l$. 
    Note that $\wti F({\rm Fix}_{\mb R^5}(T(2))) = {\rm Fix}_{\mb R^5}(\wti F \cdot T(2) \cdot \wti F^{-1})$. 
    Therefore, 
    \[\wti F(l)=\wti F({\rm Fix}_{\mb R^5}(T(2))) = {\rm Fix}_{\mb R^5}(\wti F \cdot T(2) \cdot \wti F^{-1}) = {\rm Fix}_{\mb R^5}(T_{max}) = l.\]
    As $l$ is $\wti F$-invariant, we conclude that $\wti F$ is expressed as the matrix ${\rm diag}(Q',\delta)$, where $Q'\in O(4)$ and $\delta\in\{\pm 1\}$. 
    Hence, $\wti F(S_\rho)\in \{S_\rho,S_{-\rho}\}$, 
    and $\wti F(\wti\Sigma_{2i+1}\cap S_{r_i}) $ is either $ \Gamma_n\cap S_{r_i}$ or $ \Gamma_n\cap S_{-r_i}$, which contradicts the previous observation. 
	This proves (ii). 
	
	\medskip
    {\bf Part III}. 
	If $\genus(\Gamma_n/S^1)=2n-2\geq 4$, then $\pi_2(\Gamma_n/S^1)=0$, and thus $\pi_2(\Gamma_n)=0$ as in the proof of Theorem \ref{Thm: new minimal hypersurfaces of even n}.  
	Since $\pi_2(\#_m (S^2\times S^1))\neq 0$ for all $m\geq 1$, we conclude that $\Gamma_n$ is not diffeomorphic to any finite connected sum of copies of $S^2\times S^1$.  This proves (iii).

	\medskip
    {\bf Part IV}. 
	For the last statement in the above theorem, suppose for contradiction that there exists a sequence $\{\Gamma_{n_i}\}_{i\in\mb N}$ with $n_i\to \infty$ as $i\to\infty$ so that 
	\[\genus(\Gamma_{n_i}/S^1)=0\qquad \mbox{for every $i$}.\] 
	Since $\sup_{i\in\mb N}\mc H^3_{g_{\mb S^4}}(\Gamma_{n_i}) \leq  3\mc H^3_{g_{\mb S^4}}(E_0)$, we know that $\Gamma_{n_i}$ converges (up to a subsequence) to a stationary $S^1$-invariant $3$-varifold $V$ in $\mb S^4$. 
	Equivalently, there exists a stationary $2$-varifold $W$ in $\mb S^4/S^1$ (under the weighted metric $\wti g_{\mb S^4/S^1}$) so that for any open set $U\subset\subset \mathring{\mb S}^4/S^1$, $(\Gamma_{n_i}/S^1)\llcorner U$ converges (up to a subsequence) to $W\llcorner U$ in the sense of varifolds. 
	
	For any fixed $n_i$, consider the orientation-preserving $\mb Z_2$-action on $\mb S^4/S^1$ generated by $sr^{n_i-l}\in D_{n_i}$ in \eqref{Eq: Dn reflectional action on S4/S1}, where $l=1,\cdots, n_i$.
	Recall that $\Gamma_{n_i}/S^1$ separates the north and south poles of $\mb S^4/S^1$ into different regions. 
	Therefore, $sr^{n_i-l}$ is orientation-reversing on $\Gamma_{n_i}/S^1$, which indicates that the fixed-point set ${\rm Fix}_{\Gamma_{n_i}/S^1}(sr^{n_i-l})$ of $sr^{n_i-l}$ on $\Gamma_{n_i}/S^1$ is either empty or a disjoint union of simple closed curves. 
	By \eqref{Eq: intersection with C0}, ${\rm Fix}_{\Gamma_{n_i}/S^1}(sr^{n_i-l})\neq \emptyset$. 
	Meanwhile, the fixed-point set of $sr^{n_i-l}$ in the whole orbit space $\mb S^4/S^1$ is the great circle $\Upsilon_{l}\subset E_0/S^1\cong \mb S^2_{1/2}$ (cf. \eqref{Eq: circle on E0/S1}) by Lemma \ref{Lem: fixed points set of s}. 
	Together, we conclude that 
	\[ \Upsilon_{l}=  {\rm Fix}_{\Gamma_{n_i}/S^1}(  sr^{n_i-l}) \subset \Gamma_{n_i}/S^1, \qquad \forall l\in \{1,\dots,n_i\} .\]
	For clarity, denote by $\{\Upsilon_l^{n_i}\}_{l=1}^{n_i}$ the great circles in \eqref{Eq: circle on E0/S1} associated to the $D_{n_i}$-action. 
	
	We now claim that $E_0/S^1$ is contained in the support $\spt(\|W\|)$ of the limit varifold $W$. 
	Indeed, if $p\in (E_0/S^1)\setminus \spt(\|W\|)$, then there exists a small neighborhood $B$ of $p$ in $\mb S^4/S^1$ so that $\|W\|(B)=0$. 
	However, for any sufficiently large $n_i$, $\cup_{l=1}^{n_i} \Upsilon_{l}^{n_i}\subset \Gamma_{n_i}/S^1$ intersects $B$. 
	By the monotonicity formula, $ \| \Gamma_{n_i}/S^1\|(B) $ has a positive lower bound independent of $n_i$, which contradicts $\|W\|(B)=0$. 
	
	Next, take a smooth compact neighborhood $\Omega:= \{(w,t,x)\in \mb S^4/S^1: |x|\leq 1-\epsilon\}$ of $E_0/S^1$ so that $\bd \Omega$ is transversal to every $\Gamma_{n_i}/S^1$ and $\Omega$ is $\mb Z_2\times D_{n_i}$-invariant for every $n_i$. 
	Under the (non-degenerate) weighted metric $\wti g_{\mb S^4/S^1}$ on $\Omega$, the area and the genus of $(\Gamma_{n_i}/S^1) \cap \Omega$ are uniformly bounded from above. 
	Hence, by the compactness theorem of White \cite{white2018compactness}*{Theorem 1.1}, $(\Gamma_{n_i}/S^1) \cap \Omega$ converges (up to a subsequence) to a smooth, embedded, minimal surface $\Sigma\subset \Omega$ with integer multiplicity on each component. 
	By the previous claim, $E_0/S^1$ must be a connected component of $\Sigma$ with some multiplicity $m\in \{1,2,\dots\}$. 
	If $m\geq 2$, then it follows from \cite{white2018compactness}*{Theorem 1.1, (2)} that $E_0/S^1$ is stable in $\mb S^4/S^1$ under the weighted metric $\wti g_{\mb S^4/S^1}$, which contradicts Lemma \ref{Lem: properties of S1-action}(ii). 
	Therefore, $m=1$, and \cite{white2018compactness}*{Theorem 1.1, (1)} implies that the convergence to $E_0/S^1$ is smooth.
	Hence, for $n_i$ sufficiently large, the connectedness of $\Gamma_{n_i}/S^1$ implies that $\Gamma_{n_i}/S^1$ is entirely contained in $\Omega$, and is a small $C^\infty$-perturbation of $E_0/S^1$. 
	
	In particular, we have $\mc H^3_{g_{\mb S^4}}(\Gamma_{n_i})\to \mc H^3_{g_{\mb S^4}}(E_0)$ as $i\to\infty$. 
	Note that there exists $\epsilon>0$ so that any minimal hypersurface $S\subset \mb S^4$ that is not totally geodesic must have $\mc H^3(S)>\mc H^3(E_0)+\epsilon$ (cf. \cite{cheng1984heat}). 
	Hence, $\Gamma_{n_i}\equiv E_0$ for all $n_i$ large enough, which contradicts $\mc H^3_{g_{\mb S^4}}(\Gamma_{n_i})=\mf L(\Pi_{n_i}) > \mc H^3_{g_{\mb S^4}}(E_0)$ in Proposition \ref{Prop: new minimal hypersurface}. 
	Finally, we conclude that $\genus(\Gamma_n/S^1)=2n-2$ for all $n$ large enough.  
\end{proof}

\section{\texorpdfstring{Embedded minimal Seifert fibered manifolds in $\mb S^4$}{Embedded minimal Seifert fibered manifolds in S4}}

In this section, we generalize the suspended Hopf action by adding weights, which can be used to construct minimal $S^1$-bundles over orbifolds (i.e. Seifert fibered manifolds) in $\mb S^4$. 

\subsection{\texorpdfstring{Weighted Hopf action on $\mb S^3$}{Weighted Hopf action on S3}}

Given nonzero coprime integers $p,q\in \mb Z\setminus\{0\}$, 
the {\em weighted Hopf $S^1_{p,q}$-action} on $\mb S^3=\{(z_1,z_2)\in\mb C^2:|z_1|^2+|z_2|^2=1\}$ is defined by 
\[ e^{i\theta}\cdot (z_1,z_2) := (e^{i\theta p} z_1, e^{i\theta q} z_2), \qquad \forall e^{i\theta}\in S^1=\{\lambda\in \mb C :|\lambda|=1\},\]
so that the quotient map $h_{p,q}:\mb S^3\to \mb S^2_{1/2}:=\{(w,t)\in \mb C\times \mb R: |w|^2+t^2=1/4\}$ is the continuous extension of the following map on $\{(z_1,z_2)\in\mb S^3: z_1z_2\neq 0\}$:
\begin{align}\label{Eq: weighted Hopf map}
	h_{p,q}(z_1,z_2) 
	:= \left(|z_1||z_2| e^{i(q \arg(z_1) - p\arg(z_2) )}, \frac{|z_1|^2-|z_2|^2}{2} \right)=  \left( \frac{\sin(\eta)}{2} e^{i \vartheta}, \frac{\cos(\eta)}{2}  \right) ,
\end{align}
where $\arg(z)$ is the argument of $z\in \mb C$, $\vartheta:=q \arg(z_1) - p\arg(z_2)$, and $\eta\in [0,\pi]$ satisfies $|z_1|=\cos(\eta/2)$, $|z_2|=\sin(\eta/2)$. 
If $|p|>1$, then $\{(z_1,0^{(2)}): |z_1|=1\}$ is an exceptional $S^1_{p,q}$-orbit with the isotropy group $\mb Z_{|p|}=\langle e^{i \frac{2\pi}{p}}\rangle$. 
Similarly, if $|q|>1$, then $\{(0^{(2)}, z_2): |z_2|=1\}$ is an exceptional orbit with the isotropy group $\mb Z_{|q|}=\langle e^{i\frac{2\pi}{q}}\rangle$. 
Hence, the orbit space $\mb S^3/S^1_{p,q}$ is topologically a $2$-sphere with two possibly singular points at the north and south poles
\begin{align}\label{Eq: singular points on S3/S1pq}
	h_{p,q}(\{(z_1,0^{(2)}):|z_1|=1\})=(0^{(2)},\frac{1}{2}) \quad \mbox{and}\quad h_{p,q}(\{(0^{(2)}, z_2):|z_2|=1\})=(0^{(2)}, -\frac{1}{2})
\end{align} 
respectively. 
Indeed, under the parameterization $\mb S^2_{1/2}=\{ \frac{1}{2} (\sin(\eta)e^{i\vartheta}, \cos(\eta)): \eta\in[0,\pi],\vartheta\in[0,2\pi]\}$, we can define the metric $g_{\mb S^3/S^1_{p,q}}$ on $\mb S^3/S^1_{p,q}$ by 
\begin{align}\label{Eq: reduced metric of S3/S1pq}
	g_{\mb S^3/S^1_{p,q}}:= \frac{1}{4}\left(d \eta^2 + \frac{\sin^2(\eta)}{p^2\cos^2(\frac{\eta}{2} )+ q^2\sin^2(\frac{\eta}{2})} d\vartheta^2 \right)
\end{align}
so that the quotient map $h_{p,q}$ in \eqref{Eq: weighted Hopf map} is a Riemannian submersion on $\{(z_1,z_2)\in \mb S^3: z_1z_2\neq 0\}$. 
Note that $g_{\mb S^3/S^1_{p,q}}$ can have orbifold singularities at $\eta=0$ (if $|p|>1$) and  $\eta=\pi$ (if $|q|>1$). 

Additionally, using the coordinates $\eta=\arccos(|z_1|^2-|z_2|^2)\in [0,\pi]$ and $\vartheta=q \arg(z_1) - p\arg(z_2)\in [0,2\pi]$, the reflectional $D_n$-action \eqref{Eq: Dn reflectional action on R3} on $\mb S^3/S^1_{p,q}$ is generated by 
\begin{align}\label{Eq: reflectional Dn action in S3/S1pq}
	r: (\eta,\vartheta)\mapsto (\eta, \vartheta+\frac{2\pi}{n} ) \quad {\rm and}\quad s: (\eta,\vartheta) \mapsto (\eta, -\vartheta),
\end{align}
which remains an isometric action on $\mb S^3/S^1_{p,q}$ under the metric $g_{\mb S^3/S^1_{p,q}}$. 
Moreover, for any $(\eta,\vartheta)\in (0,\pi)\times [0, 2\pi]$, we have 
\[h_{p,q}^{-1}\left(\sin(\eta)e^{i\vartheta}/2, \cos(\eta)/2\right) = \left\{ \left(\cos(\eta/2) e^{i(p\theta+ \vartheta/q)} , \sin(\eta/2)e^{i q\theta } \right)\in\mb S^3: \theta\in [0,2\pi]  \right\}\]
is a circle on the torus $T=\{(z_1,z_2)\in \mb S^3: |z_1|=\cos(\eta/2)\}$ with length 
\begin{align}\label{Eq: orbit volume of S1pq action on S3}
	{\rm Length}(h_{p,q}^{-1}(\frac{\sin(\eta)e^{i\vartheta}}{2}, \frac{\cos(\eta)}{2} ) ) = 2\pi \sqrt {p^2\cos^2(\frac{\eta}{2}) + q^2\sin^2(\frac{\eta}{2})}=2\pi\sqrt{p^2|z_1|^2+q^2|z_2|^2},
\end{align}
which is invariant under the reflectional $D_n$-action.

\subsection{\texorpdfstring{Suspended weighted Hopf action and $G_n^{p,q}$-action on $\mb S^4$}{Suspended weighted Hopf action and Gnpq-action on S4}}
Next, we generalize the suspended Hopf action on $\mb S^4$ by the following {\em suspended $S^1_{p,q}$-action}: 
\begin{align}\label{Eq: suspended weighted Hopf action}
	e^{i\theta}=R_{\theta}^{p,q}: (z_1,z_2,x)\in \mb S^4 \mapsto (e^{i\theta p} z_1, e^{i\theta q} z_2, x)\in \mb S^4, 
\end{align}
where $e^{i\theta}\in S^1=\{\lambda\in \mb C :|\lambda|=1\}$. 
Then, we have the quotient map $\pi_{p,q}:\mb S^4\to \mb S^4/S^1_{p,q}$ as the continuous extension of the following map on $\{(z_1,z_2,x)\in\mb S^4: z_1z_2\neq 0\}$:
\begin{align}\label{Eq: quotient of weighted Hopf action}
	\pi_{p,q}(z_1,z_2,x) 
	& :=  \left(\frac{|z_1||z_2| e^{iq \arg(z_1) - p\arg(z_2)}}{\sqrt{|z_1|^2+|z_2|^2}}, \frac{|z_1|^2-|z_2|^2}{2 \sqrt{|z_1|^2+|z_2|^2}}, x \right).
\end{align} 
Additionally, $\mb S^4/S^1_{p,q}$ is still a topological $3$-sphere that can be parameterized by an ellipsoid:
\begin{align}\label{Eq: parameterize S4/S1pq}
	\mb S^4/S^1_{p,q}&= \left\{(w,t,x)\in \mb C\times \mb R\times \mb R: 4(|w|^2+t^2)+x^2=1 \right\}\nonumber 
	\\
	&=\left\{\left(\frac{1}{2}\sin(\varrho)\sin(\eta)e^{i\vartheta}, \frac{1}{2}\sin(\varrho)\cos(\eta), \cos(\varrho) \right ) : \varrho,\eta\in [0,\pi], \vartheta\in [0,2\pi] \right\},
\end{align}
where $\varrho:=\arccos(x)$, 
$\eta=\arccos(\frac{|z_1|^2-|z_2|^2}{|z_1|^2+|z_2|^2})$
and $\vartheta :=q \arg(z_1) - p\arg(z_2)$. 
Note that apart from the singularities at the north and south poles $(0^{(2)},0,\pm 1)\in \mb S^4/S^1_{p,q}$, there are also two possible singular arcs $\Xi_0^\pm$ (cf. \eqref{Eq: half axis z on S3})
by suspending the possible singular points \eqref{Eq: singular points on S3/S1pq} in $E_0/S^1_{p,q}$ along the $x=\cos(\varrho)$ coordinate. 
Then, we define the metric $g_{\mb S^4/S^1_{p,q}}$ by 
\begin{align}\label{Eq: reduced metric in S4/S1pq}
	g_{\mb S^4/S^1_{p,q}}:= d\varrho^2+ \sin^2(\varrho) g_{\mb S^3/S^1_{p,q}} 
\end{align}
so that the quotient map $\pi_{p,q}$ in \eqref{Eq: quotient of weighted Hopf action} is a Riemannian submersion when restricted to 
\begin{align}
	 \mathring{\mb S}^4_{p,q}:= \mb S^4\setminus \pi_{p,q}^{-1}(\{(0^{(2)},0,\pm 1)\}\cup \Xi_0^\pm). 
\end{align}
Additionally, for any $(w,t,x)\in \mb S^4/S^1_{p,q}$ and $(z_1,z_2,x)\in \pi_{p,q}^{-1}(w,t,x)$, we have $|z_1|^2+|z_2|^2=1-x^2$ and $|z_1|^2-|z_2|^2= 2t\sqrt{1-x^2}$, which implies $|z_1|^2=\frac{1-x^2}{2}+t\sqrt{1-x^2}$ and $|z_2|^2=\frac{1-x^2}{2}-t\sqrt{1-x^2}$. 
Hence, combined with \eqref{Eq: orbit volume of S1pq action on S3}, the orbit volume function $V_{p,q}$ on $\mathring{\mb S}^4_{p,q}/S^1_{p,q}$ is 
\begin{align}\label{Eq: orbit volume of S1pq action on S4}
	V_{p,q}(w,t,x):= {\rm Length}(\pi_{p,q}^{-1}(w,t,x)) = 2\pi \sqrt{\frac{p^2+q^2}{2}(1-x^2)+ (p^2-q^2)t\sqrt{1-x^2} } .
\end{align}

For general coprime integers $p,q\in \mb Z\setminus\{0\}$, the antipodal map $A$ in \eqref{Eq: antipodal} may fail to be an isometry on $\mb S^4/S^1_{p,q}$ under the metric $g_{\mb S^4/S^1_{p,q}}$, and the volume function $V_{p,q}$ of $S^1_{p,q}$-orbits may also fail to be $A$-invariant.  
Nevertheless, by \eqref{Eq: reduced metric of S3/S1pq}, \eqref{Eq: reflectional Dn action in S3/S1pq}, and \eqref{Eq: reduced metric in S4/S1pq}, we still have the {\em modified reflectional $D_n$-action on $\mb S^4/S^1_{p,q}$}, $n\in \{2,3,\dots\}$, generated by 
\begin{align}\label{Eq: Dn reflectional action on S4/S1pq}
	r: (w,t,x)\mapsto (e^{i\frac{2\pi}{n} }w, t,x) \qquad {\rm and}\qquad s: (w,t,x)\mapsto (\bar w, t,-x)
\end{align}
as an orientation-preserving isometric $D_n$-action on $(\mb S^4/S^1_{p,q}, g_{\mb S^4/S^1_{p,q}})$ so that $V_{p,q}$ is $D_n$-invariant. 
Hence, after continuously extending $V_{p,q}$ from $\mathring{\mb S}^4_{p,q}/S^1_{p,q}$ to $\mb S^4/S^1_{p,q}$, we also have the $D_n$-invariant weighted metric $\wti g_{\mb S^4/S^1_{p,q}}:= V_{p,q}\cdot g_{\mb S^4/S^1_{p,q}}$ so that  
\[\mc H^3_{g_{\mb S^4}}(\Sigma) = \mc H^2_{\wti g_{\mb S^4/S^1_{p,q}}}(\pi_{p,q}(\Sigma))\] 
for any $S^1_{p,q}$-invariant hypersurface $\Sigma\subset \mb S^4$. 

This $D_n$-action on $\mb S^4/S^1_{p,q}$ can also be lifted into an isometric $D_n$-action on $\mb S^4$. 
Specifically, since $(p,q)=1$, there exist $a,b\in \mb Z$ so that 
\begin{align}\label{Eq: ab corresponding to pq}
     qa+pb=1
\end{align}
by B{\'e}zout's theorem. 
Then, define the modified reflectional $D_n$-action on $\mb S^4$ to be generated by 
\begin{align}\label{Eq: Dn reflectional action on S4pq}
	\hat r: (z_1,z_2,x)\mapsto  (e^{\frac{2\pi a}{n} i}z_1, e^{-\frac{2\pi b}{n}i} z_2, x)
    \quad {\rm and }\quad 
    \hat s: (z_1,z_2,x)\mapsto (\bar z_1, \bar z_2, -x).
\end{align}
One verifies that $\pi_{p,q}\circ \hat r= r\circ \pi_{p,q}$ and $\pi_{p,q}\circ \hat s=s\circ \pi_{p,q}$ by \eqref{Eq: quotient of weighted Hopf action} and \eqref{Eq: Dn reflectional action on S4/S1pq}. 
Additionally, when $p=q=1$ and $(a, b)=(1,0)$, the above $D_n$-action reduces to \eqref{Eq: Dn reflectional action on S4}. 

Note that the pair $a,b\in \mb Z$ is not unique since $a_m=a+mp$ and $b_m=b-mq$ also satisfy $qa_m+pb_m=1$ for all $m\in\mb Z$. 
Nevertheless, since 
\begin{align}\label{Eq: different pairs of ab}
	R_{ 2\pi m/n}^{p,q} \circ \hat r: (z_1,z_2,x)\mapsto (e^{\frac{2\pi (a+mp)}{n} i}z_1, e^{-\frac{2\pi (b-mq)}{n}i} z_2 , x)= (e^{\frac{2\pi a_m}{n} i}z_1,  e^{-\frac{2\pi b_m}{n}i} z_2 , x),
\end{align}
the suspended $S^1_{p,q}$-action extends uniquely through the modified reflectional $D_n$-action on $\mb S^4$. 
To be exact, we see from the definitions in \eqref{Eq: suspended weighted Hopf action} and \eqref{Eq: Dn reflectional action on S4pq} that 
\[ 
	R_{\theta}^{p,q}\circ \hat r=\hat r\circ R_{\theta}^{p,q}
	\qquad {\rm and} \qquad 
	R_{\theta}^{p,q}\circ \hat s=\hat s\circ R_{-\theta}^{p,q},
\]
where $R_{\theta}^{p,q}\in S^1_{p,q}$ and $\hat r,\hat s\in D_n$. 
Hence, consider the homomorphism $\psi: D_n\to {\rm Aut}(S^1)$ generated by $\psi(\hat r):= id$ and $\psi(\hat s): e^{i\theta}\mapsto e^{-i\theta}$. 
We then have a Lie group 
\begin{align}\label{Eq: Gnpq action on S4}
	G_{n}^{p,q} := S^1\rtimes_\psi D_n
\end{align}
acting isometrically on $\mb S^4$ by $(e^{i\theta}, \hat s^{k} \hat r^l)\cdot (z_1,z_2,x):=R_\theta^{p,q}\circ \hat s^{k} \hat r^l(z_1,z_2,x)$, which is well-defined. 
By \eqref{Eq: different pairs of ab}, different choices of $a,b$ only provide different generators so that $\{S^1_{p,q}, \hat r, \hat s\}$ always generates the same $G_n^{p,q}$-action. 
Moreover, it is easy to verify that the $G_n^{p,q}$-action admits no special exceptional $G_n^{p,q}$-orbit on $\mb S^4$ (\cite{wang2026spherical}*{Definition 2.2}).

\medskip
Using the $S^1_{p,q}$-action, we have the following characterization of Seifert invariants for any embedded $G_n^{p,q}$-invariant hypersurface in $\mb S^4$. 

\begin{lemma}\label{Lem: S1pq invariant Seifert manifolds}
    Given $n\in\{2,3,\dots\}$ and coprime $p,q\in \mb Z\setminus\{0\}$, let $\Gamma\subset \mb S^4$ be an embedded connected $G_n^{p,q}$-invariant hypersurface. 
    Then, $\Gamma$ is a Seifert fibered $3$-manifold with the following system of invariants (under a suitable orientation):
    \begin{align}\label{Eq: seifert invariants}
        \left(O,o;\mk g~\Big|~\mk  b_0,(|p|,\mk a)^{s(p)(\delta+j_+)},(|p|,|p|-\mk a)^{s(p)j_+}, (|q|,\mk b)^{s(q)(\delta+j_-)},(|q|,|q|-\mk b)^{s(q)j_-} \right),
    \end{align}
    where $O$ and $o$ indicate the orientability of $\Gamma$ and $\Gamma/S^1_{p,q}$ respectively, 
    \begin{itemize}
        \item[(i)] $\mk g:=\genus(\Gamma/S^1_{p,q})$;
        \item[(ii)] $\delta=1$ if $\Gamma/S^1_{p,q}$ separates the north and south poles of $\mb S^4/S^1_{p,q}$; otherwise, $\delta=0$;
        \item[(iii)] $\delta+2j_\pm=\mk m_\pm:=\#((\Gamma/S^1_{p,q})\cap \Xi_0^\pm)$ 
        (cf. \eqref{Eq: half axis z on S3});
        \item[(iv)] $0\leq \mk a<|p|$ and $0\leq \mk b<|q|$ are the unique integers satisfying $\mk a |q|\equiv -{\rm sgn} (pq)$ mod $|p|$ and $\mk b|p|\equiv -{\rm sgn}(pq)$ mod $|q|$, which can be obtained by using B{\'e}zout's Theorem; 
        \item[(v)] $s:\mb Z\to \{0,1\}$ is the characteristic function of $\mb Z\setminus \{\pm 1\}$; every pair of the form $(\alpha,\beta)$ in \eqref{Eq: seifert invariants} is the invariant determined uniquely by one of the $S^1_{p,q}$-fibers in $\Gamma\cap \pi_{p,q}^{-1}(\Xi_0)$, and $(\alpha,\beta)^j$ means that there are $j$ such pairs ($j$ could be $0$, meaning there is no such pair);
        \item[(vi)] $\mk b_0:= -s(p)\cdot j_+- s(q)\cdot j_--\delta\cdot \kappa(p,q) \in \mb Z$ with $\kappa(p,q):=\frac{\mk a|q|+\mk b|p|+{\rm sgn}(pq)}{|pq|}\in \{-1,0,1\}$ so that the Euler number of the Seifert fibration is 
        \begin{align}\label{Eq: Euler number of Seifert manifold}
            \mk e : =-\left(\mk b_0
            + s(p)\sum_{i=1}^{\delta+j_+}\frac{\mk a}{|p|}
            +s(p)\sum_{i=1}^{j_+}\frac{|p|-\mk a}{|p|}
            +s(q)\sum_{i=1}^{\delta+j_-}\frac{\mk b}{|q|}
            +s(q)\sum_{i=1}^{j_-}\frac{|q|-\mk b}{|q|}\right) 
            =\frac{\delta}{pq}. 
        \end{align}
    \end{itemize} 
    In particular, the system of invariants of $\Gamma$ is determined by the topological data $(\mk g, \mk m_+, \mk m_-)$. 
    Moreover, $\Gamma/S^1_{p,q}$ is the orbifold $\Sigma_{\mk g}(|p|^{s(p)\mk m_+}, |q|^{s(q)\mk m_-})$ formed by a closed orientable $\mk g$-genus surface $\Sigma_{\mk g}$ with $s(p)\mk m_+$ cone points of order $|p|$ and $s(q) \mk m_-$ cone points of order $|q|$ so that 
    \begin{align}\label{Eq: orbifold Euler charateristic}
        \chi_{\rm orb}(\Gamma/S^1_{p,q})= (2-2\mk g) - \mk m_+\left(1 - \frac{1}{|p|} \right) - \mk m_-\left(1 - \frac{1}{|q|} \right),
    \end{align}
    where $\chi_{\rm orb}(\Gamma/S^1_{p,q})$ is the orbifold Euler characteristic of $\Gamma/S^1_{p,q}$. 
\end{lemma}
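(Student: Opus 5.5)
The plan is to read off the Seifert structure from the $S^1_{p,q}$-action and then compute each invariant locally near the exceptional fibers, plus one global computation for the Euler number. First, by \cite{wang2026spherical}*{Lemma 2.8} (as in Lemma \ref{Lem: properties of S1-action}(iii)), an embedded $S^1_{p,q}$-invariant hypersurface avoids the fixed poles $\pi_{p,q}^{-1}(0^{(2)},0,\pm1)$. Hence $S^1_{p,q}$ acts on $\Gamma$ with finite isotropy, so $\Gamma$ is Seifert fibered by the orbits.

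The only nontrivial isotropy groups are:
\begin{itemize}
\item $\mathbb Z_{|p|}$ along $\pi_{p,q}^{-1}(\Xi_0^+)$, i.e. the locus $z_2=0$;
\item $\mathbb Z_{|q|}$ along $\pi_{p,q}^{-1}(\Xi_0^-)$, i.e. the locus $z_1=0$.
\end{itemize}
Again by \cite{wang2026spherical}*{Lemma 2.8}, $\Gamma/S^1_{p,q}$ meets $\Xi_0^\pm$ orthogonally in finitely many points, namely $\mathfrak m_\pm$ of them. So there are exactly $s(p)\mathfrak m_+ + s(q)\mathfrak m_-$ exceptional fibers, and the base is the orbifold $\Sigma_{\mathfrak g}(|p|^{s(p)\mathfrak m_+},|q|^{s(q)\mathfrak m_-})$. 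Formula \eqref{Eq: orbifold Euler charateristic} is then the standard definition of $\chi_{\rm orb}$. Orientability of $\Gamma$ and of $\Gamma/S^1_{p,q}$ follows because $\Gamma/S^1_{p,q}$ is a closed surface in the $3$-sphere $\mathbb S^4/S^1_{p,q}$. Hence $\Gamma/S^1_{p,q}=\partial\Omega$ for some region $\Omega$, and $\Gamma$ is two-sided in the orientable $\mathbb S^4$.

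Next I compute the local invariants. At an exceptional orbit through $(z_1,0,x)$, a slice in $\Gamma$ is a disk transverse to the fiber, parametrized by the $z_2$-coordinate, since the $x$-direction is absorbed into the surface direction along $\Xi_0^+$. The generator $e^{2\pi i/p}$ of the isotropy group acts on this slice by $z_2\mapsto e^{2\pi i q/p}z_2$. The Seifert pair $(|p|,\beta)$ is then determined by $\beta q\equiv \pm1$ mod $|p|$, where the sign depends on the orientation of the slice relative to the fiber orientation. Using B\'ezout as in \eqref{Eq: ab corresponding to pq}, this gives $\mathfrak a$ or $|p|-\mathfrak a$ according to the sign in (iv); the $z_1=0$ case symmetrically gives $\mathfrak b$ or $|q|-\mathfrak b$.

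The orientation of the slice is induced from the outward normal of $\Omega$. Walk along the arc $\Xi_0^+$ from the north pole to the south pole: consecutive intersection points with $\partial\Omega$ alternate between entering and exiting $\Omega$, so the pair alternates between $\mathfrak a$ and $|p|-\mathfrak a$. Therefore:
\begin{itemize}
\item if the poles lie on opposite sides of $\Gamma/S^1_{p,q}$ ($\delta=1$), then $\mathfrak m_+$ is odd, equal to $1+2j_+$, and one type occurs once more than the other;
\item if the poles lie on the same side ($\delta=0$), then $\mathfrak m_+=2j_+$ and both types occur equally often.
\end{itemize}
Fixing the orientation so that the surplus type is $\mathfrak a$ gives the multiplicities in \eqref{Eq: seifert invariants}; the same argument on $\Xi_0^-$ gives the $\mathfrak b$ pairs.

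It remains to determine $\mathfrak b_0$, which I will do by computing the rational Euler number $\mathfrak e$ globally. The rational Euler class of the orbifold bundle $\mathring{\mathbb S}^4\to\mathring{\mathbb S}^4/S^1_{p,q}$ lives in $H^2(\,\cdot\,;\mathbb Q)\cong\mathbb Q$. By naturality, $\mathfrak e(\Gamma)=\langle e_{\mathbb Q},[\Gamma/S^1_{p,q}]\rangle$, and $[\Gamma/S^1_{p,q}]=\delta\,[E_0/S^1_{p,q}]$ in $H_2(\mathring{\mathbb S}^4/S^1_{p,q})$. This is the same separation argument as in Claim \ref{Claim: Euler class} and Part I of the proof of Theorem \ref{Thm: new minimal hypersurfaces of odd n}. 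The equator $E_0\cong\mathbb S^3$ with the weighted Hopf fibration is the Seifert fibration with $\mathfrak e=\pm1/(pq)$, which is standard. Hence $\mathfrak e=\delta/(pq)$ after fixing orientations. Finally, solve \eqref{Eq: Euler number of Seifert manifold} for $\mathfrak b_0$: each balanced pair $\mathfrak a/|p|+(|p|-\mathfrak a)/|p|=1$ contributes $1$, and the unbalanced pair contributes $\mathfrak a/|p|+\mathfrak b/|q|$. This yields the stated $\mathfrak b_0$ with $\kappa(p,q)=(\mathfrak a|q|+\mathfrak b|p|+{\rm sgn}(pq))/|pq|$. That $\kappa$ is an integer in $\{-1,0,1\}$ follows from the congruences in (iv) together with the bounds $0\le\mathfrak a<|p|$ and $0\le\mathfrak b<|q|$.

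I expect the main obstacle to be the consistent sign and orientation bookkeeping. The normalization of $\beta$ versus $\alpha-\beta$ at each exceptional fiber, the sign of ${\rm sgn}(pq)$, and the global sign of $\mathfrak e$ for $E_0$ must all be fixed with one coherent orientation convention. I would first verify the whole scheme on the model case $\Gamma=E_0$ ($\delta=1$, $\mathfrak g=0$, $\mathfrak m_\pm=1$), where the answer is the known lens-space-free description of $\mathbb S^3$ with $\mathfrak e=1/(pq)$, and use it to calibrate the conventions.
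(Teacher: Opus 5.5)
Your strategy is sound, but it obtains the global invariant by a different route from the paper.

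\textbf{How the two routes differ.} The paper stays inside Seifert's textbook definitions.
\begin{itemize}
\item In Step 2 it writes $\Gamma=\bd\Omega$ with $\Omega$ containing the south pole and numbers the crossings from the south. It fixes one orientation frame $((-1)^{i+1}\bd_{\theta_1},\bd_{\theta_2},\bd_\eta)$ near every $\pi_{p,q}^{-1}(u_i^\pm)$. It then writes the meridians, the fiber $[H]=p[L]+q[N]$ and the crossing curves $[Q_i^\pm]$ explicitly on the boundary tori. This gives $\beta_i^+q\equiv(-1)^i{\rm sgn}(p)$ mod $|p|$ and $\beta_i^-p\equiv(-1)^i{\rm sgn}(q)$ mod $|q|$, all in that one frame.
\item In Step 3 it builds a section of the drilled-out manifold and reads off $\mk b_0$ from the relation $\alpha_0\sum[Q_i^\kappa]-\beta_0[H]=0$. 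The value $\mk e=\delta/(pq)$ then follows from the definition.
\end{itemize}
You instead get the local pairs from the slice representation plus alternation along each arc. You get $\mk e$ first and globally, from naturality of the rational Euler class and $[\Gamma/S^1_{p,q}]=\pm\delta[E_0/S^1_{p,q}]$ in $H_2(\mathring{\mb S}^4/S^1_{p,q})$, and solve for $\mk b_0$ afterwards. This is the weighted analogue of Claim \ref{Claim: Euler class}. It explains conceptually why $\mk e$ depends only on $\delta$, and it avoids constructing a section. The cost is that you rely on identifying Seifert's $\mk e$ with the pairing of the orbifold Euler class against the base. You also rely on naturality under the inclusion $\Gamma/S^1_{p,q}\hookrightarrow\mathring{\mb S}^4/S^1_{p,q}$, which is legitimate here because the crossings with $\Xi_0^\pm$ are transverse, so the local groups match. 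Your per-arc parity argument for $\mk m_\pm=\delta+2j_\pm$ is more direct than the paper's.

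\textbf{The step you have not argued.} When $\delta=1$, the real content of the lemma is the compatibility of signs, and this is missing. You spend the orientation freedom once, to make the surplus type on $\Xi_0^+$ equal to $\mk a$. You then assert both that the surplus on $\Xi_0^-$ is $\mk b$ and that $\mk e=+\delta/(pq)$. But alternation on $\Xi_0^-$ only gives $\mk b$ or $|q|-\mk b$, and the global computation only gives $\pm\delta/(pq)$. So a mixed outcome, which would contradict the stated invariants, is not ruled out. The paper settles this through its single explicit frame.

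\textbf{A cheap fix within your framework.} This makes the calibration on $E_0$ unnecessary.
\begin{itemize}
\item Fix the orientation by requiring $\mk e=+\delta/(pq)$; this is possible because reversing orientation negates $\mk e$.
\item Let $\beta^\pm$ be the surplus types. Since $\mk b_0$ is an integer, ${\rm sgn}(pq)+s(p)\beta^+|q|+s(q)\beta^-|p|\equiv0$ mod $|pq|$.
\item Reduce mod $|p|$. The choice $\beta^+=|p|-\mk a$ would force $|p|\mid 2$, so $\beta^+=\mk a$ whenever $|p|\ge3$. For $|p|\le2$ there is nothing to decide, since $\mk a=|p|-\mk a$ or there are no such pairs.
\item Reducing mod $|q|$ gives $\beta^-=\mk b$ in the same way.
\item For $\delta=0$ the multiset of pairs is orientation-symmetric and $\mk e=0$, so no sign issue arises.
\end{itemize}
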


\begin{remark}\label{Rem: Seifert invariants}
    In \eqref{Eq: seifert invariants}, each pair of the form $(\alpha,\beta)$ corresponds to one of the exceptional orbits of $\Gamma$. 
    Hence, we use $s(\cdot)$ in the superscript to omit the corresponding pairs when $|p|=1$ or $|q|=1$. 
    We also make the following remarks. 
    \begin{itemize}
        \item[(i)] If $|pq|=1$, then the unoriented $G_n^{p,q}$-hypersurface $\Gamma$ is a principal $S^1$-bundle, topologically determined by $\mk g$ and $|\mk e|\equiv \mk m_\pm$ mod $2$. 
        In particular, the system of invariants in \eqref{Eq: seifert invariants} is now given by $(O,o;\mk g|\pm\delta)$. 
        \item[(ii)] By \cite{seifert1980textbook}*{Part II, \S7, Theorem 5}, every orientable Seifert fibered manifold is in one-to-one correspondence with a system of invariants similar to \eqref{Eq: seifert invariants}. 
            By \cite{seifert1980textbook}*{Part II, \S7, Theorem 6}, after reversing the orientation of $\Gamma$, one simply replaces every pair $(\alpha,\beta)$ in \eqref{Eq: seifert invariants} by $(\alpha, \alpha-\beta)$, and replaces $\mk b_0$ by $-( s(p)\mk m_++s(q)\mk m_-)-\mk b_0$. 
        \item[(iii)] By \cite{scott1983three-manifolds}*{Theorem 3.8}, the Seifert fibration of orientable Seifert manifolds is unique up to isomorphism, with the exception of a few special cases. 
        For instance, the equator $E_0$ is $G_n^{p,q}$-invariant for all $n,p,q$, and admits different structures of Seifert fibration. 
        Nevertheless, by \cite{scott1983three-manifolds}*{Theorem 5.3, 3.8}, if 
        \begin{align}\label{Eq: unique seifert fibration}
            \chi_{\rm orb}(\Gamma/S^1_{p,q}) <0 ,\qquad{\rm or} \qquad \chi_{\rm orb}(\Gamma/S^1_{p,q}) =0 \quad{with}\quad \mk e\neq 0,
        \end{align}
        then \eqref{Eq: seifert invariants} gives the unique structure (up to orientation) of the Seifert fibration of $\Gamma$, and the topological type of $\Gamma$ is uniquely determined. 
    \end{itemize}
\end{remark}
    
\begin{proof}
    {\bf Step 1}. 
    We begin with some notations. Firstly, consider the coordinates $(\theta_1,\theta_2,\eta,\varrho)\in [0,2\pi]^2\times [0,\pi]^2$ of $\mb S^4$ defined by 
    \[ z_1=\sin(\varrho)\cos(\eta/2)e^{i\theta_1},\qquad z_2=\sin(\varrho)\sin(\eta/2)e^{i\theta_2},\qquad x=\cos(\varrho), \]
    and let the ordered set $(\bd_{\theta_1}, \bd_{\theta_2},\bd_\eta,\bd_\varrho)$ be the orientation of $\mb S^4$, which indeed coincides with the usual orientation of $\mb S^4$ induced from $ \mb B^5\subset \mb R^5$. 
    Combining the embeddedness of $\Gamma$ with \cite{wang2026spherical}*{Lemma 2.8}, we know $\Gamma/S^1_{p,q}$ intersects $\Xi_0^\pm$ orthogonally at some points $\{u_i^{\pm}\}_{i=1}^{\mk m_\pm}\subset \Xi_0^\pm \cap (\mathring{\mb S}^4/S^1_{p,q})$, where $u_1^\pm,u_2^\pm,\dots u_{\mk m_\pm}^\pm$ are arranged along $\Xi_0^\pm$ in order from south to north, i.e. $\varrho(u_i^\pm)$ is decreasing in $i$. 
    Since $\mb S^4/S^1_{p,q}$ is a simply connected $3$-sphere, $\Gamma/S^1_{p,q}$ is an orientable $C^0$-surface that separates $\mb S^4/S^1_{p,q}$ into two components. 
    Hence, we can take an open $S^1_{p,q}$-set $\Omega$ containing the south pole so that $\Gamma=\bd \Omega$. 
    Note that $(-1)^{i}\bd_\varrho$ points outward of $\Omega$ at $u_i^{\pm}$. 
    Therefore, since $((-1)^i\bd_\varrho, (-1)^{i+1}\bd_{\theta_1},\bd_{\theta_2},\bd_\eta)$ is the positive orientation of $\mb S^4$, we equip $\Gamma=\bd\Omega$ with the induced boundary orientation represented by the ordered set $((-1)^{i+1}\cdot \bd_{\theta_1},\bd_{\theta_2},\bd_\eta)$ near every $\pi_{p,q}^{-1}(u_i^\pm)$. 
    
    Next, note that for any $u\in \Xi_0\cap(\mathring{\mb S}^4/S^1_{p,q})$, unlike \eqref{Eq: number of points in orbits on Xi_0}, $\# (D_n\cdot u)$ is now either $1$ (if $u\in (E_0/S^1_{p,q})\cap \Xi_0$) or $2$ (if $u\in \Xi_0\setminus (E_0/S^1_{p,q})$) since we do not have the antipodal symmetry. 
    Nevertheless, since $\Gamma/S^1_{p,q}$ separates $\mb S^4/S^1_{p,q}$, the circle $\Xi_0$ must be separated by $\Gamma/S^1_{p,q}$ into an even number of arcs, which implies that $\mk m_+$ and $\mk m_-$ have the same parity. 
	Hence, similar to \eqref{Eq: intersection with C0 for n even} and \eqref{Eq: R-H formula of intersection 4j}, we conclude that 
	$\mk m_\pm = 2j_{\pm}+\delta$ 
	for some $j_\pm\in \{0,1,\dots\}$, where $\delta\in \{0,1\}$ is given as in (ii). 

    \medskip
    {\bf Step 2}. {\it The invariants $(\alpha_i^\pm,\beta_i^\pm)$ near exceptional orbits $\pi_{p,q}^{-1}(u_i^\pm)$}. 

    In the system of Seifert invariants \eqref{Eq: seifert invariants}, every pair $(\alpha,\beta)$ is uniquely determined by an exceptional orbit. 
    In particular, if $|p|=1$ (resp. $|q|=1$), then $\{\pi^{-1}_{p,q}(u_i^+)\}_{i=1}^{\mk m_+}$ (resp. $\{\pi^{-1}_{p,q}(u_i^-)\}_{i=1}^{\mk m_-}$) are not exceptional orbits, and thus the pairs of the form $(|p|,\cdot)$ (resp. $(|q|,\cdot)$) should be removed from \eqref{Eq: seifert invariants}. This is equivalent to $s(p)=0$ (resp. $s(q)=0$) in our notations (v). 

    If $|p|>1$, then we consider $u_i^+=(0^{(2)},\sin(\varrho_i^+)/2, \cos(\varrho_i^+))\in (\Gamma/S^1_{p,q})\cap \Xi_0^+$ and follow the constructions in \cite{seifert1980textbook}*{p. 385} to compute the invariants near the exceptional orbit $\pi_{p,q}^{-1}(u_i^+)=\{(\sin(\varrho_i^+)e^{i\theta_1}, 0^{(2)}, \cos(\varrho_i^+) ): \theta_1\in \mb R/2\pi\mb Z\}\subset \Gamma$. 
    Recall that $\Gamma$ intersects $\pi_{p,q}^{-1}(\Xi_0^+)$ orthogonally at the $S^1_{p,q}$-orbit $\pi_{p,q}^{-1}(u_i^+)$.
    Hence, up to an $S^1_{p,q}$-equivariant diffeomorphism, we can parameterize a small tubular neighborhood $B_i^+\subset \Gamma$ of $\pi_{p,q}^{-1}(u_i^+)$ as 
    \[ B_i^+\cong \{(z_1,z_2,\cos(\varrho_i^+))\in \mb S^4: |z_2|< \epsilon\sin(\varrho_i^+)\}\qquad {\rm for ~ some~constant~} \epsilon>0\]
    with coordinates $(\theta_1,\theta_2,\eta)\in (\mb R/2\pi\mb Z)^2\times [0, 2\arcsin(\epsilon))$, and identify the torus $T_i^+:=\bd B_i^+$ with 
    \[T_i^+\cong \left\{\left(\sqrt{1-\epsilon^2} \sin(\varrho_i^+) e^{i\theta_1},\epsilon \sin(\varrho_i^+)e^{i\theta_2}, \cos(\varrho_i^+)\right)\in \mb S^4: \theta_1,\theta_2\in \mb R/2\pi\mb Z\right\}\] with coordinates $(\theta_1,\theta_2)\in (\mb R/2\pi\mb Z)^2$. 
    Equip $T_i^+=\bd B_i^+$ with the boundary orientation induced from $\Gamma$, which is represented by the ordered set $((-1)^{i+1} \bd_{\theta_1}, \bd_{\theta_2})$ since $\bd_\eta$ points outward of $B_i^+$. 
    
    Using the coordinates $(\theta_1,\theta_2)$, we define
    \[L(t):=(t,0) \qquad {\rm and}\qquad N(t):=(0,t) \qquad \forall t\in \mb R/2\pi\mb Z\] 
    as two oriented (induced from $t$) circles on $T_i^+$ with the corresponding homology classes $[L],[N]\in H_1(T_i^+;\mb Z)$. 
    Using the orientation of $T_i^+$, $L$ and $ N$ have the intersection number $s_i^+:=(-1)^{i+1}$, i.e. the ordered set $(s_i^+L, N)$ agrees with the orientation of $T_i^+$. 

    Next, let $M_i^+:=N\subset T_i^+$ be the oriented meridian. 
    Given $\theta_1^0,\theta_2^0\in [0,2\pi]$, it follows from \eqref{Eq: suspended weighted Hopf action} that the $S^1_{p,q}$-orbit (i.e. the fiber) through $(\theta_1^0,\theta_2^0)\in T_i^+$ is the circle $H_{(\theta_1^0,\theta_2^0)}(t):=(\theta_1^0+tp,\theta_2^0+tq)\subset T_i^+$, $t\in \mb R/(2\pi \mb Z)$ with its orientation induced from $t$. 
    Hence, we have \[[H_{(\theta_1^0,\theta_2^0)}]=p[L]+q[N]\in H_1(T_i^+;\mb Z).\] 
    By \cite{seifert1980textbook}*{p. 385, (2)(3)}, there is an oriented crossing circle $Q_i^+\subset T_i^+$ and integers $\alpha_i^+\geq 1$, $0\leq \beta_i^+<\alpha_i^+$, $\tilde s_i^+\in \{\pm 1\}$, so that for any $(\theta_1^0,\theta_2^0)\in (\mb R/2\pi\mb Z)^2$, 
    \begin{itemize}
        \item[(1)] $Q_i^+$ intersects the fiber $H_{(\theta_1^0,\theta_2^0)}$ at a single point with the intersection number $+1$;
        \item[(2)] $[M_i^+]=\tilde s_i^+ \left (\alpha_i^+ [Q_i^+]+\beta_i^+[H_{(\theta_1^0,\theta_2^0)}] \right )$, 
    \end{itemize}
    which determines the pair of invariants $(\alpha_i^+,\beta_i^+)$ at $u_i^+$. In particular, we see that $\tilde s_i^+\alpha_i^+$ is the intersection number of the oriented circles $M_i^+$ and $H$, and thus $\tilde s_i^+ \alpha_i^+=-s_i^+p$, which implies 
    \[\alpha_i^+=|p|,\qquad {\rm and}\qquad \tilde s_i^+=- s_i^+\cdot {\rm sgn}(p).\] 
    It then follows from (2) that $[N]=-s_i^+p[Q_i^+]- s_i^+\cdot {\rm sgn}(p)\beta_i^+(p[L]+q[N])$, and thus  
    \begin{align}\label{Eq: crossing curve Qi+}
        [Q_i^+]= -{\rm sgn}(p)\beta_i^+ [L]- \frac{s_i^+\cdot {\rm sgn}(p)+\beta_i^+ q}{|p|} [N] .
    \end{align}
    In particular, $(\alpha_i^+,\beta_i^+)$ with $0\leq \beta_i^+<\alpha_i^+=|p|$ satisfies that $\beta_i^+ q\equiv -s_i^+\cdot {\rm sgn}(p)=(-1)^i{\rm sgn}(p)$ mod $|p|$. 
    Therefore, $\beta_i^+=\mk a$ for $i$ odd, and $\beta_i^+=|p|-\mk a$ for $i$ even, where $\mk a,\mk b$ are given as in (iv). 
    One verifies that the definition of $(\alpha_i^+,\beta_i^+)$ is independent of the orientation of $M_i^+$. 
    Together with $s(p)=1$, we have $(|p|,\mk a)^{j_++\delta}$ and $(|p|,|p|-\mk a)^{j_+}$ in \eqref{Eq: seifert invariants}.

    Similarly, if $|q|>1$, then near the exceptional orbit $\pi_{p,q}^{-1}(u_i^-)=\{(0^{(2)},\sin(\varrho_i^-)e^{i\theta_2}, \cos(\varrho_i^-)):\theta_2\in \mb R/2\pi\mb Z\} \subset \Gamma$, the vector field $-\bd_\eta$ points outward of a tubular neighborhood $B_i^-\subset\Gamma$ of $\pi_{p,q}^{-1}(u_i^-)$, and the ordered set $((-1)^i\bd_{\theta_1},\bd_{\theta_2})$ gives the induced orientation of $T_i^-=\bd B_i^-$. 
    In this case, we shall take $s_i^-:=(-1)^i$ so that $s_i^-\cdot L$ and $N$ have the intersection number $+1$. 
    Then we have the oriented meridian $M_i^-:=L$, the oriented fiber $H$ with $[H]=p[L]+q[N]$, and the oriented crossing circle $Q_i^-$ so that (1)(2) remain valid for some $\tilde s_i^-\in \{\pm 1\}$, $\alpha_i^-\geq 1$, and $0\leq \beta_i^-<\alpha_i^-$. 
    By a similar computation, we see 
    \[\alpha_i^-=|q|, \qquad \tilde s_i^-=s_i^-\cdot {\rm sgn}(q),\qquad{\rm and}\qquad \beta_i^- p\equiv s_i^-\cdot {\rm sgn}(q)=(-1)^i{\rm sgn}(q)~{\rm mod~}|q|,\]
    so that 
    \begin{align}\label{Eq: crossing curve Qi-}
        [Q_i^-]= \frac{s_i^-\cdot {\rm sgn}(q)-\beta_i^-p}{|q|} [L] -{\rm sgn}(q)\beta_i^-[N], 
    \end{align}
    which implies that $\beta_i^-=\mk b$ if $i$ is odd, and $\beta_i^-=|q|-\mk b$ if $i$ is even. 
    Together with $s(q)=1$, we have $(|q|,\mk b)^{j_-+\delta}$ and $(|q|,|q|-\mk b)^{j_-}$ in \eqref{Eq: seifert invariants}. 

    \medskip
    {\bf Step 3}. {\it The invariant $\mk b_0$.}
    
    In this step, we only consider the case $|p|,|q|>1$, while the cases $|p|=1$ or $|q|=1$ are very similar. 
    We now follow \cite{seifert1980textbook}*{p.386} to compute $\mk b_0$. 
    
    To be exact, let $\Gamma^1:=\Gamma\setminus(\cup_{\kappa\in\{\pm\}}\cup_{i=1}^{\mk m_\kappa} B_i^\kappa)$ be the equivariant hypersurface with all tubes $B_i^\pm$ drilled out. 
    Then, after filling the holes by gluing ordinary tubes $V_i^\pm\cong \mb D^2\times \mb S^1$ with $\bd\mb D^2\times\{1\}\cong Q_i^\pm\subset T_i^\pm =\bd V_i^\pm$ as meridians, one obtains $\overline \Gamma^1$ as an $S^1$-bundle without exceptional orbits. 
    In particular, every $Q_i^\pm$ bounds a disk $D_i^\pm\subset V_i^\pm\subset \overline{\Gamma}^1$ that intersects every $S^1$-orbit in $V_i^\pm$ at a single point, i.e. $V_i^\pm\cong D_i^\pm\times S^1$. 
    
    Fix any small neighborhood $B_0$ of an $S^1_{p,q}$-orbit $S^1_{p,q}\cdot u_0\subset \interior(\Gamma^1)\subset \overline{\Gamma}^1$, and let $T_0:=\bd B_0$. 
    Consider the $S^1_{p,q}$-hypersurfaces $\Gamma^0:= \Gamma^1\setminus B_0$ and $\overline{ \Gamma}^0:=\overline{\Gamma}^1\setminus B_0$ with $B_0$ drilled out. 
    We claim that there is a section $f_0\subset \Gamma^0$ (i.e. a surface intersecting every orbit at a single point with $ \Gamma^0\cong f_0\times S^1$) so that $\bar f_0:=f_0\cup (\cup_{1\leq i\leq \mk m_{\kappa},\kappa\in\{\pm\}} D_i^\kappa)$ is a section of $\overline{ \Gamma}^0$ (i.e. $\overline{ \Gamma}^0\cong \bar f_0\times S^1$). 

    Indeed, since $\pi_{p,q}(\Gamma^0)\subset \mathring{\mb S}^4_{p,q}/S^1_{p,q}$ is a surface with non-empty boundary, $\pi_{p,q}(\Gamma^0)$ retracts onto a $1$-dimensional CW-complex, and thus  $H^2(\pi_{p,q}(\Gamma^0);\mb Z)=0$, which implies the $S^1$-bundle $\Gamma^0\to \pi_{p,q}(\Gamma^0)$ is trivial, i.e. $ \Gamma^0\cong \pi_{p,q}(\Gamma^0)\times S^1$.
    Using this global trivialization, every crossing circle $Q_i^\pm\subset \bd  \Gamma^0$ is identified with $\{(u,\phi_i^\pm(u))\in \pi_{p,q}(\Gamma^0)\times S^1: u\in \pi_{p,q}(T_i^\pm)\} $ for some continuous map $\phi_i^\pm: \pi_{p,q}(T_i^\pm)\to S^1$. 
    To construct the desired $f_0$, it is sufficient to find a continuous map 
    \[\Phi: \pi_{p,q}(\Gamma^0)\to S^1 \]
    so that $\Phi\llcorner \pi_{p,q}(T_i^\pm) = \phi_i^\pm$.  
    Let $d_i^\pm:=\deg(\phi_i^\pm)$ be the degree of $\phi_i^\pm$. 
    Then we take any continuous map $\phi_0: \pi_{p,q}(T_0)\to S^1$ with degree $d_0:=\deg(\phi_0)$ so that $d_0+\sum_{1\leq i\leq \mk m_{\kappa}, \kappa\in\{\pm\}} d_i^\kappa = 0$. 
    By \cite{mcduff2017topology}*{Lemma 2.7.3}, the boundary maps $\phi_0,\phi_i^\pm$ admit a continuous extension $\Phi$. 
    Therefore, the surface $\{(u,\Phi(u))\in \pi_{p,q}(\Gamma^0)\times S^1: u\in \pi_{p,q}( \Gamma^0)\}$ corresponds to the desired section $f_0\subset \Gamma^0$. 

    Let $O_f$ be an orientation of $f_0$ so that $(O_f,H)$ forms the orientation of $\Gamma$, where $[H]=p[L]+q[N]$ is the oriented fiber.
    Define the circle $Q_0:=\bd f_0 \llcorner T_0$ with boundary orientation induced from $f_0$. 
    By (1) in {\bf Step 2}, every $(\nu_i^\pm,Q_i^\pm,H)$ also gives the orientation of $\Gamma$, where the vector $\nu_i^\pm$ at $\spt(Q_i^\pm)$ points outward (resp. inward) of the solid torus $V_i^\pm$ (resp. $f_0$). 
    Thus, $O_f=(\nu_i^\pm ,Q_i^\pm)$, and the circle $\bd f_0\llcorner T_i^\pm$ with boundary orientation  is indeed $-Q_i^\pm$, which implies $[Q_0]-\sum_{1\leq i\leq \mk m_{\kappa}, \kappa\in\{\pm\}}[Q_i^\kappa]=[\bd f_0]=0\in H_1( \Gamma^0;\mb Z)$. 
    
    In addition, if the vector $\nu_0$ at $\spt(Q_0)$ points outward (resp. inward) of the tube $B_0$ (resp. $f_0$), then $(-\nu_0,Q_0)=O_f$, and $(\nu_0,Q_0,-H)$ gives the orientation of $\Gamma$, which implies that $(Q_0,-H)$ is the boundary orientation of $T_0=\bd B_0$. 
    
    Let $M_0\subset T_0$ be a meridian with any orientation. 
    Then, since $B_0$ is an ordinary tube, $M_0$ and $Q_0$ are both sections of $T_0$ (i.e. intersect every orbit of $T_0$ at a single point). 
    Hence, there are $\alpha_0\in \{\pm1\}$ and $\beta_0\in\mb Z$ so that 
    \[[M_0]=\alpha_0[Q_0]+\beta_0[-H]=1\cdot (\alpha_0[Q_0]) + \alpha_0\beta_0(\alpha_0[-H]) \in H_1(T_0;\mb Z).\]
    By the definition in \cite{seifert1980textbook}*{p. 386}, $\mk b_0:=\alpha_0\beta_0$ is independent of the orientation of $M_0$.
    Then, after embedding into $\Gamma^1$, we conclude that $0=[M_0]=\alpha_0\cdot \sum_{1\leq i\leq \mk m_{\kappa}, \kappa\in\{\pm\}} [Q_i^\kappa]- \beta_0 [H]\in H_1(\Gamma^1;\mb Z)$. 
    Combining \eqref{Eq: crossing curve Qi+}\eqref{Eq: crossing curve Qi-} with $[H]=p[L]+q[N]$, we obtain $\mk b_0=\alpha_0\beta_0=-j_+-j_--\delta \kappa(p,q)$, where $\kappa(p,q)=\frac{\mk a|q|+\mk b|p|+{\rm sgn}(pq)}{|pq|}\in \{-1,0,1\}$. 
    Note that for general $|p|,|q|\geq 1$, one can use $s(p)[Q_i^+]$ and $s(q)[Q_i^-]$ in place of $[Q_i^+]$ and $[Q_i^-]$ respectively to show that $\mk b_0=-s(p)j_+-s(q)j_--\delta\kappa(p,q)$. 

    Finally, \eqref{Eq: Euler number of Seifert manifold} and \eqref{Eq: orbifold Euler charateristic} follow directly from their definitions (cf. \cite{scott1983three-manifolds}*{p. 437, p. 427}). 
    %
    %
\end{proof}

\subsection{\texorpdfstring{Min-max constructions for minimal Seifert hypersurfaces in $\mb S^4$}{Min-max constructions for minimal Seifert hypersurfaces in S4}}

Using the constructions of equivariant sweepouts in Section \ref{Sec: new minimal hypersurfaces in S4}, we have the following existence results for minimal Seifert fibered manifolds in $\mb S^4$. 
Note that the Seifert fibration structure of a $G_n^{p,q}$-hypersurface $\Gamma$ is determined by $\genus(\Gamma/S^1_{p,q})$ and $\#((\Gamma/S^1_{p,q})\cap \Xi_0^\pm)$ (cf. Lemma \ref{Lem: S1pq invariant Seifert manifolds}). 
Hence, we mainly consider these topological data for $\Gamma$ in the following constructions. 

\begin{theorem}\label{Thm: minimal Seifert manifold in S4 - same side}
	Given $n\in \{2,3,\dots\}$ and coprime $p,q\in \mb Z\setminus\{0\}$, there exists an embedded minimal $G_n^{p,q}$-hypersurface $\widehat\Gamma_n^{p,q}\subset \mb S^4$ that is topologically a Seifert fibered $3$-manifold satisfying
    \[(\mk g, \mk m_+, \mk m_-)\in \{(0,2,0),(0,0,2), (n-1,2,2),(n-1,4,0),(n-1,0,4)\}\]
    where $\mk g:=\genus(\widehat\Gamma_n^{p,q}/S^1_{p,q})$ and $\mk m_\pm:= \#((\widehat\Gamma_n^{p,q}/S^1_{p,q})\cap \Xi_0^\pm)$ (cf. \eqref{Eq: half axis z on S3}). 
\end{theorem}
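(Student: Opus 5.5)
We adapt the even-$n$ construction of Lemma \ref{Lem: Gn-sweepout n-even} and Proposition \ref{Prop: new minimal hypersurface of even n}, replacing $\mb Z_2\times D_n$ by the $D_n$-action \eqref{Eq: Dn reflectional action on S4/S1pq} on $(\mb S^4/S^1_{p,q},\wti g_{\mb S^4/S^1_{p,q}})$. The antipodal map is no longer available.

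\textbf{Step 1: the sweepout.} Take two parallel slices $S_{\pm\rho}$ and join them by $n$ necks along the $D_n$-invariant family of longitudes $\cup_l\Xi_l^{+}$, or along a $D_n$-orbit of half-axes $\widehat\Xi_l$ in the pattern of the left panel of Figure \ref{Fig: sweepout n-even}. This gives a connected $D_n$-invariant surface $\widehat\Gamma_{\rho,\epsilon}$ of genus $n-1$ that leaves both poles on the same side and meets $\Xi_0^+$ and $\Xi_0^-$ each in two points. Near $\rho=0$ we open the necks by an equivariant retraction of the punctured equator onto great circles $\Upsilon_l$, exactly as in Lemma \ref{Lem: Gn-sweepout n-even}. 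The catenoid estimates of \cite{ketover2020catenoid} then give
\[
\sup_t\mc H^3(\wti\Sigma_t)<2\mc H^3(E_0).
\]
This uses positive Ricci curvature of the weighted quotient away from the singular arcs $\Xi_0^\pm$. I would verify it as in Lemma \ref{Lem: properties of S1-action}(ii), or use $\Ric_{\mb S^4}>0$ directly on the lifted hypersurfaces. The lifted family is an Almgren–Pitts $1$-sweepout bounding $G_n^{p,q}$-sets.

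\textbf{Step 2: min-max.} Apply \cite{wang2026spherical}*{Theorem 1.5}. There are no special exceptional orbits, and the boundary-type condition holds. This yields a connected embedded minimal $G_n^{p,q}$-hypersurface $\widehat\Gamma_n^{p,q}$. Its multiplicity is $1$, since its area is at least $\mc H^3(E_0)$ and the width is below $2\mc H^3(E_0)$. Arguing as in \cite{zhou2015positiveRic}, it bounds a $D_n$-invariant open set. The genus bound $\mk g\le n-1$ follows from \cite{ketover2016equivariant}*{Theorem 1.3}, applied in the weighted orbifold. Away from the cone arcs this is a smooth weighted $3$-manifold, and orthogonality at $\Xi_0^\pm$ comes from \cite{wang2026spherical}*{Lemma 2.8}.

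\textbf{Step 3: topology.} Lemma \ref{Lem: S1pq invariant Seifert manifolds} gives $\mk m_\pm=\delta+2j_\pm$.

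First we show $\delta=0$. The element $s$ is orientation-preserving on the ambient space, swaps the poles, and preserves the two complementary regions, as the sweepout surfaces do. If the poles were separated, $s$ would exchange the regions, a contradiction. So $\mk m_\pm=2j_\pm$.

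Next apply Riemann–Hurwitz to the free-off-$\Xi_0$ action of $\mb Z_n=\langle r\rangle$. The quotient has genus $0$ by the equivariant genus control (\cite{ketover2016equivariant}*{(1.3)}), as for the sweepout surfaces. This gives
\[
2-2\mk g=2n-(\mk m_++\mk m_-)(n-1),
\]
that is,
\[
\mk g=\Big(\tfrac{\mk m_++\mk m_-}{2}-1\Big)(n-1).
\]
Combined with $\mk g\le n-1$ and $\mk g\ge 0$, this forces either $\mk m_++\mk m_-=2$ with $\mk g=0$, or $\mk m_++\mk m_-=4$ with $\mk g=n-1$. The value $\mk m_++\mk m_-=0$ is excluded because it would give negative genus. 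Since each $\mk m_\pm$ is even, the only possibilities are exactly the five listed triples.

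\textbf{Main obstacle.} The hard step is Step 2: the genus bound and the persistence of the quotient genus $0$ in the presence of the orbifold arcs $\Xi_0^\pm$ and the inhomogeneous weight $V_{p,q}$. My first attempt would be to handle a neighbourhood of $\Xi_0^\pm$ by lifting locally to a $\mb Z_{|p|}$- or $\mb Z_{|q|}$-cover, where the metric is smooth, and to run the equivariant genus bound there.
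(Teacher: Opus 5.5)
Steps 2 and 3 of your outline match the paper's proof. Those are: equivariant min-max with multiplicity one from the $2\mc H^3(E_0)$ bound, a $D_n$-invariant region bounded by the limit (so that $s$ forces $\delta=0$), and Riemann--Hurwitz for $\langle r\rangle$ with quotient genus $0$. This gives $\mk g=(j_++j_--1)(n-1)\le n-1$ and the five triples. In Step 3 you should add one justification: $r$ fixes both poles and preserves the invariant region, so it acts orientation-preservingly on $\widehat\Gamma_n^{p,q}/S^1_{p,q}$ with fixed points only on $\Xi_0$.

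\textbf{The gap is the neck placement in Step 1.} With $l\in\{1,\dots,n\}$ as in \eqref{Eq: half axis xi on S3}, the family $\cup_{l}\Xi_l^+$ is not $D_n$-invariant, since $r(\Xi_n^+)=\Xi_2^-$ and $s(\Xi_1^+)=\Xi_{n-1}^-$. Label the $2n$ half-longitudes $\Xi_l^\pm$ by their $w$-angles $m\pi/n$, $m\in\mb Z/2n\mb Z$. Their $D_n$-orbits are the even residues and the odd residues, and $\{1,\dots,n\}$ meets both.

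Your fallback fails too. A reflection $sr^j$ preserves a half-longitude only if its angle lies in $\frac{\pi}{n}\mb Z$, so no element of $D_n$ preserves a $\widehat\Xi$-half-longitude, and its orbit has $2n$ members. That gives $2n$ necks and genus $2n-1$. Riemann--Hurwitz with $\mk m_\pm=2$ then gives a $\mb Z_n$-quotient of genus $1$, not $0$. Genus $0$ is exactly the input Step 3 needs, so the final list of triples would change.

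\textbf{Fix.} Put the necks along a single $D_n$-orbit of $\Xi$-half-longitudes, for example the $n$ half-longitudes with $w$-angle $2\pi j/n$, $0\le j\le n-1$. This is an $\langle r\rangle$-orbit, and it is automatically $s$-invariant because $s$ negates angles.
\begin{itemize}
\item For $n=2k$ this is $\cup_{b=1}^k\Xi_{2b}$, the full circles of the left panel of Figure \ref{Fig: sweepout n-even}.
\item For $n$ odd it is $\cup_{l=1}^n\Xi_l^{{\rm sgn}((-1)^l)}$. Full circles $\Xi_l$ cannot be used here, because their two halves lie in different orbits. This parity issue is why the paper splits the proof into Case I and Case II.
\end{itemize}
With this choice the surfaces have genus $n-1$, $\mk m_+=\mk m_-=2$, both poles on the same side, and $\mb Z_n$-quotient of genus $0$.

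When opening the necks, retract onto the halves of the circles $\Upsilon_l$ whose $w$-angle lies in $\frac{\pi}{n}(2\mb Z+1)$. For odd $n$, the other half of each $\Upsilon_l$ passes through a neck point, so retracting onto all of $\cup_l\Upsilon_l$ does not work as stated. After these changes your argument coincides with the paper's.

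The local-cover device in your last paragraph is not needed. The paper does not treat the arcs $\Xi_0^\pm$ separately; it imports the genus bounds from the equivariant min-max theory for the $G_n^{p,q}$-action exactly as in Proposition \ref{Prop: new minimal hypersurface of even n}.
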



\begin{proof}
    {\bf Case I}. $n=2k\geq 2$ is even. 
    
	Since $\mb S^4/S^1$ and $\mb S^4/S^1_{p,q}$ have the same parameterizations \eqref{Eq: parameterize S4/S1}\eqref{Eq: parameterize S4/S1pq}, we have a $D_n$-equivariant sweepout $\{\Sigma_t\}_{t\in [0,1]}$ of $\mb S^4/S^1_{p,q}$ as constructed in the proof of Lemma \ref{Lem: Gn-sweepout n-even}. 
	Then, $\{\wti\Sigma_t^{p,q}:=\pi_{p,q}^{-1}(\Sigma_t)\}_{t\in [0,1]}$ is a $G_n^{p,q}$-equivariant sweepout of $\mb S^4$. 
	Combining the equivariant min-max (\cite{wang2026spherical}*{Theorem 1.5}) with the proof of Proposition \ref{Prop: new minimal hypersurface of even n} and Remark \ref{Rem: valid in symmetric S4 with Ric>0 for n-even}, we obtain a connected embedded minimal $G_n^{p,q}$-hypersurface $\widehat \Gamma_n^{p,q}$ with multiplicity one so that $\mk g= \genus(\widehat\Gamma_n^{p,q}/S^1_{p,q}) \leq n-1$. 
    By Lemma \ref{Lem: S1pq invariant Seifert manifolds}(ii)(iii) and the proof of \eqref{Eq: intersection with C0 for n even}\eqref{Eq: R-H formula of intersection 4j}, we conclude that 
	$\mk m_\pm = 2j_\pm$ 
	for some $j_\pm\in \{0,1,\dots\}$, and 
    $\mk g=(j_++j_--1)(n-1)\leq n-1$. 
    This proves the desired result. 

    {\bf Case II}. $n=2k+1\geq 3$ is odd. 
    
    For any $\rho\in (0,1)$ and $\epsilon>0$ small enough, we construct a surface $\widehat\Gamma_{\rho,\epsilon}\subset \mb S^4/S^1_{p,q}$ as 
    \[
    \widehat\Gamma_{\rho,\epsilon}:= [(S_\rho \cup S_{-\rho})\setminus \cup_{l=1}^n B_\epsilon(\Xi_l^{{\rm sgn}((-1)^l)})] \cup \{(w,t,x)\in \cup_{l=1}^n\bd B_\epsilon(\Xi_l^{{\rm sgn}((-1)^l)}): -\rho<x<\rho\}, 
    \]
    which is formed by two spheres $S_\rho\cup S_{-\rho}$ connected via $n$ necks centered at the longitudes $\cup_{l=1}^n \Xi_l^{{\rm sgn}((-1)^l)}$ (Figure \ref{Fig: sweepout n-even}). 
    Since $n$ is odd, one easily sees that $\cup_{l=1}^n \Xi_l^{{\rm sgn}((-1)^l)}$ is $D_n$-invariant, which implies that $\widehat\Gamma_{\rho,\epsilon}$ is also $D_n$-invariant. 
    Additionally, similar to Lemma \ref{Lem: Gn sweepout}, one can use $\widehat\Gamma_{\rho,\epsilon}$ in place of $\Gamma_{\rho,\epsilon}$ and apply an opening-up deformation to construct a $D_n$-equivariant sweepout $\{\Sigma_t\}_{t\in [0,1]}$ so that $\genus(\Sigma_t) = n-1$ for $t\in (0,1)$, 
    $\Sigma_0\subset\cup_{l=1}^n\Upsilon_l$, 
    and $\Sigma_1= \cup_{l=1}^n\Xi_l^{{\rm sgn}((-1)^l)}$. 
    By applying the equivariant min-max (\cite{wang2026spherical}*{Theorem 1.5}) to the $G_n^{p,q}$-sweepout $\{\wti\Sigma_t:=\pi_{p,q}^{-1}(\Sigma_t)\}_{t\in [0,1]}$, we obtain the desired minimal hypersurface as in {\bf Case I}. 
\end{proof}

\begin{theorem}\label{Thm: minimal Seifert manifold in S4 - different sides}
	Given $n\in \{2,3,\dots\}$ and coprime $p,q\in \mb Z\setminus\{0\}$, there exists an embedded minimal $G_n^{p,q}$-hypersurface $\widetilde\Gamma_n^{p,q}\subset \mb S^4$ that is topologically a Seifert fibered $3$-manifold satisfying: 
    \[({\mk  g}, \mk m_+, \mk m_-)\in \{(0,1,1),(n-1,3,1), (n-1,1,3),(2n-2,3,3),(2n-2,5,1),(2n-2,1,5)\},\]
    where $ {\mk g}:=\genus(\wti\Gamma_n^{p,q}/S^1_{p,q})$ and $\mk m_\pm:= \#((\wti\Gamma_n^{p,q}/S^1_{p,q})\cap \Xi_0^\pm)$ (cf. \eqref{Eq: half axis z on S3}). 
    Moreover, there exists $\wti N_{p,q}\geq 1$ (depending on $p,q$) large enough so that $\mk  g\neq 0$ for all $n\geq \wti N_{p,q}$, which implies that 
    \begin{itemize}
        \item[(i)] $\wti{\mc M}_{p,q} := \{\wti\Gamma_n^{p,q}\subset \mb S^4: n\geq \wti N_{p,q}\}$ contains infinitely many embedded minimal hypersurfaces in $\mb S^4$ of distinct topological types;
        \item[(ii)] $\wti{\mc M}_{p_1,q_1}\cap \wti{\mc M}_{p_2,q_2}=\emptyset$ for any two distinct pairs of coprime integers $\{p_i>q_i\geq 1\}_{i=1}^2$. 
    \end{itemize}
\end{theorem}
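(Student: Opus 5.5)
The plan is to run the odd-$n$ three-sphere sweepout of Lemma \ref{Lem: Gn sweepout} in the weighted setting. Since $\mb S^4/S^1_{p,q}$ has the same ellipsoid parametrization \eqref{Eq: parameterize S4/S1pq}, I take the surfaces $\Gamma_{\rho,\epsilon}$, made of $S_\rho\cup S_0\cup S_{-\rho}$ joined by $2n$ necks, and discard the antipodal symmetry. For odd $n$ I keep the necks along $\widehat\Xi_l^{{\rm sgn}((-1)^l)}$ above $E_0$ and along the opposite half-longitudes below. For even $n$, where $\hat\xi_l$-type arcs are no longer exchanged correctly by $s$, I choose the neck arcs so that $r$ preserves the upper and lower families and $s$ swaps them. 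One way is to put the necks above $E_0$ along $\Xi_l^{+}$ and those below along $\Xi_l^{-}$, with the $\hat\xi$-offset as needed. Opening the necks through the $D_n$-equivariant retractions gives a $D_n$-sweepout of $\mb S^4/S^1_{p,q}$ from $E_0/S^1_{p,q}$ to $E_0/S^1_{p,q}\cup\{\text{arcs}\}$. Each slice has genus $2n-2$, meets $\Xi_0^\pm$ at three points each, and separates the poles, with the two sides interchanged by $s$. Lifting by $\pi_{p,q}^{-1}$ gives a $G_n^{p,q}$-sweepout which is an Almgren--Pitts $1$-sweepout. The catenoid estimates in $(\mathring{\mb S}^4_{p,q}/S^1_{p,q},\wti g)$ give $\sup\mc H^3<3\mc H^3(E_0)$. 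Here I need Ricci positivity of the weighted quotient, or I will argue directly via $\Ric_{\mb S^4}>0$ as in Remark \ref{Rem: valid in symmetric S4 with Ric>0 for n-even}.

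Next I apply the equivariant min-max theorem. The width lower bound $\mf L>\mc H^3(E_0)$ comes from the argument of Proposition \ref{Prop: new minimal hypersurface}, where the antipodal class $\alpha_0$ is replaced by the class of $s$-antisymmetric Caccioppoli sets, i.e. $E_0+\bd\Omega$ with $\Omega$ being $G_n^{p,q}$-invariant. Regularity near the non-boundary-type locus $E_0$ combines \cite{wang2026spherical} with \cite{ketover2016free}. Connectedness follows from Frankel. The proof of \cite{li2025RP2}*{Proposition 5.4}, applied with $s$ in place of $A$, shows that the limit $\wti\Gamma_n^{p,q}$ separates the two poles, so $\delta=1$, and that the multiplicity is odd, hence $1$. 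The genus bound of \cite{ketover2016free} then gives $\mk g\le 2n-2$.

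For the topology I use $\mk m_\pm=1+2j_\pm$ from Lemma \ref{Lem: S1pq invariant Seifert manifolds}. The group $\mb Z_n=\langle r\rangle$ acts preserving orientation with fixed set $\Xi_0$, so Riemann--Hurwitz with quotient genus $0$ (inherited from the sweepout as before) gives
\[2-2\mk g=2n-(\mk m_++\mk m_-)(n-1),\qquad\text{i.e.}\qquad \mk g=(j_++j_-)(n-1).\]
The bound $\mk g\le 2n-2$ forces $j_++j_-\le2$, which is exactly the listed six triples.

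The main obstacle is showing $\mk g\ne0$ for large $n$. I follow Part IV of Theorem \ref{Thm: new minimal hypersurfaces of odd n}. Since $\wti\Gamma_n/S^1_{p,q}$ separates the poles, each $sr^{n-l}$ reverses its orientation and fixes points of it, at least at $\Xi_0\cap E_0$. Hence the fixed great circles $\Upsilon_l$ lie in $\wti\Gamma_n/S^1_{p,q}$, so $E_0/S^1_{p,q}$ lies in the support of any limit. White's compactness in a compact region away from the poles and from $\Xi_0^\pm$, where $\wti g$ is nondegenerate, together with the instability of $E_0$ rules out multiplicity $\ge2$. This yields smooth convergence to $E_0$ and hence area tending to $\mc H^3(E_0)$, contradicting the area gap. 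The delicate point is handling the orbifold arcs $\Xi_0^\pm$, where the weight is still positive but the metric is conical; working on $\mb S^4$ directly with $S^1_{p,q}$-invariant varifolds avoids this.

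Given $\mk g\ne0$, consequence (i) follows because the triples $(n-1,\cdot,\cdot)$ and $(2n-2,\cdot,\cdot)$ give $\chi_{\rm orb}<0$ for large $n$ with $\mk e=\pm1/pq$. Remark \ref{Rem: Seifert invariants}(iii) then makes the base genus a topological invariant, and it is unbounded in $n$. For (ii), the unique Seifert structure determines the cone orders $\{p,q\}$ and $\mk e$, hence the pair.
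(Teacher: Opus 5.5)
Your outline follows the paper's proof step for step. It uses the three-sphere, $2n$-neck sweepout lifted by $\pi_{p,q}^{-1}$ and the width bound via the class $\llbracket E_0\rrbracket+\partial\Omega$ with $\hat s$ replacing the antipodal lift. It gets odd multiplicity and pole separation from \cite{li2025RP2}, $\mk m_\pm=1+2j_\pm$ and $\mk g=(j_++j_-)(n-1)$ from Riemann--Hurwitz, then uses the Part IV compactness and uniqueness of the Seifert fibration. Two steps, however, do not go through as you wrote them.

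\emph{The even-$n$ sweepout.} Your concrete choice is upper necks along $\Xi_l^{+}$ and lower necks along $\Xi_l^{-}$. It is not $r$-invariant: $r$ rotates $w$ by $2\pi/n$ and sends $\Xi_{n-1}^{+}$ to $\Xi_1^{-}$. In fact no $D_n$-invariant configuration on the unshifted half-longitudes exists for even $n$:
\begin{itemize}
\item in units of $\pi/n$ they occupy all residues mod $2n$;
\item $r$ adds $2$, so the $r$-orbits are the even residues and the odd residues;
\item $s$ negates the angle of $w$ and flips $x$, so it preserves parity.
\end{itemize}
Hence the lower family $s(\text{upper})$ sits over exactly the same points of $S_0$ as the upper family, and the upper and lower necks collide there. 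Some offset is therefore needed, not optional. The working choice, which is the paper's, puts the upper necks along $\widehat\Xi_{2b}^{+}\cup\widehat\Xi_{2b}^{-}$ and the lower necks along $\widehat\Xi_{2b-1}^{+}\cup\widehat\Xi_{2b-1}^{-}$, $b=1,\dots,k$. In units of $\pi/(2n)$ these are the residues $\equiv 3$ and $\equiv 1 \pmod 4$ (mod $4n$). Each is a single $r$-orbit, and $s$ interchanges them.

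\emph{Regularity at the orbits over $u_\pm:=(0^{(2)},\pm\frac12,0)=(E_0/S^1_{p,q})\cap\Xi_0^{\pm}$.} Every circle $\Upsilon_l$ passes through $u_\pm$, so these points lie where the slices fail to be of locally boundary type. Moreover, the quotient is singular at $u_+$ when $|p|>1$ and at $u_-$ when $|q|>1$. The combination you cite therefore gives regularity of the min-max varifold only off $\pi_{p,q}^{-1}(u_\pm)$:
\begin{itemize}
\item \cite{wang2026spherical} needs the slices to be of locally boundary type;
\item \cite{ketover2016free} needs a smooth quotient $3$-manifold.
\end{itemize}
Without regularity there, neither the genus bound nor the Riemann--Hurwitz count is available.

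The paper supplies the missing argument in two steps. First, it builds equivariant replacements on small $G_n^{p,q}$-annuli centred at $\pi_{p,q}^{-1}(u_\pm)$, choosing $\epsilon$ so that the part of each annulus inside $B_{2\epsilon}(E_0)$ projects away from $\Xi_0$; each piece then falls under one of the two theories. Second, it shows that tangent cones at $\tilde u\in\pi_{p,q}^{-1}(u_\pm)$ are multiples of $T_{\tilde u}(G_n^{p,q}\cdot\tilde u)\times C$, with $C$ a regular minimal cone in the $3$-dimensional normal space. The link of $C$ is a great circle, so $C$ is a plane, and \cite{colding2002minmax} completes the regularity.

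The same two points reappear in your Part IV. A compact region avoiding $\Xi_0^{\pm}$ cannot contain $E_0/S^1_{p,q}$, so the connectedness step (a closed component converging smoothly to $E_0/S^1_{p,q}$) still needs an argument at $u_\pm$. The paper treats this adaptation as routine. Your suggestion of working on $\mb S^4$ directly does not supply the argument, since White's compactness theorem is a statement about surfaces in $3$-manifolds.
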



\begin{proof}
    {\bf Case I}. $n=2k+1\geq 3$ is odd. 
    
    In this case, we can directly use the $D_n$-sweepout $\{\Sigma_t\}_{t\in [0,1]}$ of $\mb S^4/S^1_{p,q}$ as constructed in Lemma \ref{Lem: Gn sweepout}, and the desired minimal $G_n^{p,q}$-hypersurface $\wti\Gamma_n^{p,q}$ can be obtained in essentially the same way as in Proposition \ref{Prop: new minimal hypersurface} by using $S^1\rtimes_\psi \langle\hat{s}\rangle \subset G_n^{p,q}$ in place of ${\rm Pin}_-(2)$. 
    Hence, we only point out the modifications that are required to show the regularity and the topological control of the equivariant min-max $G_n^{p,q}$-varifold $V_n^{p,q}$ as constructed in Proposition \ref{Prop: new minimal hypersurface}. 
    
    Note that $\mb S^4/S^1_{p,q}$ may not be smooth at $\Xi_0$ (cf. \eqref{Eq: axes on S3}), and thus the arguments in Proposition \ref{Prop: new minimal hypersurface} only ensure the regularity of $V_n^{p,q}$ in $\mb S^4\setminus (E_0\cap \pi_{p,q}^{-1}(\Xi_0))$ in general. 
    Nevertheless, since $E_0/S^1_{p,q}$ is transversal to $\Xi_0$ at two (possible singular) points $\{u_i\}_{i=1}^2 = (E_0/S^1_{p,q})\cap \Xi_0$, for any small $G_n^{p,q}$-annulus $An$ centered at $\pi_{p,q}^{-1}(u_i)$, we can set $\epsilon>0$ small enough so that $[(An\cap B_{2\epsilon}(E_0) )/S^1_{p,q}] \cap \Xi_0=\emptyset$. 
    Hence, one can apply the $G_n^{p,q}$-isotopy area minimizing constructions to obtain smooth minimal $G_n^{p,q}$-equivariant replacements of $V_n^{p,q}$ in $An$ with the required compactness property, where the regularity of the $G_n^{p,q}$-replacements in $An\setminus B_\epsilon(E_0)$ and $(An\cap B_{2\epsilon}(E_0) )/S^1_{p,q}$ follows from \cite{wang2026spherical}*{\S 6.3.2} and \cite{ketover2016free}*{\S 7.2} respectively. 
    Combining this with the proof of \cite{wang2026spherical}*{Proposition 6.16}, one further shows that the tangent cone of $V_{n}^{p,q}$ at any $\tilde u_i\in \pi_{p,q}^{-1}(u_i)$ is an integer multiple of $T_{\tilde u_i}(G_n^{p,q}\cdot \tilde u_i) \times C$, where $T_{\tilde u_i}(G_n^{p,q}\cdot \tilde u_i)$ is the tangent space of the orbit $G_n^{p,q}\cdot \tilde  u_i$ at $\tilde u_i$, and $C$ is a regular minimal cone in the normal vector space $N_{\tilde u_i}(G_n^{p,q}\cdot \tilde u_i):=\{v\in T_{\tilde u_i}\mb S^4: v\perp T_{\tilde u_i}(G_n^{p,q}\cdot \tilde u_i)\}$. 
    Since $\codim(G_n^{p,q}\cdot \tilde u_i)=3$, the link of $C$ is a closed geodesic of $\mb S^2_1\subset N_{\tilde u_i}(G_n^{p,q}\cdot \tilde u_i)\cong \mb  R^3$, which implies that $C$ is a plane. 
    Then, the regularity of $V_n^{p,q}$ follows from \cite{colding2002minmax}*{p.29, Step4}.

    Next, combining Lemma \ref{Lem: S1pq invariant Seifert manifolds}(ii)(iii) with the proof of \eqref{Eq: intersection with C0} and \eqref{Eq: R-H formula of intersection 2+4j}, 
    we see that $\mk m_\pm = 1+2j_{\pm}$ and $ {\mk g}=(j_++j_-)(n-1)$, which shows the desired topological data.

     {\bf Case II}. $n=2k\geq 2$ is even. 

     For any $\rho\in (0,1)$ and small $\epsilon>0$, we can construct a $D_n$-surface $\widehat\Gamma_{\rho,\epsilon}\subset \mb S^4/S^1_{p,q}$ by connecting three spheres $S_\rho\cup  S_0\cup S_{-\rho}$ 
     via $2n$ necks (Figure \ref{Fig: sweepout n-odd}) centered at each $\{\widehat\Xi_l^{\pm}\}_{l=1}^n$ respectively:
     \begin{align*}
         \widehat C_{\rho,\epsilon}^+ &:= \{(w,t,x)\in \cup_{b=1}^k \bd B_\epsilon(\widehat\Xi_{2b}) = \cup_{b=1}^k\cup_{\kappa\in\{\pm\}} \bd B_\epsilon(\widehat\Xi_{2b}^\kappa) : 0<x<\rho \}, \\
        \widehat C_{\rho,\epsilon}^-&:= \{(w,t,x)\in \cup_{b=1}^k \bd B_\epsilon(\widehat\Xi_{2b-1})= \cup_{b=1}^k\cup_{\kappa\in\{\pm\}} \bd B_\epsilon(\widehat\Xi_{2b-1}^\kappa): -\rho <x<0 \}. 
     \end{align*} 
     Using $\widehat\Gamma_{\rho,\epsilon}$ in place of $\Gamma_{\rho,\epsilon}$, the constructions in Lemma \ref{Lem: Gn sweepout} can be easily adapted to construct a $D_n$-equivariant sweepout $\{\Sigma_t\}_{t\in [0,1]}$ in $\mb S^4/S^1_{p,q}$ with similar properties. 
     Combined with the proof of Proposition \ref{Prop: new minimal hypersurface} and the modifications in {\bf Case I}, we obtain the desired minimal $G_n^{p,q}$-hypersurface. 

    \medskip
     Finally, the proof of Theorem \ref{Thm: new minimal hypersurfaces of odd n} can be easily adapted to show the existence of the desired $\wti N_{p,q}\in \mb N$. 
     Moreover, one readily verifies \eqref{Eq: unique seifert fibration} for $n\geq \wti N_{p,q}$ by \eqref{Eq: orbifold Euler charateristic}. 
     Therefore, (i) and (ii) follow from Lemma \ref{Lem: S1pq invariant Seifert manifolds} and Remark \ref{Rem: Seifert invariants}(iii). 
\end{proof}

\begin{theorem}\label{Thm: minimal Seifert manifold in S4 - Lawson type}
    Given $n\in \{2,3,\dots\}$ and coprime $p,q\in \mb Z\setminus\{0\}$, there exists an embedded minimal $G_n^{p,q}$-hypersurface $\Gamma_n^{p,q}\subset \mb S^4$ that is topologically a Seifert fibered $3$-manifold satisfying
    \[ (\mk g, \mk m_+,\mk m_-)\in \{(0,1,1), (n-1,3,1), (n-1,1,3)\}, \]
    where $\mk g:=\genus(\Gamma_n^{p,q}/S^1_{p,q})$ and $\mk m_\pm:= \#((\Gamma_n^{p,q}/S^1_{p,q})\cap \Xi_0^\pm)$ (cf. \eqref{Eq: half axis z on S3}). 
    Moreover, there exists $N_{p,q}\geq 1$ (depending on $p,q$) large enough so that $\mk g=n-1$ for all $n\geq N_{p,q}$, which implies
    \begin{itemize}
        \item[(i)] ${\mc M}_{p,q} := \{\Gamma_n^{p,q}\subset \mb S^4: n\geq  N_{p,q}\}$ contains infinitely many embedded minimal hypersurfaces in $\mb S^4$ of distinct topological types;
        \item[(ii)] ${\mc M}_{p_1,q_1}\cap {\mc M}_{p_2,q_2}=\emptyset$ for any two distinct pairs of coprime integers $\{p_i>q_i\geq 1\}_{i=1}^2$. 
    \end{itemize}
\end{theorem}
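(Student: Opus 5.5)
The plan is to build a Lawson-type $D_n$-sweepout of $\mb S^4/S^1_{p,q}$ by desingularizing two orthogonally intersecting spheres, in the spirit of Lawson's $\xi_{1,n-1}$ surfaces and of \cite{buzano2025self-shrinker}. The two spheres are the equator $E_0/S^1_{p,q}=S_0$ and a great sphere containing $\Xi_0$, for instance $\{{\rm Im}(w)=0\}$ rotated suitably. The intersection is a great circle meeting $\Xi_0$, and it is chosen invariant under the modified reflectional $D_n$-action \eqref{Eq: Dn reflectional action on S4/S1pq}. The desingularized surfaces $\Sigma_t$ will have genus $n-1$, with the $n$ handles or saddles arranged by $r$ and $s$.

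By construction $\Sigma_t$ will meet $\Xi_0^\pm$ in an odd number of points, namely $(\mk m_+,\mk m_-)=(3,1)$ or $(1,3)$, and it will separate the two poles. The two complementary regions should be interchanged by $s$, since $s$ swaps the poles. The sweepout is completed by the standard opening-up deformations used in Lemma \ref{Lem: Gn sweepout}. At the two ends the surfaces degenerate to $1$-dimensional sets, such as the fixed circles $\Upsilon_l$ and arcs. This gives an Almgren-Pitts $1$-sweepout with $\sup_t\mc H^3<2\mc H^3(E_0)$ after equivariant catenoid/Scherk-type estimates in the weighted metric $\wti g_{\mb S^4/S^1_{p,q}}$, whose Ricci curvature is positive away from the singular arcs.

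Next I apply the equivariant min-max theory \cite{wang2026spherical}*{Theorem 1.5}. The regularity near $E_0\cap\pi_{p,q}^{-1}(\Xi_0)$ is handled exactly as in Case I of the proof of Theorem \ref{Thm: minimal Seifert manifold in S4 - different sides}, using replacements and tangent cones in codimension-$3$ orbits. This yields a connected embedded minimal $\Gamma_n^{p,q}$. Multiplicity one follows from the width bound together with the argument of \cite{li2025RP2}*{Proposition 5.4} applied with $S^1\rtimes\langle\hat s\rangle$, and that argument also shows that $\Gamma_n^{p,q}/S^1_{p,q}$ separates the poles ($\delta=1$).

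By Lemma \ref{Lem: S1pq invariant Seifert manifolds}, this gives $\mk m_\pm=1+2j_\pm$. The Riemann-Hurwitz argument for the $\mb Z_n=\langle r\rangle$ cover, branched at the $\mk m_++\mk m_-$ points of $\Xi_0$, gives $\mk g=n\gamma+(j_++j_-)(n-1)$. The genus-preservation of the quotient from \cite{ketover2016equivariant}*{(1.3)} forces $\gamma=0$. The genus bound $\mk g\le n-1$ then gives $j_++j_-\le1$, which yields the three listed triples.

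For large $n$, I follow Part IV of the proof of Theorem \ref{Thm: new minimal hypersurfaces of odd n}. If $\mk g=0$ along $n_i\to\infty$, then since $sr^{n_i-l}$ reverses orientation on the surface, every $\Upsilon_l^{n_i}$ lies in $\Gamma_{n_i}/S^1_{p,q}$. By White's compactness theorem together with Ricci positivity (which excludes stable multiplicity), the sequence converges smoothly with multiplicity one to $E_0/S^1_{p,q}$, contradicting the area gap $\mf L>\mc H^3(E_0)$. The width lower bound comes from the minimization argument in Proposition \ref{Prop: new minimal hypersurface} using the class of sets interchanged by $\hat s$.

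Items (i) and (ii) then follow from \eqref{Eq: orbifold Euler charateristic}, since $\chi_{\rm orb}<0$ for $n$ large, together with Remark \ref{Rem: Seifert invariants}(iii). The cone orders $|p|,|q|$ and the Euler number $\mk e=\pm1/pq$ distinguish distinct pairs.

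The main obstacle is the sweepout itself. It must be $D_n$-invariant for the modified action, where $s$ also flips $x$. Its area must stay below the threshold, and its ends must be genuinely $1$-dimensional so that it is a nontrivial sweepout. I would first try the intersecting pair $S_0$ and $\{t=0\}$, checking the invariance of the saddle positions under $s$ directly.
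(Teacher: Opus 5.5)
Your argument after the sweepout follows the paper. That includes regularity near $E_0\cap\pi_{p,q}^{-1}(\Xi_0)$ as in Case I of Theorem \ref{Thm: minimal Seifert manifold in S4 - different sides}, and the proof of Proposition \ref{Prop: new minimal hypersurface} with ${\rm Pin}_-(2)$ replaced by $S^1\rtimes_\psi\langle\hat s\rangle$. It also includes $\mk m_\pm=1+2j_\pm$, Riemann--Hurwitz with $\gamma=0$, Part IV for large $n$, and Remark \ref{Rem: Seifert invariants}(iii).

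The gap is the sweepout, as you flag yourself, and the concrete choices you suggest fail. A great sphere containing $\Xi_0$ (e.g. $\{{\rm Re}(e^{i\phi}w)=0\}$) contains both poles, which are the fixed points of $S^1_{p,q}$, and it contains whole arcs of $\Xi_0^\pm$. So $\mk m_\pm$ is not even finite, and the lift is not a smooth hypersurface. Moreover, for $n\geq 3$ the only $r$-invariant great circle of $S_0$ is $\{x=0,t=0\}$, which does not meet $\Xi_0$. Hence a $D_n$-invariant intersection circle through $\Xi_0$ does not exist. Your fallback $\{t=0\}$ is $D_n$-invariant but passes through both poles.

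More fundamentally, desingularizing one fixed pair of spheres gives one surface, not a one-parameter family. The missing idea is to keep $E_0/S^1_{p,q}$ fixed and desingularize it against the \emph{moving} parallel spheres $\{t=-\cos(\pi\tau)\}$, $\tau\in[0,1]$, after doubling $(w,t)$. Under the stereographic projection $\wti P$ from $(0^{(2)},1,0)$:
\begin{itemize}
\item these spheres become the concentric spheres $\mb S^2_{\tan(\pi\tau/2)}$ and $E_0/S^1_{p,q}$ becomes a plane;
\item the modified reflectional $D_n$-action becomes the rotational one, so Lemma 2.2 of \cite{buzano2025self-shrinker} applies.
\end{itemize}
The moving sphere shrinks to a point at $\tau=0,1$, so $\Sigma_0=\Sigma_1=E_0/S^1_{p,q}$. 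It hits the poles only at $\tau=1/2$, where $(\mk m_+,\mk m_-)$ switches between $(1,3)$ and $(3,1)$. Nontriviality as an Almgren--Pitts $1$-sweepout comes from Lemma 2.4 of \cite{buzano2025self-shrinker}.

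Two further claims in your sketch are wrong.

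First, the ends cannot be $1$-dimensional. The isometry $\hat s$ swaps the two sides of every interior slice, so each side has exactly half the volume of $\mb S^4$. Continuity of the Caccioppoli sets $\wti\Omega_t$ then forces $\wti\Omega_0$ to have half volume, so $\bd\wti\Omega_0=\llbracket\wti\Sigma_0\rrbracket\neq 0$. This is why the ends are $E_0$ here, and also in Lemma \ref{Lem: Gn sweepout}, which you misquote.

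Second, $\sup_t\mc H^3<2\mc H^3(E_0)$ is false for Lawson-type slices. Near $\tau=1/2$, or for your pair $S_0,\{t=0\}$, the slice is close to $E_0\cup N_{1/2}$. Here $N_{1/2}=\{|z_1|=|z_2|\}$ is the suspended Clifford torus of area $\pi^3$, so the slice area is about $2\pi^2+\pi^3>4\pi^2$. The paper only proves $\mc H^3(\wti\Sigma_\tau)\leq 2\pi^2+\pi^3+\frac{1}{100}<6\pi^2=3\mc H^3(E_0)$. This suffices because the $\hat s$-swapping argument of \cite{li2025RP2} gives odd multiplicity, and $\mc H^3(\Gamma_n^{p,q})\geq\mc H^3(E_0)$. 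No catenoid estimates (or Ricci positivity of the weighted $p,q$ metric) are needed.
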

	
\begin{proof}
    Firstly, we reparameterize $\mb S^4/S^1_{p,q}$ as the unit sphere $\mb S^3$ by multiplying the coordinates $(w,t)$ by $2$ in \eqref{Eq: parameterize S4/S1pq}. 
    Then, under the stereographic projection $\wti P$ at $(0^{(2)},1,0)$ given by 
    \[\wti P: (w,t,x)\in \mb S^3\setminus\{(0^{(2)},1,0)\} \mapsto \left(\frac{{\rm Re}(w)}{1-t}, \frac{{\rm Im}(w)}{1-t}, \frac{x}{1-t} \right)\in \mb R^3,\]
    the modified reflectional $D_n$-action \eqref{Eq: Dn reflectional action on S4/S1pq} on $\mb S^4/S^1_{p,q}$ induces the standard rotational $D_n$-action on $\mb R^3$ (cf. \cite{ketover2016free}\cite{carlotto2022free}) generated by $\tilde r:=\wti P\circ r\circ \wti P^{-1}$ and $\tilde s:=\wti P\circ s\circ \wti P^{-1}$ (as explained after \eqref{Eq: s action using stereographic projection}). 
    Note that, using the projection $\wti P$, the singular north and south poles are mapped to $(0,0,\pm 1 )\in \mb R^3$ respectively, the (possible singular) arcs $\Xi_0^\pm$ are mapped to the two portions of the vertical $z$-axis lying outside/inside $\mb B^3_1$ respectively, and the circles $ \{\Upsilon_l\}_{l=1}^n$ (Lemma \ref{Lem: fixed points set of s}) are mapped to the axes of the $\pi$-rotations $\{\tilde s \tilde r^{n-l}\}_{l=1}^n$ in the horizontal plane. 
    Hence, following the constructions in \cite{buzano2025self-shrinker}*{Lemma 2.2}, one can desingularize the union of the plane $ \wti P(\{(w,t,0)\in \mb S^3\})$ and the round sphere $\mb S^2_{\tan(\pi \tau/2)}$ ($\tau\in (0,1)$) in $\mb R^3$ by a sufficiently small $D_n$-equivariant perturbation, which resembles the Lawson $\xi_{1,n-1}$ desingularization picture. 
    This produces a $D_n$-equivariant sweepout $\{\Sigma_\tau'\}_{\tau\in [0,1]}$ in $\mb R^3$ so that 
    \begin{itemize}
        \item $\Sigma_\tau:=\closure (\wti P^{-1}(\Sigma_\tau'))$ has genus $n-1$ for every $\tau\in (0,1)$, and $\Sigma_0=\Sigma_1=E_0/S^1_{p,q}$;
        \item the singular north and south poles $\{(0^{(2)}, 0,\pm 1)\}\subset \mb S^4/S^1_{p,q}$ are contained only in $\Sigma_{1/2}$, and each $\wti \Sigma_\tau:=\pi_{p,q}^{-1}(\Sigma_\tau)$ is a smoothly embedded $G_n^{p,q}$-hypersurface for $\tau\neq 1/2$;
        \item $\Sigma_\tau$ is a $D_n$-equivariant desingularization of $E_0/S^1_{p,q} \cup \{(w,t,x)\in \mb S^3:t=-\cos(\pi \tau)\}$ containing $\Upsilon_1,\dots,\Upsilon_n$ defined in \eqref{Eq: circle on E0/S1} for every $\tau\in [0,1]$;
        \item for any $\tau\in [0,1]$ and $N_\tau := \pi_{p,q}^{-1}(\{(w,t,x)\in \mb S^3:t=-\cos(\pi \tau)\}) \subset \mb S^4$, 
        \[\mc H^3_{g_{\mb S^4}}(\wti\Sigma_\tau)\leq \mc H^3_{g_{\mb S^4}}(E_0) + \mc H^3_{g_{\mb S^4}}(N_\tau) + \frac{1}{100} 
        \leq 
        2\pi^2+\pi^3+ \frac{1}{100} < 6\pi^2 = 3 \mc H^3_{g_{\mb S^4}}(E_0),\] 
        where $2\pi^2$ is the area of $E_0=\pi_{p,q}^{-1}(\{(w,t,0)\})$, $\pi^3$ is the area of the suspension $N_{1/2}=\{(z_1,z_2,x)\in \mb S^4: |z_1|=|z_2|\}$ of the Clifford torus, and $N_\tau = \{(z_1,z_2,x)\in\mb S^4 : |z_1|^2-|z_2|^2=-\cos(\pi\tau)\sqrt{1-x^2}\}$ has an area of at most $\pi^3$; 
        \item $\{\wti \Sigma_\tau\}_{\tau\in [0,1]}$ induces a $1$-sweepout of $\mb S^4$ in the sense of Almgren-Pitts (by \cite{buzano2025self-shrinker}*{Lemma 2.4}).
    \end{itemize}
    By the proof of \cite{buzano2025self-shrinker}*{Lemma 2.4(iv)}, $\hat s\in G_n^{p,q}$ exchanges the two $S^1_{p,q}$-invariant components of $\mb S^4\setminus\wti\Sigma_\tau$ for any $\tau\in [0,1]$. 
    Hence, combining the equivariant min-max theory \cite{wang2026spherical}*{Theorem 1.5} with the modification for regularity (cf. Theorem \ref{Thm: minimal Seifert manifold in S4 - different sides}), we can apply the proof of Proposition \ref{Prop: new minimal hypersurface} with the ${\rm Pin}_-(2)$-action replaced by the action of $S^1\rtimes_\psi \langle\hat{s}\rangle \subset G_n^{p,q}$. 
    This shows the existence of  
    a minimal $G_n^{p,q}$-hypersurface $\Gamma_n^{p,q}\subset \mb S^4$ with multiplicity one so that $\mc H^3_{g_{\mb S^4}}(\Gamma_n^{p,q}) > \mc H^3_{g_{\mb S^4}}(\mb S^3)$ and $\genus(\Gamma_n^{p,q}/S^1_{p,q})\leq n-1$. 
    Additionally, one easily modifies the proof of \eqref{Eq: intersection with C0} and \eqref{Eq: R-H formula of intersection 2+4j} to show that $\mk m_\pm=1+2j_\pm$ and $\mk g=(j_++j_-)(n-1)$, which verifies the topological data. 
    Finally, the compactness analysis in Theorem \ref{Thm: new minimal hypersurfaces of odd n} {\bf Part IV} can be easily adapted to show the last statement, and thus (i) and (ii) follow from Lemma \ref{Lem: S1pq invariant Seifert manifolds} and Remark \ref{Rem: Seifert invariants}. 
\end{proof}

\bibliographystyle{abbrv}

\bibliography{reference.bib}   
\end{document}